\definecolor{dgreen}{rgb}{0.0, 0.5, 0.0}
\definecolor{byzantium}{rgb}{0.44, 0.16, 0.39}
\newtheorem{prop}{Proposition}
\newtheorem{theorem}{Theorem}
\newtheorem{maintheorem}{Theorem}
\newtheorem{hyp}{Hypothesis}
\newtheorem{lemma}{Lemma}
\newtheorem{rem}{Remark}[section]
\newtheorem{definition}{Definition}
\newtheorem{example}{Example}
\def\N {\mathbb{N}}
\def\R {\mathbb{R}}
\newcommand{\Lphi}{L_\phi}
\newcommand{\sumi}{\sum_{i=1}^N}
\newcommand{\sumj}{\sum_{j=1}^N}
\newcommand{\Lip}{\mathrm{Lip}}
\newcommand{\elts}{\{1,\cdots,N\}}
\newcommand{\se}{s_*}
\renewcommand{\P}{\mathcal{P}}
\newcommand{\PR}{\P(\R^d)}
\newcommand{\tx}{\tilde{x}}
\newcommand{\tm}{\tilde{m}}
\newcommand{\txn}{x_N}
\newcommand{\tmn}{m_N}
\newcommand{\one}{\mathbbm{1}}
\newcommand{\schema}[1]{\b{\sc #1}}
\DeclarePairedDelimiter\floor{\lfloor}{\rfloor}
\newcommand{\Pdn}{P_\mathrm{d}^N}
\newcommand{\Pcn}{P_\mathrm{c}^N}
\newcommand{\xin}{\xi_N}
\newcommand{\zn}{\zeta_N}
\newcommand{\ba}{\phi}
\newcommand{\ess}{\mathrm{ess}}
\newcommand{\bzn}{\bar{\zeta}_N^\epsilon}
\newcommand{\bxin}{\bar{\xi}_N^\epsilon}
\DeclareMathOperator*{\esssup}{ess\,sup}
\newcommand{\CLu}{\mathcal{C}^1([0,T];L^\infty(I;\R^d \times \R))}
\newcommand{\CLp}{\mathcal{C}^1([0,T];L^\infty(I;\R^d \times \Rp))}
\newcommand{\CLx}{\mathcal{C}([0,T];L^\infty(I;\R^d))}
\newcommand{\CLm}{\mathcal{C}([0,T];L^\infty(I;\R))}
\newcommand{\CLmm}{\mathcal{C}^1([0,T];L^2(I;\R))}
\newcommand{\Mx}{M_{x_0}}
\newcommand{\Mm}{M_{m_0}}
\newcommand{\Kx}{K_{x_0}}
\newcommand{\Km}{K_{m_0}}
\newcommand{\tT}{\tilde{T}}
\newcommand{\Rp}{\R^{*+}}
\newcommand{\bS}{\bar{S}}
\begin{document}

\title{Mean-field and graph limits for collective dynamics models with time-varying weights}

\author{
Nathalie Ayi\thanks{Sorbonne Universit\'e, Universit\'e Paris-Diderot SPC, CNRS, Laboratoire Jacques-Louis Lions, Paris, France} 
 \and
Nastassia Pouradier Duteil\thanks{Sorbonne Universit\'e, Inria, Universit\'e Paris-Diderot SPC, CNRS, Laboratoire Jacques-Louis Lions, Paris, France} }

\maketitle

\bibliographystyle{abbrv}


\abstract{In this paper, we study a model for opinion dynamics where the influence weights of agents evolve in time via an equation which is coupled with the opinions' evolution. We explore the natural question of the large population limit with two approaches: the now classical mean-field limit and the more recent graph limit. After establishing the existence and uniqueness of solutions to the models that we will consider, we provide a rigorous mathematical justification for taking the graph limit in a general context. Then, establishing the key notion of \textit{indistinguishability}, which is a necessary framework to consider the mean-field limit, we prove the subordination of the mean-field limit to the graph one in that context. This actually provides an alternative (but weaker) proof for the mean-field limit. We conclude by showing some numerical simulations to illustrate our results.}

\section{Introduction}

Over the past few years, there has been a soaring interest for the study of multi-agent collective behavior models. 
Indeed, they can be applied in many different areas: biology with the study of collective flocks and swarms \cite{MR2310046,MR2205679,MR2064375,1582249}, aviation \cite{MR1617559}, and social dynamics \cite{HK} (among many others). 
In this article, we will take a particular interest in models for opinion dynamics. 
The first ones were introduced in the 50's \cite{social,social2} and already stressed the highly non-linear nature of social interactions. Some early models for consensus formation have been introduced by DeGroot \cite{DG} and more recently by Hegselmann and Krause \cite{HF,HK,MR1792007}. Since then, a wealth of models have been developed in order to study one of the key features of collective dynamics: the intriguing emergence of global patterns from local interaction rules. This phenomenon is referred to as \emph{self-organization} \cite{ACMPPRT17,MR3714980,MR2343706,JabinMotsch14,MR2064375,1582249}.

Most social dynamics models have a common structure: the dynamics of the agents' opinions are described by a system of ODEs in which the agents interact pairwise, i.e.
\begin{equation}\label{eq:HKintro}
\displaystyle \frac{d}{dt} x_i(t) = \frac{1}{N} \sumj a_{ij}\,(x_j(t)-x_i(t)),
\end{equation}
where $x_i$ represents the time-evolving opinion of agent $i$.
Depending on the nature of the interaction coefficients $a_{ij}$, these models can be roughly classified in two categories.
In the first one, interactions are pre-determined by a given interaction network, which represents the inherent structure of the population's interactions \cite{olfati2007,WS98}. Then each pairwise interaction coefficient $a_{ij}$ is non-zero if and only if the edge $(i,j)$ is part of the underlying graph of interactions. 
The second approach only considers the state space of opinions and defines the interaction coefficients as a function of the pairwise distances: $a_{ij}:=a(\|x_i-x_j\|)$. In this case, every agent can potentially interact with any other if the distance separating them belongs to the support of the interaction function $a$: there is no underlying network (see for instance \cite{MT14}).\\

Recently, a variant of this model has introduced the concept of \emph{weights of influence} \cite{McQuadePiccoliPouradierDuteil19,MR3965293}. 
In this augmented model, each agent is not only defined by its opinion $x_i$, but also by its weight $m_i$.
Then, the influence of an agent $j$ on an agent $i$'s opinion is proportional to its weight $m_j$. These weights are assumed to evolve in time via an equation which is coupled with the opinions' evolution. In other words, the evolution of each agent's opinion does not only depend on its proximity with another agent, but also on the charisma or popularity of the latter - and this charisma also evolves in time. This can be formulated as a system of $2N$ ODEs, as follows:
\begin{equation}\label{eq:syst-gen-intro}
\displaystyle \frac{d}{dt} x_i(t) = \frac{1}{N} \sumj m_j(t)\,a(\|x_i-x_j\|)(x_j(t)-x_i(t)), \qquad \frac{d}{dt} m_i(t) = \psi_i(x(t),m(t)).
\end{equation}

Interestingly, although the interaction coefficients are given by a function of the pairwise distances between opinions, in this approach we can again view the opinions as nodes of an underlying network. The corresponding weighted graph is \emph{non-symmetric}
(the edge between $i$ and $j$ being weighted by $m_i$ in one direction and by $m_j$ in the other), and \emph{time-evolving} (the weights' dynamics being coupled with the dynamics of the nodes). \\

As in all models of collective dynamics, several natural questions arise. 
A first one concerns the \emph{large time limit}, that is the asymptotic behavior of the system. 
Many works in the literature have delved into the question of self-organization, i.e. the spontaneous emergence of well-organized group patterns such as consensus, alignment, clustering  or dancing equilibrium \cite{MR3714980,MR2343706,MR3392625}. This was studied  for the augmented model with time-varying weights in \cite{McQuadePiccoliPouradierDuteil19}.\\

In this paper, we will explore another natural question, the \emph{large population limit}.
When the number of agents tends to infinity, the previous system of $2N$ equations becomes unmanageable, a problem well-known as the \emph{curse of dimension}.
A common answer to this issue consists of studying the \emph{mean-field limit} of the system.

First introduced in the context of gas dynamics (see \cite{braun1977} for instance), when describing particles interacting via a force, a mean-field limit is a limit in which the number of particles $N$ is large ($N$ goes to infinity) but is such that the interaction between particles is both weak enough so that the forces applying on one particle remain finite at the limit, and strong enough so that all the particles continue to interact. 
The mean-field limit process consists of representing the population by its density probability, instead of following each agent's individual trajectory. 
In the case of the classical opinion dynamics \eqref{eq:HKintro}, the limit measure $\mu(t,x)$ represents the density of agents with opinion $x$ at time $t$.
The mean-field limit of this system is now a classical result, 
and one can show that the limit measure satisfies a non-local transport equation \cite{Dobrushin79}.

In the context of the augmented system with time-varying weights \eqref{eq:syst-gen-intro}, the limit measure $\mu(t,x)$ represents the total weight of the agents with opinion $x$ at time $t$. It was shown in \cite{PouradierDuteil21} that it solves a non-local transport equation with non-local source. \\

However, there is a limitation to the mean-field approach. 
Since it describes the population by its density, it requires all particles to be \emph{indistinguishable}. This not only entails a significant information loss, but also greatly reduces the span of models that can be studied. It is also incompatible with the graph viewpoint.
In particular, in the case of the augmented model \eqref{eq:syst-gen-intro} with time-varying weights, it requires strong assumptions on the mass dynamics $\psi_i$.
This leads us to the other approach that will be central to this paper: the graph limit method. \\

In 2014, Medvedev used techniques from the recent theory of graph limit \cite{MR2455626,MR2277152,MR2274085,LS,MR3012035} to derive rigorously the continuum limit of dynamical models on deterministic graphs \cite{Medvedev14}. 
In the present paper, we extend this idea to our collective dynamics model with time-varying weights, adopting the graph point of view described above. 
The central point of this approach consists of describing the infinite population by two functions $x(s)$ and $m(s)$ over the space of continuous indices $s$.
The discrete system of ODEs is then shown to converge as $N$ goes to infinity to a system of two non-local diffusive equations in the space of continuous indices. We show that this approach is more general than the mean-field one, and the Graph Limit can be derived for a much greater variety of models. \\

In the case of dynamics preserving the indistinguishability of particles, we show that both the graph limit and the mean-field limit can be derived. 
In particular, we show that there is a hierarchy between the two limit equations: the mean-field limit equation can be derived from the graph limit one.
This subordination of the mean-field limit equation to the graph limit equation, pointed out in \cite{BiccariKoZuazua19}, is natural:
Indeed, the mean-field limit process eliminates all individuality from the particles by considering only the population density. 
Thus, there would be no hope of recovering the graph limit equation from the mean-field one. \\

The paper is organized as follows: in Section \ref{sec:presentation}, we present the model and state the main results. In Section \ref{sec:graphlim}, we focus on the graph limit. We start by establishing the existence and uniqueness of a solution to the graph limit equation,
and then prove the convergence of the discrete system to the graph limit equation. 
In Section \ref{sec:mfl}, we study the mean-field limit. We first define the key notion of indistinguishability for a particle system. We then prove the subordination of the mean-field limit to the graph one and finish by a weaker but alternative proof of the mean-field limit based on that subordination. 
Lastly, we present some numerical simulations in Section \ref{sec:numeric} with concrete models to illustrate our results.

\section{Presentation of the model and main results}
\label{sec:presentation}

We study a social dynamics model with time-varying weights introduced in \cite{McQuadePiccoliPouradierDuteil19}.
From here onward, $d\in\N$ will represent the dimension of the space of opinions, and $N\in\N$ will represent the number of agents whose opinions evolve in $\R^d$.

More specifically, let $x^N=(x_i^{N})_{i\in\elts}:[0,T]\rightarrow(\R^d)^N$ represent the \emph{opinions} (or positions) of $N$ agents, and let $m^N=(m_i^{N})_{i\in\elts}:[0,T]\rightarrow \R^N$ represent their individual \emph{weights of influence}. Each opinion's time-evolution is affected by the opinion of each neighboring agent via the interaction function $\phi\in\Lip(\R^d; \R)$, proportionally to the neighboring agent's weight of influence. In turn, the agents' weights are assumed to evolve in time and their dynamics may depend on the opinions and weights of all the other agents, via functions $\psi_i^{(N)}:(\R^d)^N\times\R^N\rightarrow\R$.
Given a set $(x_i^{0,N})_{i\in\elts}$ of initial opinions and $(m_i^{0,N})_{i\in\elts}$ of initial weights, the evolution of the opinions and weights are given by the following system:

\begin{equation}\label{eq:syst-gen}
\begin{cases}
\displaystyle \frac{d}{dt} x_i^{N}(t) = \frac{1}{N} \sumj m_j^N(t)\, \phi(x_j^{N}(t)-x_i^{N}(t))\\
\displaystyle  \frac{d}{dt} m_i^{N}(t) = \psi_i^{(N)}(x^{N}(t),m^{N}(t)), \qquad i\in\elts,
\end{cases}
\end{equation}
supplemented by the initial conditions 
\begin{equation*}
\forall i\in\elts, \quad x_i^{N}(0) = x_i^{0,N} \quad \text{ and } \quad m_i^{N}(0) = m_i^{0,N} 
\end{equation*}
such that 
\begin{equation}
\label{eq:sum_M}
\sumi m_i^{0,N}=N.
\end{equation}

We point out that the choice $m_i^{0,N}=1$ and $\psi_i^{(N)}\equiv 0$ for all $i\in\elts$  brings us back to the classical Hegselmann-Krause model for opinion dynamics \cite{HK}:
\begin{equation}\label{eq:HK}
\displaystyle \frac{d}{dt} x_i^{N}(t) = \frac{1}{N} \sumj \phi(x_j^{N}(t)-x_i^{N}(t)) \qquad i\in\elts.
\end{equation}
This model has been thoroughly studied in the literature (see \cite{ACMPPRT17} for a (non-exhaustive) review) and provides a well-known example of emergence of global patterns, such as convergence to consensus or clustering, from local interaction rules. The augmented model with time-varying weights \eqref{eq:syst-gen} was also shown to exhibit richer types of long-term behavior, such as the emergence of a single (or several) leader(s) \cite{McQuadePiccoliPouradierDuteil19}. 
\begin{rem}
In the literature, the interaction function can be found of the form $\phi(x):=a(\|x\|)x$ for some continuous function $a\in C(\R^+;\R)$ (see \cite{BiccariKoZuazua19,HK}).
In other works, the interaction between agents takes the form of the gradient of an interaction potential $W:\R\rightarrow\R$, i.e. $\phi(x):=\nabla W(\|x\|)$ (see for instance \cite{CarrilloChoiHauray14}).
Here, we will keep the general notation $\phi$, also used in \cite{JabinMotsch14}, which can cover these various cases.
\end{rem}

From here onward, we will make the following assumptions on the interaction function: 
\begin{hyp}\label{hyp:phi}
The interaction function $\phi$ satisfies $\phi(0)=0$ and  $\phi\in\Lip(\R^d; \R)$, with $\|\phi\|_\Lip=L_\phi$.
\end{hyp}


Notice that at this stage, we have not made any assumptions on the interaction functions $\psi_i$. Actually, unlike the position dynamics, the weight dynamics are allowed to differ for each agent $i$. In this paper, the dependence of $\psi_i$ on the opinions $x^N$ and the weights $m^N$ will take two main forms, that will be specified in the subsequent sections.

The aim of this work is to derive the continuum limit of these dynamics, that is when the number of agents goes to infinity.
We will show that using the graph limit method, we obtain the following limit equation (that we will refer to as the graph limit equation):
\begin{equation}
\left\{\begin{array}{l}
\displaystyle \partial_t x(t,s) = \int_I m(t,s_*)\phi(x(t,\se) - x(t,s)) d\se \\
\partial_t m(t,s) = \psi(s,x(t,\cdot),m(t,\cdot)),
\end{array}\right.
\label{eq:GraphLimit-gen}
\end{equation} 
where $x\in C([0,T];L^\infty(\R^d))$ and $m\in C([0,T];L^\infty(\R))$ are {associated with} the respective continuum limits of $x^N$ and $m^N$.
Here, $s$ represents the continuous index variable taking values in $I:=[0,1]$, as introduced in \cite{Medvedev14} and \cite{BiccariKoZuazua19}, and $\psi:I\times C([0,T];L^\infty(\R^d))\times C([0,T];L^\infty(\R))\rightarrow\R$ will have to be specified. Notice that the dependence of $\psi_i^{(N)}$ on the index $i$ in the microscopic dynamics \eqref{eq:syst-gen} is translated by the dependence of $\psi$ on the continuous variable $s$ in the limit \eqref{eq:GraphLimit-gen}. Similarly, the dependence of $\psi_i$ on all agents' opinions $x^N(t)$ and weights $m^N(t)$ is encoded by the non-local dependence of $\psi$ on the functions $x(t,\cdot)$ and $m(t,\cdot)$.

\begin{example}\label{ex}
A simple example of mass dynamics depending non-locally on the opinions and weights can be given by functions of the form:
\begin{equation}
\label{eq:psi-part}
\displaystyle \psi(s,x(t,\cdot),m(t,\cdot))= m(t,s) \int_I    m(t,\tilde{s}) S(x(t,s),x(t,\tilde{s})) d\tilde{s} 
\end{equation}
where $S:\R^d\times\R^d\rightarrow\R$. Note that in this example, $\psi$  depends on the continuous index $s$ only through $x$ and $m$.
More specific examples of mass dynamics $\psi$ will be presented in Section \ref{sec:numeric}. The choice \eqref{eq:modelsimuGL} of Section \ref{sec:numericindisting} provides another example of mass dynamics depending only on the opinions and weights, and not on the individual indices. The choice \eqref{eq:modelsimuGL2} of Section \ref{sec:numericnotindisting} provides an example of mass dynamics depending explicitly on  $s$.
\end{example}

In order to give a meaning to the limit, 
we will reformulate the discrete system \eqref{eq:syst-gen} in a continuous way, using two operators $\Pcn$ and $\Pdn$ respectively transforming vectors into piecewise-constant functions and $L^\infty$ functions into $N$-dimensional vectors.
From here onward, subscripts {in ($x_N$,$m_N$)} will indicate functions over the continuous space $I$  while superscripts  {in ($x^N$) , ($m^N$)} will indicate vectors of $(\R^d)^N$ or $\R^N$.
 
Given initial conditions $x_0\in L^\infty(I;\R^d)$ and $m_0\in L^\infty(I;\R)$ for the continuous dynamics \eqref{eq:GraphLimit-gen},
satisfying 
\begin{equation}
\label{eq:integral_egal_a_1}
\int_I m_0(s) ds =1,
\end{equation}
we can define initial conditions for the microscopic dynamics \eqref{eq:syst-gen}. For each $N\in\N$, we define 
\begin{equation}
\label{eq:ICx}
\begin{cases}
\displaystyle x^{0,N} = \Pdn(x_0) := \left( N \int_{\frac{i-1}{N}}^{\frac{i}{N}} x_0(s) ds \right)_{i \in \{1, \dots, N \}} \in (\R^d)^N \\
\displaystyle  m^{0,N} = \Pdn(m_0) := \left( N \int_{\frac{i-1}{N}}^{\frac{i}{N}} m_0(s) ds \right)_{i \in \{1, \dots, N \}} \in (\R)^N.
\end{cases}
\end{equation}
An schematic illustration of the transformation $\Pdn$ is provided in Figure \ref{fig:Transfo}.
Notice that condition \eqref{eq:integral_egal_a_1} implies that 
 \eqref{eq:sum_M} is fulfilled.\\

Now, for all $t\in [0,T]$, the solution $(x^N(t),m^N(t))$ to the microscopic system \eqref{eq:syst-gen} at time $t$ can be transformed  into a pair of piecewise-constant functions $s\mapsto(x_N(t,s),m_N(t,s))$ via the following operation: for all $s\in I$,
\begin{equation}
\label{eq:xN}
\begin{cases}
\displaystyle x_N(t,s) = \Pcn(x^{N}(t)):= \sum_{i=1}^N x_i^{N}(t) \mathbf{1}_{[\frac{i-1}{N}, \frac{i}{N})}(s)\\
\displaystyle m_N(t,s) = \Pcn(m^{N}(t)) := \sum_{i=1}^N m_i^{N}(t) \mathbf{1}_{[\frac{i-1}{N}, \frac{i}{N})}(s).
\end{cases}
\end{equation}
The transformation $\Pcn$ is also illustrated in Figure \ref{fig:Transfo}.
In turn, this transformation will allow us to define the discrete weight dynamics $\psi_i^{(N)}$ from the continuous ones. More specifically, given a functional $\psi: I \times L^2(I;\R^d)\times L^2(I;\R) \rightarrow \R$, we 
define
$\psi_i^{(N)}$ in the following way:
\begin{equation}\label{eq:psi}
\forall i\in\elts,\qquad \psi_i^{(N)}(x^{N}(t),m^{N}(t)) = N \int_{\frac{i-1}{N}}^{\frac{i}{N}} \psi(s,x_N(t,s),m_N(t,s)) ds, 
\end{equation}
where $x_N=\Pcn(x^N)$ and $m_N=\Pcn(m^N)$ as defined in \eqref{eq:xN}. 

\begin{example}
With this definition, for $\psi$ taken as \eqref{eq:psi-part} in Example \ref{ex}, the system \eqref{eq:syst-gen} becomes 
\begin{equation*}
\begin{cases}
\displaystyle \frac{d}{dt} x_i^{N}(t) = \frac{1}{N} \sumj m_j^N(t) \phi(x_j^{N}(t)-x_i^{N}(t))\\
\displaystyle  \frac{d}{dt} m_i^{N}(t) = \frac{1}{N} m_i^{N}(t) \sum_{j=1}^N m_j^{N}(t) S(x_i^{N}(t),x_j^{N}(t)) , \qquad i\in\elts.
\end{cases}
\end{equation*}
\end{example}

\begin{figure}
\includegraphics[trim= 2cm 6cm 2cm 6cm, clip=true, width=\textwidth]{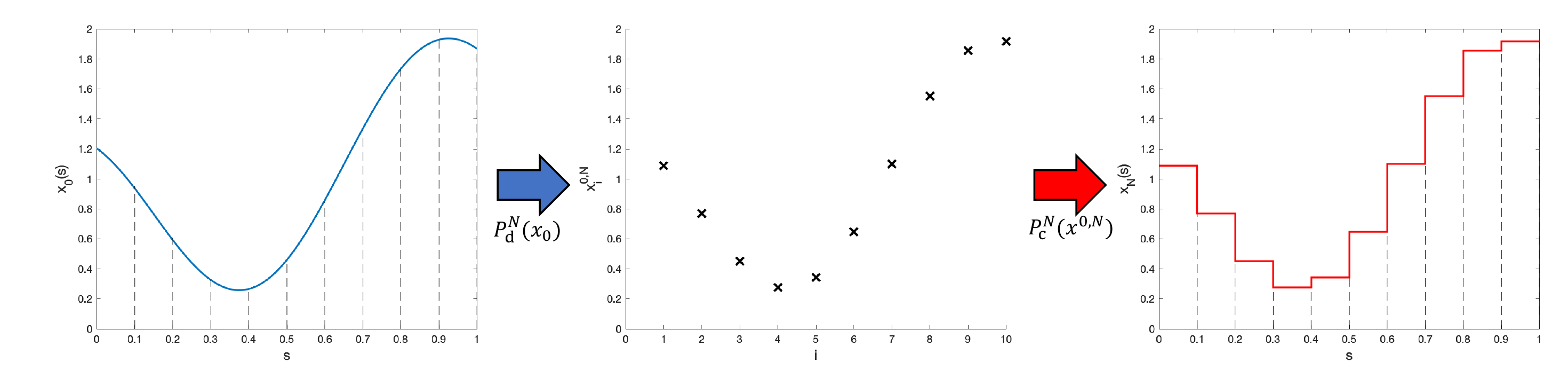}
\caption{Illustration of the two transformations $\Pdn$ and $\Pcn$ for $N=10$ and $x_0\in L^\infty(I;\R)$. Notice that $\lim_{N\rightarrow\infty} \Pcn(\Pdn(x_0))= x_0$.}\label{fig:Transfo}
\end{figure}

These transformations allow us to reveal an equivalence between the microscopic system \eqref{eq:syst-gen} and the continuous one \eqref{eq:GraphLimit-gen} in the case of piecewise-constant functions. More precisely, we  have the following
\begin{prop}\label{prop:equiv}
The vectors $(x^{N},m^{N})\in \mathcal{C}([0,T]; \R^d)^{N}\times \mathcal{C}([0,T];\R)^{N}$ satisfy the differential system~\eqref{eq:syst-gen} with initial condition $x^{0,N}\in(\R^d)^N$, $m^{0,N}\in\R^d$,
 and mass dynamics given by \eqref{eq:psi},
  if and only if the piecewise constant functions $x_N\in C([0,T];L^2(I;\R^d))$ and $m_N\in \mathcal{C}([0,T];L^2(I;\R))$ defined by
\eqref{eq:xN} 
satisfy the following system of integro-differential equations:
\begin{equation}\label{eq:syst-contN}
\begin{cases}
\displaystyle \partial_t x_N(t,s) = \int_I m_N(t,\se)\, \phi(\txn(t,\se)-\txn(t,s)) \,d\se\\
\displaystyle \partial_t \tmn(t,s) = N \int_{\frac{1}{N}\floor{sN}}^{\frac{1}{N}(\floor{sN}+1)} \psi(\se, \txn(t,\cdot),\tmn(t,\cdot)) \,d\se 
\end{cases}
\end{equation}
with initial conditions $\displaystyle x_N(0,s) = \sum_{i=1}^N x_i^{0,N} \mathbf{1}_{[\frac{i-1}{N}, \frac{i}{N})}(s)$ and  $\displaystyle m_N(0,s) = \sum_{i=1}^N m_i^{0,N} \mathbf{1}_{[\frac{i-1}{N}, \frac{i}{N})}(s)$.
\end{prop}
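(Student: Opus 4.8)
The plan is to exploit that the operator $\Pcn$ defined in \eqref{eq:xN} is, for each fixed $N$, a linear bijection between $(\R^d)^N$ (resp. $\R^N$) and the finite-dimensional subspace of $L^2(I;\R^d)$ (resp. $L^2(I;\R)$) consisting of functions that are constant on each interval $[\frac{i-1}{N},\frac{i}{N})$. Since the indicators $\mathbf{1}_{[\frac{i-1}{N},\frac{i}{N})}$ carry no time dependence, differentiation in $t$ commutes with $\Pcn$: whenever $t\mapsto x^N(t)$ is $C^1$ one has $\partial_t x_N(t,s)=\sum_{i=1}^N \dot x_i^N(t)\,\mathbf{1}_{[\frac{i-1}{N},\frac{i}{N})}(s)$, and likewise for $\tmn$. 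Consequently the left-hand sides of \eqref{eq:syst-contN} are themselves piecewise-constant functions of $s$ on this partition, with $i$-th value $\dot x_i^N(t)$ (resp. $\dot m_i^N(t)$). The whole argument then reduces to checking that the right-hand sides of \eqref{eq:syst-contN} are also piecewise constant on the same partition and to identifying their $i$-th values with the right-hand sides of \eqref{eq:syst-gen}.

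For the opinion equation I would fix $i$ and restrict $s$ to $[\frac{i-1}{N},\frac{i}{N})$, on which $\txn(t,s)=x_i^N(t)$. Splitting $I=\bigcup_j [\frac{j-1}{N},\frac{j}{N})$ and using that $\tmn(t,\cdot)$ and $\txn(t,\cdot)$ equal $m_j^N(t)$ and $x_j^N(t)$ on the $j$-th interval, the integral becomes $\sum_{j=1}^N \int_{\frac{j-1}{N}}^{\frac{j}{N}} m_j^N(t)\,\phi(x_j^N(t)-x_i^N(t))\,d\se = \frac1N \sumj m_j^N(t)\,\phi(x_j^N(t)-x_i^N(t))$, which is exactly the $i$-th opinion velocity in \eqref{eq:syst-gen}; in particular it does not depend on $s$ within the $i$-th interval.

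For the weight equation the key observation is that for $s\in[\frac{i-1}{N},\frac{i}{N})$ one has $\floor{sN}=i-1$, hence $\frac1N\floor{sN}=\frac{i-1}{N}$ and $\frac1N(\floor{sN}+1)=\frac{i}{N}$; the domain of integration in the second line of \eqref{eq:syst-contN} is therefore the fixed interval $[\frac{i-1}{N},\frac{i}{N})$, independent of $s$ within it. Thus the right-hand side equals $N\int_{\frac{i-1}{N}}^{\frac{i}{N}}\psi(\se,\txn(t,\cdot),\tmn(t,\cdot))\,d\se$, which is precisely $\psi_i^{(N)}(x^N(t),m^N(t))$ by \eqref{eq:psi}, and is again constant on the $i$-th interval. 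Since both sides of each equation in \eqref{eq:syst-contN} are piecewise constant on the partition, equality in $L^2(I)$ holds if and only if the values coincide on every interval, i.e. if and only if $\dot x_i^N(t)=\frac1N\sumj m_j^N\phi(x_j^N-x_i^N)$ and $\dot m_i^N(t)=\psi_i^{(N)}(x^N,m^N)$ for all $i$, which is \eqref{eq:syst-gen}. The equivalence of the initial data is immediate, as $x_N(0,\cdot)=\Pcn(x^{0,N})$ and $m_N(0,\cdot)=\Pcn(m^{0,N})$ by construction.

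The argument is essentially bookkeeping, so I do not anticipate a deep obstacle; the one point demanding care is the interchange of $\partial_t$ with the piecewise-constant representation and the precise sense in which \eqref{eq:syst-contN} is posed. One must verify that a $C^1$-in-time vector field produces a function $x_N$ that is differentiable in time as a map into $L^2(I;\R^d)$, with derivative the piecewise-constant function above, and that ``solving \eqref{eq:syst-contN}'' means this $L^2$-valued derivative equals the stated right-hand side for a.e. $s$ and every $t$. Once this matching between the two notions of solution is fixed, the finite-dimensional coefficient identification above closes the equivalence in both directions.
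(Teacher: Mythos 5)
Your argument is correct and is precisely the elementary bookkeeping the paper relies on (the paper in fact states Proposition~\ref{prop:equiv} without written proof): observing that $\floor{sN}=i-1$ on $[\frac{i-1}{N},\frac{i}{N})$, that both sides of \eqref{eq:syst-contN} are piecewise constant on the partition, and that the piecewise values reduce the integrals to the Riemann sums in \eqref{eq:syst-gen} and to $\psi_i^{(N)}$ via \eqref{eq:psi}. Your closing remark about interchanging $\partial_t$ with $\Pcn$ and fixing the sense in which the $L^2(I)$-valued equation is posed is exactly the right point of care, and the rest closes the equivalence in both directions.
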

\begin{rem}\label{Rem:x0m0}
Notice that according to the Lebesgue differentiation theorem, under the assumptions that $x_0\in L^1(I)$ and $m^0\in L^1(I)$, it holds
$$\lim_{N\rightarrow\infty} x_N(0,s) = x_0(s) \quad \text{ and } \quad \lim_{N\rightarrow\infty}m_N(0,s) = m_0(s)  $$  
for almost every $s\in I$. Figure \ref{fig:Transfo} illustrates the relationship between $x_0$ and $x_N(0,\cdot)=\Pcn(\Pdn(x_0))$ for a finite $N$.
\end{rem}

The main interest of this proposition is to have recast the discrete system \eqref{eq:syst-gen} into the framework of $L^\infty(I)$ functions. This will allow us to stay in this framework to state the convergence to a limit function, also belonging to $L^\infty(I)$.
Using this trick, we can now state one of the main results of this paper, namely the convergence of the solution to system \eqref{eq:syst-contN} to the solution to system \eqref{eq:GraphLimit-gen}.
\begin{theorem}\label{Th:GraphLimit}
Let $x_0\in L^\infty(I;\R^d)$ and $m_0\in L^\infty(I;\R)$ satisfying \eqref{eq:integral_egal_a_1}, and consider a functional $\psi:I \times L^2(I;\R^d)\times L^2(I;\R) \rightarrow \R$.
Suppose that the function $\phi$ satisfies Hyp. \ref{hyp:phi}, that 
$x\mapsto\psi(\cdot,x,m)$ and $m\mapsto\psi(\cdot,x,m)$ 
are uniformly Lipschitz functions in the $L^2$ norm, and that $m\mapsto\psi(\cdot,x,m)$ is sublinear in the $L^\infty$ norm.
Let $x^{0,N}=\Pdn(x_0)$ and $m^{0,N}=\Pdn(m_0)$ given by \eqref{eq:ICx}.
Then the solution $(\txn,\tmn)$ to \eqref{eq:syst-contN} with initial conditions 
\[\displaystyle x_N(0,s) = \sum_{i=1}^N x_i^{0,N} \mathbf{1}_{[\frac{i-1}{N}, \frac{i}{N})}(s) \quad \text{ and } \quad \displaystyle m_N(0,s) = \sum_{i=1}^N m_i^{0,N} \mathbf{1}_{[\frac{i-1}{N}, \frac{i}{N})}(s)
\]
  converges when $N$ tends to infinity in the $\mathcal{C}([0,T];L^2(I))$ topology.
  More specifically, there exists $(x,m)\in \mathcal{C}([0,T];L^2(I,\R^d))\times \mathcal{C}([0,T];L^2(I,\R))$ such that 
\begin{equation}\label{eq:convxm}
\displaystyle \|x-\txn\|_{\mathcal{C}([0,T];L^2(I,\R^d))} \xrightarrow[N\rightarrow+\infty]{} 0 
\quad \text{ and } \quad 
\|m-\tmn\|_{\mathcal{C}([0,T];L^2(I,\R))} \xrightarrow[N\rightarrow+\infty]{} 0.
\end{equation}
Furthermore, the limit functions $x$ and $m$ are solutions to the integro-differential system \eqref{eq:GraphLimit-gen} supplemented by the initial conditions $x(0,\cdot) = x_0$ and $m(0,\cdot) = m_0$.
\end{theorem}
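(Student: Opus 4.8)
The plan is to first secure a unique global solution $(x,m)$ of the limit system \eqref{eq:GraphLimit-gen} and then to compare it \emph{directly} with the piecewise-constant solution $(\txn,\tmn)$ of \eqref{eq:syst-contN} through a single Gr\"onwall estimate; this simultaneously produces the limit in \eqref{eq:convxm} and certifies that it solves \eqref{eq:GraphLimit-gen}. For the well-posedness of \eqref{eq:GraphLimit-gen}, I would run a Banach fixed-point (Picard) argument in $\mathcal{C}([0,T];L^2(I))$: since $\phi$ is Lipschitz with $\phi(0)=0$ (Hyp.~\ref{hyp:phi}) and $(x,m)\mapsto\psi(\cdot,x,m)$ is Lipschitz for the $L^2$ norm, the right-hand side of \eqref{eq:GraphLimit-gen} is Lipschitz on bounded sets, giving local existence and uniqueness. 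A priori $L^\infty$ bounds on $x$ and $m$ on all of $[0,T]$, obtained by a (coupled) Gr\"onwall argument using the sublinearity of $m\mapsto\psi(\cdot,x,m)$ in the $L^\infty$ norm together with $|\phi(a)|\le L_\phi|a|$, rule out finite-time blow-up and promote the solution to a global one in $\mathcal{C}([0,T];L^\infty(I))\hookrightarrow\mathcal{C}([0,T];L^2(I))$. The same computations applied to \eqref{eq:syst-contN}, using that the cell-averaging in the $\tmn$-equation does not increase the $L^\infty$ norm, yield bounds on $\|\txn(t,\cdot)\|_{L^\infty}$ and $\|\tmn(t,\cdot)\|_{L^\infty}$ that are uniform in both $t\in[0,T]$ and $N$; these uniform bounds are what make the subsequent Lipschitz estimates usable.

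Next I would set $E(t):=\|\txn(t,\cdot)-x(t,\cdot)\|_{L^2(I;\R^d)}+\|\tmn(t,\cdot)-m(t,\cdot)\|_{L^2(I;\R)}$ and estimate it from the integral (Duhamel) forms of \eqref{eq:syst-contN} and \eqref{eq:GraphLimit-gen}. In the opinion equation I write the integrand difference as $\tmn\,\phi(\Delta x_N)-m\,\phi(\Delta x)=(\tmn-m)\phi(\Delta x_N)+m\,(\phi(\Delta x_N)-\phi(\Delta x))$, with $\Delta x_N:=\txn(\tau,\se)-\txn(\tau,s)$ and $\Delta x:=x(\tau,\se)-x(\tau,s)$, and bound each term using the Lipschitzianity of $\phi$, the uniform $L^\infty$ bounds, and Cauchy--Schwarz, obtaining $\|\txn(t)-x(t)\|_{L^2}\le\|\txn(0)-x_0\|_{L^2}+C\int_0^t E(\tau)\,d\tau$.

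The weight equation is where the real work lies. Writing $I_s^N:=[\tfrac1N\floor{sN},\tfrac1N(\floor{sN}+1))$ for the mesh-$1/N$ cell containing $s$, I would split the difference of right-hand sides into a Lipschitz part $N\int_{I_s^N}[\psi(\se,\txn,\tmn)-\psi(\se,x,m)]\,d\se$, bounded by $L_\psi E(\tau)$ uniformly in $s$ thanks to the $L^2$-Lipschitz hypothesis, and a consistency part $C_N(\tau,s):=N\int_{I_s^N}\psi(\se,x,m)\,d\se-\psi(s,x,m)$. This gives $\|\tmn(t)-m(t)\|_{L^2}\le\|\tmn(0)-m_0\|_{L^2}+L_\psi\int_0^t E(\tau)\,d\tau+\int_0^t\|C_N(\tau,\cdot)\|_{L^2}\,d\tau$. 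Adding the two bounds and applying Gr\"onwall's lemma produces $E(t)\le\big(E(0)+\int_0^T\|C_N(\tau,\cdot)\|_{L^2}\,d\tau\big)e^{CT}$, so everything reduces to showing that $E(0)$ and the time-integrated consistency error vanish as $N\to\infty$.

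The initial error tends to $0$ because $\txn(0,\cdot)=\Pcn(\Pdn(x_0))$ and $\tmn(0,\cdot)=\Pcn(\Pdn(m_0))$ are the $L^2$-projections of $x_0,m_0$ onto functions constant on each $[\tfrac{i-1}N,\tfrac iN)$, which converge a.e.\ (Remark~\ref{Rem:x0m0}) and hence, by the $L^\infty$ bound and dominated convergence, in $L^2(I)$. The consistency term is the crux and the main obstacle: for fixed $\tau$, the map $s\mapsto N\int_{I_s^N}\psi(\se,x(\tau),m(\tau))\,d\se$ is exactly the $L^2$-orthogonal projection $P_N$ of $s\mapsto\psi(s,x(\tau),m(\tau))$ onto the mesh-$1/N$ step functions; since the step functions are dense in $L^2(I)$ and $\|P_N\|\le 1$, these projections converge strongly to the identity, so $\|C_N(\tau,\cdot)\|_{L^2}\to 0$ pointwise in $\tau$. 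The sublinearity assumption together with the uniform a priori bound on $\|m(\tau)\|_{L^\infty}$ keeps $\psi(\cdot,x(\tau),m(\tau))$ bounded in $L^\infty(I)$ uniformly in $\tau\in[0,T]$, hence $\|C_N(\tau,\cdot)\|_{L^2}$ is dominated by a fixed constant, and dominated convergence in $\tau$ gives $\int_0^T\|C_N(\tau,\cdot)\|_{L^2}\,d\tau\to 0$. Therefore $E(t)\to 0$ uniformly on $[0,T]$, which is \eqref{eq:convxm}, and since $(x,m)$ was constructed as a solution of \eqref{eq:GraphLimit-gen} the final assertion is immediate. The only genuinely non-routine point is this uniform-in-time control of the averaging error, which is precisely what forces the integrability/sublinearity encoded in the hypotheses on $\psi$; the remainder is a standard fixed-point-plus-Gr\"onwall scheme.
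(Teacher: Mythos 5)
Your proposal is correct and follows essentially the same strategy as the paper's proof: split the error in the weight equation into a Lipschitz part and a cell-averaging consistency term $g_N$, control the opinion equation via the uniform $L^\infty$ bounds and the Lipschitz constant of $\phi$, close with Gr\"onwall, and kill the consistency term by dominated convergence in time. The only (cosmetic) differences are that you work with the integral Duhamel form of the error rather than the paper's differential inequality with its $\epsilon$-regularization of the $L^2$ norms, and you justify $\|g_N(\tau,\cdot)\|_{L^2}\to 0$ via strong convergence of the $L^2$-orthogonal projections onto mesh step functions instead of the Lebesgue differentiation theorem.
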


Secondly, we will show that if the mass dynamics satisfy an \emph{indistinguishability} property (that we will define in Section \ref{sec:indisting}), we can take this limit process further and derive the mean-field limit of the system from the continuum graph limit.  The mean-field limit will be shown to be a solution to the following transport equation with source (see \cite{PiccoliRossi14, PouradierDuteil21}): 
\begin{equation*}
\partial_t \mu_t(x) + \nabla\cdot ( V[\mu_t](x) \mu_t(x)) = h[\mu_t](x)
\end{equation*}
where the non-local transport vector field $V$ and source term $h$ will be respectively defined from the interaction function $\phi$ and the mass dynamics $\psi$ (see Theorem \ref{Th:mfl}).

\section{The Graph Limit}
\label{sec:graphlim}

From here onward, in all of Section \ref{sec:graphlim}, we will assume the following properties for the mass dynamics $\psi$.
\begin{hyp}\label{hyp:psi}
The function $\psi:I\times L^\infty(I;\R^d)\times L^\infty(I;\R)$ is assumed to satisfy the following Lipschitz properties: there exists $L_\psi>0$ such that for all $(x_1,x_2,m_1,m_2)\in L^2(I)^4$,
\begin{equation}\label{eq:psilip2}
\begin{cases}
 \|\psi(\cdot,x_1,m_1)-\psi(\cdot,x_2,m_1)\|_{L^2(I)} \, \leq \,L_\psi \|x_1-x_2\|_{L^2(I)}\\
 \|\psi(\cdot,x_1,m_1)-\psi(\cdot,x_1,m_2)\|_{L^2(I)} \, \leq \, L_\psi \|m_1-m_2\|_{L^2(I)}.
\end{cases}
\end{equation}
Assume also that there exists $C_\psi>0$ such that for all $(x,m)\in L^\infty(I,\R^d\times\R)$, for all $s\in I$, 
\begin{equation}\label{eq:psisublin}
|\psi(s, x, m)| \leq C_\psi( 1+\|m\|_{L^\infty(I)}).
\end{equation}
\end{hyp}

Although the assumption of sublinear growth \eqref{eq:psisublin} may seem restrictive, it is necessary in order to prevent the blow-up in finite-time of the weight function $m$.
It is coherent with the framework of the Graph Limit developed in~\cite{Medvedev14} on graphs with $L^\infty$ weights. Indeed, we can view our system \eqref{eq:GraphLimit-gen} as the evolution of the opinions $x$ on a weighted non-symmetric graph with weights $W(s,\se)=m(t,\se)$.

\subsection{Well posedness of the graph limit model}
This paragraph is devoted to proving the existence and uniqueness of a solution to the graph limit equation \eqref{eq:GraphLimit-gen}. 
We start by proving existence and uniqueness for the decoupled system in the general case, where the mass dynamics are assumed to depend on the continuous index $s$.

In order to prove the well-posedness of system \eqref{eq:GraphLimit-gen}, we start by studying a decoupled system in which the dynamics of the opinions and of the weights are independent.

\begin{lemma}\label{Lemma:WellPos-decoupled}
Let ${\tilde{x} \in \CLx}$ and ${\tm \in \CLm}$. Let $x_0\in L^\infty(I;\R^d)$ and $m_0\in L^\infty(I;\R)$.
Let $\ba$ satisfy Hyp. \ref{hyp:phi} and $\psi$ satisfy Hyp. \ref{hyp:psi}. 
Then for any $T>0$, there exists a unique solution $(x,m)\in\CLu$ to the decoupled integro-differential system 
\begin{equation*}
\left\{\begin{array}{l}
\displaystyle \partial_t x(t,s) = \int_I \tm(t,s_*)\ba(x(t,s) - x(t,\se)) d\se; \qquad x(\cdot,0)=x_0 \\
\partial_t m(t,s) = \psi(s,\tx(t,\cdot),m(t,\cdot)); \qquad m(\cdot,0)=m_0.
\end{array}\right.
\end{equation*}
\end{lemma}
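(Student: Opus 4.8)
The plan is to exploit the decoupled structure: the right-hand side of the $x$-equation involves only the fixed datum $\tm$ and the unknown $x$, while that of the $m$-equation involves only the fixed datum $\tx$ and the unknown $m$, so the two equations can be solved independently by Banach's fixed point theorem applied to their Duhamel (integral) reformulations. The one point requiring genuine care is that the Lipschitz hypotheses on $\psi$ in Hyp.~\ref{hyp:psi} are phrased in the $L^2$ norm, whereas the conclusion asks for a solution in $\CLu$, i.e. $C^1$ in time with values in $L^\infty$.

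For the opinion equation I would study the operator $\mathcal{T}_x$ defined by $\mathcal{T}_x(x)(t,s) = x_0(s) + \int_0^t\int_I \tm(\tau,\se)\,\ba(x(\tau,s)-x(\tau,\se))\,d\se\,d\tau$ on $\CLx$. Using $\ba(0)=0$ and $\|\ba\|_{\Lip}=L_\phi$ (Hyp.~\ref{hyp:phi}), one checks that the inner integrand is bounded in $L^\infty(I)$, so $\mathcal{T}_x$ is well-defined, and that $\mathcal{T}_x$ is globally Lipschitz on $\CLx$ with constant at most $2L_\phi\sup_{t\in[0,T]}\|\tm(t,\cdot)\|_{L^\infty}$. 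Running the contraction on small time intervals and concatenating (or, to reach an arbitrary $T$ in one step, using a weighted Bielecki norm $\sup_{t}e^{-\lambda t}\|\cdot\|_{L^\infty}$ with $\lambda$ large) produces a unique fixed point $x\in\CLx$; continuity of $\tau\mapsto\tm(\tau,\cdot)$ and $\tau\mapsto x(\tau,\cdot)$ into $L^\infty$ together with the Lipschitz continuity of $\ba$ then show that $\partial_t x$ is continuous into $L^\infty$, so $x\in\CLxu$.

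For the weight equation I would treat $\mathcal{T}_m(m)(t,s)=m_0(s)+\int_0^t\psi(s,\tx(\tau,\cdot),m(\tau,\cdot))\,d\tau$. The sublinear bound \eqref{eq:psisublin} guarantees that $\psi(\cdot,\tx(\tau,\cdot),m(\tau,\cdot))\in L^\infty(I)\subset L^2(I)$, and the $L^2$-Lipschitz bound \eqref{eq:psilip2} makes $\mathcal{T}_m$ a contraction on $\mathcal{C}([0,T];L^2(I;\R))$ (again after the small-time or weighted-norm argument), yielding a unique $L^2$-solution $m$. To upgrade this to the $L^\infty$ setting I would set $M(t):=\|m(t,\cdot)\|_{L^\infty}$ and use \eqref{eq:psisublin} to obtain $M(t)\le \|m_0\|_{L^\infty}+C_\psi T+C_\psi\int_0^t M(\tau)\,d\tau$; Grönwall's lemma then gives the finite, explicit a priori bound $M(t)\le(\|m_0\|_{L^\infty}+C_\psi T)e^{C_\psi t}$, and the same estimate shows that $t\mapsto m(t,\cdot)$ is Lipschitz into $L^\infty$, hence $m\in\CLm$.

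Collecting the two components gives existence and uniqueness of $(x,m)$, with uniqueness inherited from each fixed point (using $L^\infty(I)\hookrightarrow L^2(I)$ since $|I|=1$). I expect the main obstacle to be exactly the tension between the $L^2$-phrased hypotheses on $\psi$ and the $L^\infty$-valued $C^1$ regularity claimed for $m$: the contraction is naturally carried out in $L^2$, the \emph{spatial} $L^\infty$ control is recovered only a posteriori from the sublinear growth via Grönwall, and the $L^\infty$-continuity in time of $\partial_t m=\psi(\cdot,\tx,m)$ must then be argued carefully (it holds for the concrete mass dynamics of Example~\ref{ex}); establishing this last regularity, rather than the fixed-point step itself, is where the real work lies.
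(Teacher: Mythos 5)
Your proposal is correct and follows essentially the same route as the paper: a Banach fixed-point argument for $x$ in the $L^\infty$-in-space topology and for $m$ in the $L^2$-in-space topology, followed by an a posteriori upgrade of $m$ to $L^\infty$ via the sublinearity bound \eqref{eq:psisublin} and Gr\"onwall's lemma, exactly as in the paper's proof (which uses $\|\tm(t,\cdot)\|_{L^1(I)}$ where you use $\|\tm(t,\cdot)\|_{L^\infty(I)}$, an immaterial difference since $|I|=1$). The $C^1$-into-$L^\infty$ regularity issue you flag is real but is handled in the paper by the same brief observation you anticipate, namely continuity of the integrand as a map into $L^\infty(I)$.
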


\begin{proof}
Since the two equations are decoupled, we will treat them independently. 
Let $\tT>0$ (to be specified later such that $\tT \leq T$) and $\Mx$ be a metric subspace of $\mathcal{C}([0,\tT];L^\infty(I;\R^d))$ consisting of functions $x$ satisfying $x(\cdot,0) = x_0$.
Let $\Kx$ be the operator defined by:
$$
\begin{array}{l l l }
\Kx: & \Mx & \rightarrow \Mx \\
 & x & \displaystyle  \mapsto (\Kx x):(t,s)\mapsto x_0(s) + \int_0^t \int_I \tm(\tau,s_*)\ba(x(\tau,s) - x(\tau,\se)) d\se d\tau.
\end{array}
$$
 We will show that $\Kx$ is contracting for the norm $\|\cdot\|_{\Mx} := \sup_{[0,\tT]} \esssup_{I} \|\cdot\|$.
Let $(x_1,x_2)\in \mathcal{C}([0,\tT];L^\infty(I;\R^d))^2)$. 
Then for all $s\in I$, for all $t\leq \tT$,
\begin{equation*}
\begin{split}
\| \Kx x_1 - \Kx x_2 \| (t,s) & = \left \|  \int_0^t \int_I \tm(\tau,s_*)\left[ \ba(x_1(\tau,s) - x_1(\tau,\se))-\ba(x_2(\tau,s) - x_2(\tau,\se))\right] d\se d\tau \right \| \\
& \leq  \int_0^t \int_I |\tm(\tau,s_*)| \Lphi \left\| (x_1(\tau,s) - x_1(\tau,\se))- (x_2(\tau,s) - x_2(\tau,\se))\right \| d\se d\tau \\
 & \leq  \int_0^t \int_I |\tm(\tau,s_*)| \Lphi \left( \| x_1(\tau,s)- x_2(\tau,s)\| + \| x_1(\tau,\se) - x_2(\tau,\se)\| \right) d\se d\tau  \\
\end{split}
\end{equation*}
from which we get: 
$$
\| \Kx x_1 - \Kx x_2 \|_{\Mx} \leq 2 \Lphi \tT \sup_{t\in [0,T]} \|\tm(t,\cdot)\|_{L_1(I;\R)} \quad \|x_1-x_2\|_{\Mx}.
$$
We remark that $\sup_{t\in [0,T]} \|\tm(t,\cdot)\|_{L_1(I;\R)}$ is well defined since  $\tm\in \CLm$ and $I=[0,1]$.
Since $\tm$ is given, choosing $\tT\leq (4 \Lphi  \sup_{t\in [0,T]} \|\tm(t,\cdot)\|_{L_1(I;\R)})^{-1}$ ensures that $\Kx$ is contracting on $[0,\tT]$.
By the Banach contraction mapping principle, there exists a unique solution $x\in \mathcal{C}([0,\tT];L^\infty(I;\R^d))$.
We then take $x(\cdot,\tT)$ as the initial condition, and the local solution can be extended to $[0, 2\tT]$, and by repeating the same argument, to $[0,T]$. Moreover, since the integrand in $K_{x_0}$ is continuous as a map $L^\infty(I) \to L^\infty(I) $,  $x$ is continuously differentiable and $x$ belongs to $\mathcal{C}^1([0,T];L^\infty(I;\R^d))$.\\
We now show existence and uniqueness of $m\in \CLmm$, solution to the second decoupled equation. 
Let $\Mm$ be a metric subspace of $\mathcal{C}([0,\tT];L^2(I;\R))$ consisting of functions $m$ satisfying $m(\cdot,0) = m_0$ with again $\tT>0$ to be specified later such that $\tT \leq T$.
Let $\Km$ be the operator defined by:
$$
\begin{array}{l l l }
\Km: & \Mm & \rightarrow \Mm \\
 & m &\displaystyle   \mapsto (\Km m):(t,s)\mapsto m_0(s) + \int_0^t \psi(s,\tx(\tau,\cdot),m(\tau,\cdot)) d\tau.
\end{array}
$$
 We will show that $\Km$ is contracting for the norm $\|\cdot\|_{\Mm} := \sup_{[0,\tT]} \|\cdot\|_{L^2(I)}$.
Let $(m_1,m_2)\in \mathcal{C}([0,\tT];L^2(I;\R))^2$.
Then for all $s\in I$, for all $t\leq \tT$,
\begin{equation*}
\begin{split}
\int_I | \Km m_1 - \Km m_2 |^2 (t,s) ds & = \int_I \left |  \int_0^t \psi(s,\tx(\tau,\cdot),m_1(\tau,\cdot))- \psi(s,\tx(\tau,\cdot),m_2(\tau,\cdot)) d\tau \right | ^2 ds \\
& \leq t \int_0^t \int_I \left | \psi(s,\tx(\tau,\cdot),m_1(\tau,\cdot))- \psi(s,\tx(\tau,\cdot),m_2(\tau,\cdot)) \right | ^2  ds \, d\tau \\
& \leq t \int_0^t L_\psi^2 \|m_1(\tau,\cdot)-m_2(\tau,\cdot)\|_{L^2(I)}^2 \, d\tau,
\end{split}
\end{equation*}
where the first inequality is a consequence of Cauchy-Schwarz and Jensen's inequalities and Fubini's theorem, and the second inequality comes from \eqref{eq:psilip2}.
We obtain: 
\begin{equation*}
\sup_{[0,\tT]} \| \Km m_1 - \Km m_2 \|_{L^2(I)}^2  \leq \tT^2 L_\psi^2 \sup_{[0,\tT]} \|  m_1 - m_2 \|_{L^2(I)}^2 
\end{equation*}
which implies: $\| \Km m_1 - \Km m_2 \|_{\Mm} \leq \tT L_\psi \| m_1 - m_2 \|_{\Mm}$.
Thus, if $\tT\leq \frac{1}{2L_\psi}$, by the Banach contraction mapping principle, there exists a unique solution $m\in \mathcal{C}^1([0,\tT];L^2(I;\R))$.
We then take $m(\tT),\cdot)$ as the initial condition, and the local solution can be extended to $[0, 2\tT]$, and by repeating the same argument, to $[0,T]$.
We thus showed that there exists a unique solution $m\in \mathcal{C}([0,T];L^2(I;\R))$. To prove that $m\in \CLm$, we will use the second assumption on $ \psi$ given by the sub-linearity~\eqref{eq:psisublin}. For all $(t,s)\in I\times [0,T]$,
$$
|m(t,s)| = \left | m_0(s) + \int_0^t \psi(s,x(\tau,\cdot),m(\tau,\cdot)) d\tau \right|
\leq |m_0(s)| + \int_0^t C_\psi (1+ \|m(\tau,\cdot)\|_{L^\infty(I)}) d\tau. 
$$
This implies
$$
\| m(t,\cdot)\|_{L^\infty(I)} \leq (\|m_0\|_{L^\infty(I)} + C_\psi t ) +  \int_0^t C_\psi \|m(\tau,\cdot)\|_{L^\infty(I)} d\tau
$$
and from Gronwall's lemma, 
\begin{equation}\label{eq:mbound}
\| m(t,\cdot)\|_{L^\infty(I)} \leq (\|m_0\|_{L^\infty(I)} + C_\psi t ) e^{C_\psi t}.
\end{equation}
Hence $m\in \CLm$. As previously, since the integrand in $K_{m_0}$ is continuous as a map $L^\infty(I) \to L^\infty(I) $,  $m$ is continuously differentiable and $m$ belongs to $\mathcal{C}^1([0,T];L^\infty(I;\R))$. This concludes the proof.

\end{proof}

\begin{rem}\label{Rem:Gronwall}
We have used the following result: 
If $u(t)\leq \alpha(t) + \int_0^t \beta(\tau) u(\tau) d\tau$, where $\beta$ is positive and $\alpha$ is non-decreasing,
then  $u(t) \leq \alpha(t)\exp(\int_0^t \beta(\tau) d\tau )$.
\end{rem}

By the previous lemma, we have proven that the two decoupled integro-differential equations are well-posed in $\CLu$. Using this result, we are now ready to demonstrate the well-posedness of the fully coupled system \eqref{eq:GraphLimit-gen}.

\begin{theorem}\label{Th:GL-wellpos}
 Let $x_0\in L^\infty(I;\R^d)$ and $m_0\in L^\infty(I;\R)$.
Let $\ba$ satisfy Hyp. \ref{hyp:phi} and $\psi$ satisfy Hyp. \ref{hyp:psi}. 
Then for any $T>0$, there exists a unique solution $(x,m)\in {\CLu}$ to the integro-differential system 
\begin{equation}
\label{eq:graphlimitcoupled}
\left\{\begin{array}{l}
\displaystyle \partial_t x(t,s) = \int_I m(t,s_*)\ba(x(t,s) - x(t,\se)) d\se; \qquad x(\cdot,0)=x_0 \\
\partial_t m(t,s) = \psi(s,x(t,\cdot),m(t,\cdot)); \qquad m(\cdot,0)=m_0.
\end{array}\right.
\end{equation}
\end{theorem}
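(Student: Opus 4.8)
The plan is to run a fixed-point argument at the level of the coupled system, using the decoupled well-posedness of Lemma \ref{Lemma:WellPos-decoupled} as a black box. The key observation is that in the decoupled system the $x$-equation sees the external data only through the weight $\tm$, while the $m$-equation sees it only through the opinion $\tx$. This suggests defining a solution operator $\mathcal{T}:(\tx,\tm)\mapsto(x,m)$, where $(x,m)$ is the unique pair furnished by Lemma \ref{Lemma:WellPos-decoupled} on a short interval $[0,\tT]$, and looking for a fixed point: a fixed point of $\mathcal{T}$ is exactly a solution of \eqref{eq:graphlimitcoupled}. The coupling is \emph{crossed} --- the output opinion depends on the input weight and the output weight on the input opinion --- and it is this structure that will make the contraction estimate clean.

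First I would fix the functional setting. I take the product Banach space $E:=\mathcal{C}([0,\tT];L^\infty(I;\R^d))\times\mathcal{C}([0,\tT];L^2(I;\R))$ with the norm $\|(\tx,\tm)\|_*:=\max(\sup_{[0,\tT]}\esssup_I\|\tx\|,\ \sup_{[0,\tT]}\|\tm\|_{L^2(I)})$, measuring opinions in $L^\infty$ and weights in $L^2$; the two are compatible since $\|\cdot\|_{L^2(I)}\le\|\cdot\|_{L^\infty(I)}$ on $I=[0,1]$. Next I establish uniform a priori bounds. Estimate \eqref{eq:mbound} gives $\|m(t,\cdot)\|_{L^\infty(I)}\le\bar M:=(\|m_0\|_{L^\infty(I)}+C_\psi T)e^{C_\psi T}$ for the output weight, independently of $\tx$. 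Feeding $\|\tm\|\le\bar M$ into the $x$-equation and using $|\ba(z)|\le\Lphi\|z\|$ from Hyp. \ref{hyp:phi}, a Gronwall estimate (Remark \ref{Rem:Gronwall}) yields $\|x(t,\cdot)\|_{L^\infty(I)}\le\bar X:=\|x_0\|_{L^\infty(I)}e^{2\Lphi\bar M T}$. I then let $\mathcal{B}\subset E$ be the closed (hence complete) subset of pairs with the correct initial data and satisfying $\sup_t\|\tm\|_{L^\infty}\le\bar M$ and $\sup_t\|\tx\|_{L^\infty}\le\bar X$, and conclude $\mathcal{T}(\mathcal{B})\subseteq\mathcal{B}$.

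The core step is the contraction estimate. For two inputs $(\tx_i,\tm_i)$ with outputs $(x_i,m_i)$, I would write each output via its Duhamel integral and split the differences. For the opinions, $\tm_1\ba(x_1-\cdot)-\tm_2\ba(x_2-\cdot)=\tm_1[\ba(x_1-\cdot)-\ba(x_2-\cdot)]+(\tm_1-\tm_2)\ba(x_2-\cdot)$; using the Lipschitz bound on $\ba$, the bound $\bar X$ on $x_2$, $\|\tm_1\|_{L^1}\le\|\tm_1\|_{L^2}\le\bar M$, and Gronwall to absorb the self-dependence on $x_1-x_2$, I expect $\sup_{[0,\tT]}\|x_1-x_2\|_{L^\infty}\le A(\tT)\,\sup_{[0,\tT]}\|\tm_1-\tm_2\|_{L^2}$ with $A(\tT)=2\Lphi\bar X\,\tT\,e^{2\Lphi\bar M\tT}\to0$ as $\tT\to0$. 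For the weights, splitting $\psi(\cdot,\tx_1,m_1)-\psi(\cdot,\tx_2,m_2)$ through the intermediate term $\psi(\cdot,\tx_1,m_2)$, applying the two inequalities \eqref{eq:psilip2} and Gronwall, I expect $\sup_{[0,\tT]}\|m_1-m_2\|_{L^2}\le B(\tT)\,\sup_{[0,\tT]}\|\tx_1-\tx_2\|_{L^2}\le B(\tT)\,\sup_{[0,\tT]}\|\tx_1-\tx_2\|_{L^\infty}$ with $B(\tT)=L_\psi\tT\,e^{L_\psi\tT}\to0$. By the crossed structure these one-sided estimates combine into $\|\mathcal{T}(\tx_1,\tm_1)-\mathcal{T}(\tx_2,\tm_2)\|_*\le\max(A(\tT),B(\tT))\,\|(\tx_1,\tm_1)-(\tx_2,\tm_2)\|_*$, so a small enough $\tT$ with $\max(A(\tT),B(\tT))<1$ makes $\mathcal{T}$ a contraction on $\mathcal{B}$.

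Banach's fixed-point theorem then gives a unique $(x,m)\in\mathcal{B}$ with $\mathcal{T}(x,m)=(x,m)$, i.e. a unique solution of \eqref{eq:graphlimitcoupled} on $[0,\tT]$, and its regularity $(x,m)\in\CLu$ follows as in Lemma \ref{Lemma:WellPos-decoupled} since the right-hand sides are continuous maps into $L^\infty(I)$. The decisive point, which I would treat most carefully, is that $\bar M$, $\bar X$, $\Lphi$ and $L_\psi$ are all bounded uniformly on $[0,T]$, so the contraction time $\tT$ can be chosen independent of the starting time; this lets me re-initialize at $\tT,2\tT,\dots$ and extend to all of $[0,T]$ in finitely many steps. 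The main obstacle is therefore not any single estimate but the bookkeeping that keeps the two norms ($L^\infty$ for $x$, $L^2$ for $m$) compatible and guarantees the uniform-in-time bounds needed for the extension; the sublinearity \eqref{eq:psisublin} is precisely what prevents $\bar M$ from blowing up and thus what makes $\tT$ uniform.
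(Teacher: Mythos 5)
Your proof is correct, and it reorganizes the paper's argument rather than reproducing it. The paper iterates exactly your operator $\mathcal{T}$ --- its approximating sequence is $(x^n,m^n)=\mathcal{T}(x^{n-1},m^{n-1})$ starting from the constant-in-time initial data --- but instead of proving that $\mathcal{T}$ contracts on a short interval, it shows directly that the iterates are Cauchy on all of $[0,T]$ via the factorial estimate $u_n(t)\le (A_T e^{A_T T} t)^n U_0/n!$ on $u_n=\|x^{n+1}-x^n\|_{L^2(I)}^2+\|m^{n+1}-m^n\|_{L^2(I)}^2$. That gain removes any smallness condition on the time interval, so the paper needs no re-initialization at the level of the coupled system; the price is that the iteration does not deliver uniqueness, which the paper proves separately by a Gronwall argument on the difference of two solutions. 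Your contraction route buys existence and uniqueness in one stroke on $[0,\tT]$, at the cost of the gluing argument (which you handle correctly, since $M_T$, $X_T$, $\Lphi$, $L_\psi$ are fixed on $[0,T]$) and of one point you should make explicit: Banach gives uniqueness of the fixed point only \emph{within} $\mathcal{B}$, so to conclude uniqueness of solutions you must add that any solution in $\CLu$ satisfies the a priori bounds \eqref{eq:mbound} and its analogue for $x$ (by running the same Gronwall estimates on the solution itself) and is a fixed point of $\mathcal{T}$ by the uniqueness part of Lemma \ref{Lemma:WellPos-decoupled}, hence lies in $\mathcal{B}$. Two further small remarks: your mixed norm (opinions in $L^\infty$, weights in $L^2$) is internally consistent and both one-sided constants $A(\tT)$, $B(\tT)$ check out, whereas the paper measures both components in $L^2$; and elements of $\mathcal{B}$ are only $L^2$-continuous in time with a uniform $L^\infty$ bound, which is formally weaker than the hypothesis $\tm\in\CLm$ of Lemma \ref{Lemma:WellPos-decoupled}, but the proof of that lemma only uses $\sup_t\|\tm(t,\cdot)\|_{L^1(I)}<\infty$ and $\tx(t,\cdot)\in L^2(I)$, so $\mathcal{T}$ is indeed well defined on $\mathcal{B}$.
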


\begin{proof}
The proof will consist of proving the convergence of a sequence of functions $(x^n,m^n)_{n\in\N}$ of $\mathcal{C}([0,T];L^\infty(I;\R^d \times \R))$ defined as follows: 
\begin{itemize}
\item For almost every $s \in I$, for all $t \in [0,T]$, $x^0(t,s) = x_0(s)$ and $m^0(t,s) = m_0(s)$.
\item For all $n\in \N^*$, 
\begin{equation*}
\left\{\begin{array}{l}
\displaystyle \partial_t x^n(t,s) = \int_I m^{n-1}(t,s_*)\ba(x^n(t,s) - x^n(t,\se)) d\se; \qquad x^n(\cdot,0)=x_0 \\
\partial_t m^n(t,s) = \psi(s,x^{n-1}(t,\cdot),m^n(t,\cdot)); \qquad m^n(\cdot,0)=m_0.
\end{array}\right.
\end{equation*}
\end{itemize}
From Lemma \ref{Lemma:WellPos-decoupled}, the sequence is well defined and each term $(x^n,m^n)$ is indeed in  {$\mathcal{C}([0,T];L^\infty(I;\R^d \times \R))$}.
We begin by highlighting the fact that the terms of the sequence are uniformly bounded in $L^\infty(I;\R^d\times\R)$.
Indeed, from equation \eqref{eq:mbound}, we know that for every $n\in \N$, for all $t\in [0,T]$,
$$ \| m^n(t,\cdot)\|_{L^\infty(I)} \leq M_T := (\|m_0\|_{L^\infty(I)} + C_\psi T ) e^{C_\psi T}.$$
Moreover, we can now use this bound to estimate the growth of $x$, noticing that Hypothesis \ref{hyp:phi} implies that $\|\ba(r)\| \leq \Lphi \|r\|$, from the fact that $\ba(0)=0$. We estimate:
$$
\|x^n(t,s)\| \leq \|x^n(0,s)\| + M_T \int_0^t  \int_I \| \ba(x^n(\tau,\se)-x^n(\tau,s) ) \|  d\se d\tau \leq \|x_0(s)\| + 2 M_T \Lphi \int_0^t   \|x^n(\tau,\cdot) \|_{L^\infty(I)} d\tau .
$$
Then Gronwall's lemma implies that for every $n\in \N$, for all $t\in [0,T]$,
\begin{equation*}
\|x^n(t,\cdot)\|_{L^\infty(I)} \leq X_T := \|x_0\|_{L^\infty(I)} e^{2 M_T \Lphi T}.
\end{equation*}
We now prove that the sequence $(x^n,m^n)_{n\in \N}$ is a Cauchy sequence.
For every $n\in \N^*$,
\begin{equation*}
\begin{split}
|x^{n+1}-x^n|(t,s) = & \bigg | \int_0^t  \int_I m^{n}(\tau,s_*)\ba(x^{n+1}(\tau,s) - x^{n+1}(\tau,\se)) d\se d\tau \\
& - \int_0^t \int_I m^{n-1}(\tau,s_*)\ba(x^n(\tau,s) - x^n(\tau,\se)) d\se d\tau \bigg | \\
= & \bigg | \int_0^t  \int_I (m^{n}(\tau,s_*)-m^{n-1}(\tau,s_*)) \ba(x^{n+1}(\tau,s) - x^{n+1}(\tau,\se)) d\se d\tau \\
& + \int_0^t \int_I m^{n-1}(\tau,s_*)\left( \ba(x^{n+1}(\tau,s) - x^{n+1}(\tau,\se)) - \ba(x^n(\tau,s) - x^n(\tau,\se)) \right) d\se d\tau \bigg | \\
\leq &  \int_0^t  \int_I |m^{n}(\tau,s_*)-m^{n-1}(\tau,s_*)| 2 \Lphi\, X_T \, d\se d\tau \\
& + \int_0^t \int_I M_T \, \Lphi \|(x^{n+1}(\tau,s) - x^{n+1}(\tau,\se)) - (x^n(\tau,s) - x^n(\tau,\se)) \| d\se d\tau  \\
\end{split}
\end{equation*}
Then for all $t\in [0,T]$, for every $n\in \N^*$, 
\begin{equation*}
\begin{split}
|x^{n+1}-x^n|^2(t,s) \leq & 8 t  \Lphi^2 \, X_T^2 \int_0^t  \int_I |m^{n}(\tau,s_*)-m^{n-1}(\tau,s_*)|^2  \, d\se d\tau \\
& + 2 \Lphi^2 M_T^2 t \int_0^t \int_I 2(\|(x^{n+1}(\tau,s) - x^n(\tau,s)\|^2 + \| x^{n+1}(\tau,\se)- x^n(\tau,\se) \|^2) d\se d\tau
\end{split}
\end{equation*}
and we get
\begin{equation*}
\begin{split}
\|x^{n+1}-x^n\|_{L^2(I)}^2(t) \leq & 8 t  \Lphi^2 \, X_T^2 \int_0^t  \|m^{n}-m^{n-1}\|_{L^2(I)}^2 (\tau)  d\tau + 8t \Lphi^2 M_T^2  \int_0^t \|x^{n+1} - x^n\|_{L^2(I)}^2 (\tau) d\tau.
\end{split}
\end{equation*}
A similar computation for $m$ gives for every $n\in\N$
\begin{equation*}
\begin{split}
|m^{n+1}-m^n|(t,s) = & \bigg | \int_0^t  (\psi(s,x^n(\tau,\cdot),m^{n+1}(\tau,\cdot)) - \psi(s,x^{n-1}(\tau,\cdot),m^{n}(\tau,\cdot)) d\tau \bigg | \\
\leq &  \int_0^t \bigg ( |\psi(s,x^n(\tau,\cdot),m^{n+1}(\tau,\cdot)) - \psi(s,x^n(\tau,\cdot),m^{n}(\tau,\cdot)) | \\
& + \psi(s,x^n(\tau,\cdot),m^{n}(\tau,\cdot)) - \psi(s,x^{n-1}(\tau,\cdot),m^{n}(\tau,\cdot)) | \bigg ) d\tau . \\
\end{split}
\end{equation*}
Squaring and integrating yields:
\begin{equation}\label{eq:ineqmn}
\begin{split}
\|m^{n+1}-m^n\|_{L^2(I)}^2(t) 
\leq &  2 t \int_0^t \int_I |\psi(s,x^n(\tau,\cdot),m^{n+1}(\tau,\cdot)) - \psi(s,x^n(\tau,\cdot),m^{n}(\tau,\cdot)) |^2 ds \, d\tau \\
& + 2t \int_0^t \int_I | \psi(s,x^n(\tau,\cdot),m^{n}(\tau,\cdot)) - \psi(s,x^{n-1}(\tau,\cdot),m^{n}(\tau,\cdot)) | ^2 ds \, d\tau  \\
\leq &  2 t L_\psi^2 \int_0^t  \|m^{n+1}-m^{n}\|_{L^2(I)}^2(\tau) d\tau +  2 t L_\psi^2  \int_0^t \| x^n - x^{n-1}\|_{L^2(I)}^2(\tau) d\tau .
\end{split}
\end{equation}
Thus, denoting $A_T = \max(8 T  \Lphi^2 \, X_T^2,\, 8 T  \Lphi^2 \,  M_T^2,\,  2 T L_\psi^2)$, and denoting by $(u_n)_{n\in\N}$ the sequence defined by
$u_n(t) := \|x^{n+1}-x^n\|_{L^2(I)}^2(t) + \|m^{n+1}-m^n\|_{L^2(I)}^2(t)$,  we obtain for every $n\in \N^*$ and all $t\in [0,T]$:
\begin{equation*}
\begin{split}
u_{n}(t) \leq 
A_T \int_0^t u_n(\tau) d\tau  + A_T \int_0^t u_{n-1}(\tau)  d\tau .
\end{split}
\end{equation*}
Since $t\mapsto  A_T \int_0^t u_{n-1}(\tau)  d\tau$ is non-decreasing, Gronwall's lemma implies (see Remark \ref{Rem:Gronwall}):
$$
u_{n}(t) \leq A_T \; e^{A_T T} \int_0^t u_{n-1}(\tau)  d\tau .
$$ 
Denoting $U_0 := \sup_{[0,T]} u_0(t)$, one can easily show by induction that for all $t\in [0,T]$, for all $n\in \N$, 
$$
u_n(t) \leq \frac{(A_T e^{A_T T}t)^n}{n!} U_0.
$$
This is the general term of a convergent series, hence for all $t\in [0,T]$, $\lim_{n\rightarrow \infty } u_n(t) = 0$, which implies that for all $t\in [0,T]$, $\|x^{n+1}-x^n\|_{L^2(I)}(t)$ and $\|m^{n+1}-m^n\|_{L^2(I)}(t)$ also converge to $0$ as $n$ tends to infinity.
Thus, $(x^n)_{n\in\N}$ and $(m^n)_{n\in\N}$ are Cauchy sequences in the Banach spaces $\mathcal{C}([0,T];L^2(I;\R^d))$ and $\mathcal{C}([0,T];L^2(I;\R))$.
One can easily show that their limits $(x,m)$ satisfy the integro-differential system \eqref{eq:GraphLimit-gen}. Furthermore, from the uniform bounds on $\|x^n(t,\cdot)\|_{L^\infty(I)}$ and $\|m^n(t,\cdot)\|_{L^\infty(I)}$, we deduce that $(x,m)\in \mathcal{C}([0,T];L^\infty(I;\R^d \times \R))$, with $\|x(t,\cdot)\|_{L^\infty(I)} \leq X_T$ and $\|m(t,\cdot)\|_{L^\infty(I)} \leq M_T$ for all $t\in [0,T]$. Finally, as previously, looking at the integrand in the integral formulation of \eqref{eq:graphlimitcoupled}, we deduce that $(x,m)\in \mathcal{C}^1([0,T];L^\infty(I;\R^d \times \R))$ which concludes the proof of existence.
\\

Let us now deal with the uniqueness. Let us assume that there exist two solutions to the equation~\eqref{eq:graphlimitcoupled} denoted $(x_1,m_1)$ and  $(x_2,m_2)$ with the same initial condition. Then, we have 
\begin{equation*}
\left\{\begin{array}{l}
\displaystyle (x_1-x_2)(t,s) = \int_0^t \int_I m_1(\tau,\se) \phi(x_1(\tau,\se)-x_1(\tau,s))d\se d\tau\\
\displaystyle ~~~~~~~~~~~~~~~~~~~~~~~~~~~~~~~~~~~~~  -  \int_0^t \int_I m_2(\tau,\se) \phi(x_2(\tau,\se)-x_2(\tau,s))d\se d\tau \\
\displaystyle (m_1-m_2)(t,s) =  \int_0^t (\psi(s,x_1(\tau,\cdot), m_1(\tau,\cdot)) -\psi(s,x_2(\tau,\cdot), m_2(\tau,\cdot))) d\tau
\end{array}\right.
\end{equation*}
that we rewrite 
\begin{equation*}
\left\{\begin{array}{l}
\displaystyle (x_1-x_2)(t,s) = \int_0^t \int_I (m_1-m_2)(\tau,\se) \phi(x_1(\tau,\se)-x_1(\tau,s))d\se d\tau\\
\displaystyle ~~~~~~~~~~~~~~~~~~~~~~~~~~~~~~~~~~~~~  +  \int_0^t \int_I m_2(\tau,\se) (\phi(x_1(\tau,\se)-x_1(\tau,s)) -\phi(x_2(\tau,\se)-x_2(\tau,s)))d\se d\tau \\
\displaystyle (m_1-m_2)(t,s) =  \int_0^t (\psi(s,x_1(\tau,\cdot), m_1(\tau,\cdot)) -\psi(s,x_1(\tau,\cdot), m_2(\tau,\cdot))) d\tau\\
\displaystyle ~~~~~~~~~~~~~~~~~~~~~~~~~~~~~~~~~~~~~  + \int_0^t (\psi(s,x_1(\tau,\cdot), m_2(\tau,\cdot)) -\psi(s,x_2(\tau,\cdot), m_2(\tau,\cdot))) d\tau
\end{array}\right.
\end{equation*}
Thus, we have 
\begin{equation*}
\begin{array}{l}
\displaystyle |x_1-x_2|(t,s) \leq \int_0^t \int_I 2 \Lphi X_T |m_1-m_2|(\tau,\se) d\se  d\tau + \int_0^t \int_I M_T\Lphi (|x_1-x_2|(\tau,\se) + |x_1-x_2|(\tau,s)) d\se  d\tau
\end{array}
\end{equation*}
from which we deduce 
\begin{equation*}
\begin{array}{l}
\displaystyle |x_1-x_2|^2(t,s) \leq  8 \Lphi^2 X_T^2 t  \int_0^t \int_I |m_1-m_2|^2(\tau,\se) d\se d\tau \\ 
 \displaystyle ~~~~~~~~~~~~~~~~~~~~~~~~~~~~~~~~~~~~~  + 2  M_T^2 \Lphi^2 t   \int_0^t \int_I |x_1-x_2|^2(\tau,\se) d\se d\tau +   2  M_T^2 \Lphi^2 t   \int_0^t  |x_1-x_2|^2(\tau,s)  d\tau.
\end{array}
\end{equation*}
Thus, we have, 
\begin{equation*}
\begin{array}{l}
\displaystyle \|x_1-x_2\|^2_{L^2(I)}(t) \leq  A_T \left( \int_0^t \|m_1-m_2\|^2_{L^2(I)}(\tau)  d\tau + \int_0^t \|x_1-x_2\|^2_{L^2(I)}(\tau) \right) d\tau.
\end{array}
\end{equation*}
Similarly, 
\begin{equation*}
\begin{array}{rl}
\displaystyle \|m_1-m_2\|^2_{L^2(I)}(t) &\leq 2 t  \int_0^t \|\psi(\cdot,x_1(\tau),m_1(\tau)) - \psi(\cdot,x_1(\tau),m_2(\tau)) \|_{L^2(I)}^2 d\tau \\ 
 &\displaystyle ~~~~~~~~~~~~~~~~~~~~~~~~~~~~~~~~~~~~~  +  2 t  \int_0^t \|\psi(\cdot,x_1(\tau),m_2(\tau)) - \psi(\cdot,x_2(\tau),m_2(\tau)) \|_{L^2(I)}^2 d\tau\\
  &\displaystyle  \leq 2t L_\psi  \int_0^t \left(\|m_1-m_2\|^2_{L^2(I)}(\tau) + \|x_1-x_2\|^2_{L^2(I)}(\tau) \right) d\tau. 
\end{array}
\end{equation*}
Finally, 
\begin{equation*}
\|x_1-x_2\|^2_{L^2(I)}(t) +  \|m_1-m_2\|^2_{L^2(I)}(t) \leq  2 A_T \int_0^t \left(\|x_1-x_2\|^2_{L^2(I)}(\tau) + \|m_1-m_2\|^2_{L^2(I)}(\tau)  \right) d\tau.
\end{equation*}
By Gronwall lemma, we deduce that, for all $t \in [0,T]$, 
\begin{equation*}
\|x_1-x_2\|^2_{L^2(I)}(t) +  \|m_1-m_2\|^2_{L^2(I)}(t) = 0,
\end{equation*}
which concludes the proof of uniqueness.
\end{proof}

\subsection{Well posedness of the microscopic system}

In this paragraph, we state the  existence and uniqueness results  of a solution to the discrete system \eqref{eq:syst-gen}.
We do not provide the proof since it consists on a straightforward adaptation at the discrete level of the proof established for the continuous case, the graph limit equation, based on a use of the fixed point theorem.  

\begin{theorem}
Let $(x^{0,N},m^{0,N}) \in \R^{dN} \times \R^N$.
Let $\ba$ satisfy Hyp.~\ref{hyp:phi}, $\psi$ satisfy Hyp.~\ref{hyp:psi}, 
and $\psi_i^{(N)}$ be defined  by \eqref{eq:psi}.
Then for any $T>0$, there exists a unique solution $(x^{N},m^{N}) \in \mathcal{C}^1([0,T];\R^{dN} \times \R^N)$ to the discrete system \eqref{eq:syst-gen} with initial condition $(x^{0,N},m^{0,N}) \in \R^{dN} \times \R^N$. 
Moreover, there exists constants $\overline{X}$ and  $\overline{M}$ such that for all $t \in [0,T]$, for all $i \in \{1, \dots, N \}$,
\begin{equation}
\label{eq:bornex}
\| x_i^{(N)}(t)\| \leq  \overline{X} 
\end{equation}
and 
\begin{equation}
\label{eq:bornem}
| m_i^{(N)}(t)| \leq  \overline{M}.
\end{equation}
\end{theorem}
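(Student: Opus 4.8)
The plan is to mirror the fixed-point argument used for the continuous problem in Theorem~\ref{Th:GL-wellpos}, but the finite-dimensional setting makes matters simpler: since \eqref{eq:syst-gen} is a genuine system of $dN+N$ ordinary differential equations, it suffices to show that its right-hand side is locally Lipschitz, invoke the Cauchy--Lipschitz (Picard--Lindel\"of) theorem for local existence and uniqueness, and then rule out finite-time blow-up via a priori bounds. An alternative route would combine Proposition~\ref{prop:equiv} with Theorem~\ref{Th:GL-wellpos}, after checking that the cell-averaged mass dynamics appearing in \eqref{eq:syst-contN} again satisfies Hyp.~\ref{hyp:psi}; I would nonetheless present the direct ODE route below, as it is the more self-contained.

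The first step is Lipschitz continuity of the vector field. Writing $F_i(x^N,m^N)=\frac1N\sumj m_j^N\,\ba(x_j^N-x_i^N)$ for the position dynamics, local Lipschitz continuity in $(x^N,m^N)$ is immediate from Hyp.~\ref{hyp:phi}, since $\ba$ is globally Lipschitz and $F$ depends polynomially on the weights. For the weight dynamics $G_i(x^N,m^N):=\psi_i^{(N)}(x^N,m^N)$ defined in \eqref{eq:psi}, I would transfer the functional bounds \eqref{eq:psilip2} to the Euclidean setting using the elementary scaling $\|\Pcn(v)\|_{L^2(I)}=N^{-1/2}|v|$, valid for any vector $v$, where $|\cdot|$ denotes the Euclidean norm. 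Applying Jensen's inequality on each cell $[\tfrac{i-1}{N},\tfrac{i}{N})$, summing over $i$, and then using the second line of \eqref{eq:psilip2} gives
\[
\sumi \big|G_i(x^N,m_1^N)-G_i(x^N,m_2^N)\big|^2 \;\leq\; N\,L_\psi^2\,\|m_{1,N}-m_{2,N}\|_{L^2(I)}^2 \;=\; L_\psi^2\,|m_1^N-m_2^N|^2,
\]
where $m_{a,N}=\Pcn(m_a^N)$ as in \eqref{eq:xN}; the two factors of $N$ cancel, so $G$ inherits the constant $L_\psi$. The same computation with the first line of \eqref{eq:psilip2} yields Lipschitz continuity in $x^N$. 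Hence the full right-hand side of \eqref{eq:syst-gen} is locally Lipschitz, and Cauchy--Lipschitz provides a unique maximal solution.

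The second step is the a priori bounds, which reproduce the continuous estimates almost verbatim. From the sublinearity \eqref{eq:psisublin}, using Jensen and $\|\Pcn(m^N)\|_{L^\infty(I)}=\max_j|m_j^N|$, one gets $|\psi_i^{(N)}(x^N,m^N)|\leq C_\psi(1+\max_j|m_j^N|)$, so that $t\mapsto\max_i|m_i^N(t)|$ obeys the same integral inequality that led to \eqref{eq:mbound}; Gronwall's lemma (Remark~\ref{Rem:Gronwall}) then gives \eqref{eq:bornem} with $\overline{M}=(\max_i|m_i^{0,N}|+C_\psi T)e^{C_\psi T}$. Substituting this into the position equation and using $\|\ba(r)\|\leq\Lphi\|r\|$ (which follows from $\ba(0)=0$ and Hyp.~\ref{hyp:phi}) yields the integral inequality $\max_i\|x_i^N(t)\|\leq\max_i\|x_i^{0,N}\|+2\overline{M}\Lphi\int_0^t\max_j\|x_j^N(\tau)\|\,d\tau$, and a second Gronwall gives \eqref{eq:bornex} with $\overline{X}=\big(\max_i\|x_i^{0,N}\|\big)e^{2\overline{M}\Lphi T}$. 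Since these bounds hold on every interval of existence, the maximal solution cannot blow up in finite time and thus extends to all of $[0,T]$; continuity of the right-hand side along the solution then upgrades it to $\mathcal{C}^1([0,T];\R^{dN}\times\R^N)$.

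I expect the only genuinely delicate point to be the step where the functional hypotheses \eqref{eq:psilip2} and \eqref{eq:psisublin}, stated in the $L^2$ and $L^\infty$ norms of the continuum functional $\psi$, are converted into finite-dimensional estimates on the averaged coefficients $\psi_i^{(N)}$. This rests on the norm scaling $\|\Pcn(v)\|_{L^2(I)}=N^{-1/2}|v|$ together with Jensen's inequality, which conveniently make the $N$-dependence cancel so that the Lipschitz and sublinearity constants pass to the discrete level unchanged. Once this transfer is secured, the remainder is entirely standard ODE theory, which is consistent with the paper's remark that the proof is a straightforward adaptation of the continuous argument.
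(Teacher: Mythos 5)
Your proof is correct, and it reaches the result by a route that is organized differently from the one the paper sketches. The paper omits the proof but states that it is an adaptation of the continuous argument, i.e.\ of Lemma~\ref{Lemma:WellPos-decoupled} and Theorem~\ref{Th:GL-wellpos}: first a Banach fixed point for each decoupled equation, then a Picard-type iteration whose convergence is established by a Cauchy-sequence/Gronwall estimate for the coupled system. You instead exploit the fact that \eqref{eq:syst-gen} is a genuine finite-dimensional ODE system: you verify local Lipschitz continuity of the full coupled vector field once, invoke Cauchy--Lipschitz, and extend globally via the a priori bounds. This collapses the two-stage decouple-and-iterate structure into a single application of the standard theorem and is arguably the cleaner argument at the discrete level. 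The genuinely substantive content you supply --- which the paper's one-line remark glosses over --- is the transfer of the functional hypotheses \eqref{eq:psilip2} and \eqref{eq:psisublin} to the averaged coefficients $\psi_i^{(N)}$ of \eqref{eq:psi}: the identity $\|\Pcn(v)\|_{L^2(I)}^2=\frac{1}{N}|v|^2$ combined with Jensen's inequality on each cell makes the factors of $N$ cancel exactly, so the discrete vector field inherits the constants $L_\psi$ and $C_\psi$ unchanged; your Gronwall estimates for $\overline{M}$ and $\overline{X}$ then mirror \eqref{eq:mbound} and the bound $X_T$ from Theorem~\ref{Th:GL-wellpos} verbatim. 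Both approaches are ultimately fixed-point arguments, so nothing is lost; what yours buys is self-containedness and an explicit justification of the norm-scaling step that the paper leaves implicit.
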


\begin{rem}
Although this theorem provides existence and uniqueness of solutions to system \eqref{eq:syst-gen} for the special class of mass dynamics given by \eqref{eq:psi}, it can easily be adapted to general mass dynamics $\psi^{(N)}_i(x^N,m^N)$ satisfying the two conditions:
there exist $L_\psi>0$ and $C_\psi>0$ such that
for all $x^N,y^N\in (\R^d)^N$, for all $m^N,p^N\in\R^N$,
\begin{equation*}
\begin{cases}
 \|\psi^{(N)}(x^N,m^N)-\psi^{(N)}(y^N,m^N)\| \, \leq \,L_\psi \|x^N-y^N\| \\
 \|\psi^{(N)}(x^N,m^N)-\psi^{(N)}(x^N,p^N)\| \, \leq \, L_\psi \|m^N-p^N\|,
\end{cases}
\end{equation*}
where $\|\cdot\|$ denotes the standard Euclidean norm in $(\R^d)^N$ or in $\R^N$,
and for every $i\in\elts$
\begin{equation*}
\displaystyle |\psi_i( x^N, m^N)| \leq C_\psi( 1 + max_{i\in\elts} |m^N_i| ).
\end{equation*}
\end{rem}

\subsection{Convergence to the graph limit equation}

Let us now  prove the main result of this article, namely that under our assumptions, the solution to the discrete problem \eqref{eq:syst-gen} converges to the solution to the integro-differential equation \eqref{eq:GraphLimit-gen} when $N$ goes to infinity.
 For clarity purposes, we restate the main theorem announced in Section \ref{sec:presentation}.
\begin{maintheorem}
Let $x_0\in L^\infty(I;\R^d)$ and $m_0\in L^\infty(I;\R)$ satisfying \eqref{eq:integral_egal_a_1}, and consider a functional $\psi:I \times L^2(I;\R^d)\times L^2(I;\R) \rightarrow \R$.
Suppose that the function $\phi$ satisfies Hyp.~\ref{hyp:phi}, and that $\psi$ satisfies Hyp.~\ref{hyp:psi}.
Let $x^{0,N}:=\Pdn(x_0)$ and $m^{0,N}:=\Pdn(m_0)$ be given by \eqref{eq:ICx}.
Then the solution $(\txn,\tmn)$ to \eqref{eq:syst-contN} with initial conditions 
\[\displaystyle x_N(0,s) = \sum_{i=1}^N x_i^{0,N} \mathbf{1}_{[\frac{i-1}{N}, \frac{i}{N})}(s) \quad \text{ and } \quad \displaystyle m_N(0,s) = \sum_{i=1}^N m_i^{0,N} \mathbf{1}_{[\frac{i-1}{N}, \frac{i}{N})}(s)
\]
  converges when $N$ tends to infinity in the $\mathcal{C}([0,T];L^2(I))$ topology.
  More specifically, there exist $(x,m)\in \mathcal{C}([0,T];L^\infty(I,\R^d))\times \mathcal{C}([0,T];L^\infty(I,\R))$ such that 
\begin{equation}\label{eq:convxm}
\displaystyle \|x-\txn\|_{\mathcal{C}([0,T];L^2(I,\R^d))} \xrightarrow[N\rightarrow+\infty]{} 0 
\quad \text{ and } \quad 
\|m-\tmn\|_{\mathcal{C}([0,T];L^2(I,\R))} \xrightarrow[N\rightarrow+\infty]{} 0.
\end{equation}
Furthermore, the limit functions $x$ and $m$ are solutions to the integro-differential system \eqref{eq:GraphLimit-gen} supplemented by the initial conditions $x(0,\cdot) = x_0$ and $m(0,\cdot) = m_0$.
\end{maintheorem}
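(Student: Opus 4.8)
The plan is to use the well-posedness already established in Theorem~\ref{Th:GL-wellpos} to \emph{fix the candidate limit}, and then to prove convergence to it by a Gronwall stability estimate between the continuous reformulation \eqref{eq:syst-contN} of the discrete system and the graph limit system \eqref{eq:GraphLimit-gen}. Concretely, let $(x,m)\in\CLu$ be the unique solution of \eqref{eq:GraphLimit-gen} with initial data $(x_0,m_0)$ provided by Theorem~\ref{Th:GL-wellpos}; this is the only reasonable candidate, and choosing it in advance avoids any subsequence-extraction and identification argument. It then remains to show that $v_N(t):=\|\txn(t,\cdot)-x(t,\cdot)\|_{L^2(I)}^2+\|\tmn(t,\cdot)-m(t,\cdot)\|_{L^2(I)}^2$ satisfies $\sup_{[0,T]}v_N(t)\to 0$. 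Throughout I would use that both pairs are bounded in $L^\infty(I)$ \emph{uniformly in $N$}: the bounds $X_T,M_T$ of Theorem~\ref{Th:GL-wellpos} depend only on $\|x_0\|_{L^\infty},\|m_0\|_{L^\infty},C_\psi,\Lphi,T$, and since $\txn(0,\cdot)$ and $\tmn(0,\cdot)$ are cell-averages of $x_0,m_0$ their sup-norms are controlled by $\|x_0\|_{L^\infty},\|m_0\|_{L^\infty}$, so the same Gronwall bound \eqref{eq:mbound} applies to $(\txn,\tmn)$ (the cell-averaging operator being an $L^\infty$-contraction).

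For the opinion equation I would reproduce the algebra of the uniqueness part of Theorem~\ref{Th:GL-wellpos}. Writing $\txn-x$ in integral form and inserting $\pm\,m(\tau,\se)\ba(\txn(\tau,\se)-\txn(\tau,s))$, the integrand splits into a factor $(\tmn-m)(\tau,\se)$ multiplied by $\|\ba(\txn(\tau,\se)-\txn(\tau,s))\|\le 2\Lphi X_T$ (using the uniform bound on $\txn$), plus a factor $m(\tau,\se)$ (bounded by $M_T$) times a $\ba$-difference controlled by $\Lphi$ and $\|\txn-x\|$. Squaring and applying Cauchy--Schwarz and Fubini exactly as before gives
\begin{equation*}
\|\txn-x\|_{L^2(I)}^2(t)\;\lesssim\;\|\txn(0,\cdot)-x_0\|_{L^2(I)}^2+\int_0^t\big(\|\txn-x\|_{L^2(I)}^2+\|\tmn-m\|_{L^2(I)}^2\big)(\tau)\,d\tau,
\end{equation*}
with constant depending only on $T,\Lphi,X_T,M_T$.

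The mass equation is where the genuinely new term appears. Let $\Pi_N$ denote the averaging operator $f\mapsto\Pcn(\Pdn f)$, i.e. the $L^2(I)$-orthogonal projection onto functions constant on each $[\tfrac{i-1}{N},\tfrac{i}{N})$; it is an $L^2$-contraction. The source term of \eqref{eq:syst-contN} at $s$ is exactly $\big(\Pi_N\,\psi(\cdot,\txn(\tau,\cdot),\tmn(\tau,\cdot))\big)(s)$, so
\begin{equation*}
\Pi_N\psi(\cdot,\txn,\tmn)-\psi(\cdot,x,m)=\Pi_N\big[\psi(\cdot,\txn,\tmn)-\psi(\cdot,x,m)\big]+\big[\Pi_N\psi(\cdot,x,m)-\psi(\cdot,x,m)\big].
\end{equation*}
The first bracket is handled by the contraction property of $\Pi_N$ and the two Lipschitz bounds \eqref{eq:psilip2}, giving an $L^2$ norm $\le L_\psi(\|\txn-x\|_{L^2(I)}+\|\tmn-m\|_{L^2(I)})$; the second bracket is a pure \emph{consistency error} $\eta_N(\tau):=\|\Pi_N\psi(\cdot,x(\tau,\cdot),m(\tau,\cdot))-\psi(\cdot,x(\tau,\cdot),m(\tau,\cdot))\|_{L^2(I)}$ that does not involve the unknowns. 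Proceeding as in \eqref{eq:ineqmn} then yields
\begin{equation*}
\|\tmn-m\|_{L^2(I)}^2(t)\;\lesssim\;\|\tmn(0,\cdot)-m_0\|_{L^2(I)}^2+\int_0^t\big(\|\txn-x\|_{L^2(I)}^2+\|\tmn-m\|_{L^2(I)}^2\big)(\tau)\,d\tau+\int_0^T\eta_N(\tau)^2\,d\tau.
\end{equation*}

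Adding the two estimates and applying Gronwall's lemma (Remark~\ref{Rem:Gronwall}) gives $\sup_{[0,T]}v_N(t)\lesssim\big(v_N(0)+\int_0^T\eta_N^2\big)e^{CT}$, so it remains to prove that the data error $v_N(0)$ and the consistency error $\int_0^T\eta_N^2$ both vanish as $N\to\infty$. The data error goes to zero because $\txn(0,\cdot)=\Pi_N x_0\to x_0$ and $\tmn(0,\cdot)=\Pi_N m_0\to m_0$ in $L^2(I)$ (Remark~\ref{Rem:x0m0} combined with the uniform $L^\infty$ bounds and dominated convergence). I expect the control of $\int_0^T\eta_N^2$ to be the main obstacle and the only step requiring real care: for each fixed $\tau$ one has $\Pi_N g(\tau)\to g(\tau)$ in $L^2(I)$ for $g(\tau):=\psi(\cdot,x(\tau,\cdot),m(\tau,\cdot))\in L^2(I)$ (density of piecewise-constant functions plus the uniform contraction bound on $\Pi_N$), hence $\eta_N(\tau)\to 0$ pointwise in $\tau$; since the sublinearity \eqref{eq:psisublin} together with the uniform bound $\|m(\tau,\cdot)\|_{L^\infty}\le M_T$ makes $\eta_N(\tau)^2\le 4\|g(\tau)\|_{L^2(I)}^2$ uniformly bounded, dominated convergence gives $\int_0^T\eta_N(\tau)^2\,d\tau\to 0$. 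This forces $\sup_{[0,T]}v_N\to 0$, which is exactly \eqref{eq:convxm}; the limit $(x,m)$ solves \eqref{eq:GraphLimit-gen} with data $(x_0,m_0)$ and lies in $\CLu\subset\mathcal{C}([0,T];L^\infty(I))$ by construction, completing the proof.
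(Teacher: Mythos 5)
Your proposal is correct and follows essentially the same route as the paper: fix the limit $(x,m)$ via the well-posedness theorem, derive a Gronwall stability estimate for $\|\txn-x\|_{L^2}^2+\|\tmn-m\|_{L^2}^2$ by splitting each error into a Lipschitz-stability part and a consistency part (your $\eta_N$ is exactly the paper's $g_N$, handled there by Lebesgue differentiation plus domination rather than by density of piecewise constants), and conclude from the vanishing of the initial and consistency errors. The only cosmetic differences are which factor you insert in the $\pm$ trick for the opinion equation and that you run Gronwall directly on the squared quantities instead of the paper's $\epsilon$-regularized square roots.
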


\begin{rem}
Theorem \ref{Th:GraphLimit} proves the convergence of the solution to \eqref{eq:syst-contN} to the solution to \eqref{eq:GraphLimit-gen}. This is equivalent to proving the convergence of the solution to the discrete system \eqref{eq:syst-gen} to the solution to \eqref{eq:GraphLimit-gen}, as shown in Proposition \ref{prop:equiv}. 
\end{rem}

\begin{proof}
The proof is done following the graph limit method  of \cite{Medvedev14}.
Let $\xin:= \txn - x$ and $\zn := \tmn -m$. 
We will also use the slight abuse of notation $y(t,\cdot):=y(t)$, with $y$ standing for $x$, $\txn$, $m$ or $\tmn$.
We compute 
\begin{equation*}
\begin{split}
\frac{\partial\xin(t,s)}{\partial t} = & \int_I \tmn(t,\se)\, \ba(\txn(t,\se)-\txn(t,s)) \, d\se - \int_I m(t,\se)\, \ba(x(t,\se)-x(t,s)) \, d\se  \\
 = & \int_I \tmn(t,\se)\, \left[ \ba(\txn(t,\se)-\txn(t,s))- \ba(x(t,\se)-x(t,s)) \right] \, d\se \\
& +  \int_I (\tmn(t,\se)- m(t,\se)) \, \ba(x(t,\se)-x(t,s)) \, d\se.
\end{split}
\end{equation*}
By multiplying by $\xin$ and integrating over $I$, we obtain
\begin{equation}\label{eq:dxi}
\begin{split}
\frac{1}{2}\int_I \frac{\partial\xin(t,s)^2}{\partial t} ds
 = & \int_{I^2} \tmn(t,\se)\, \left[ \ba(\txn(t,\se)-\txn(t,s))- \ba(x(t,\se)-x(t,s))  \right] \xin(t,s) \, d\se\, ds \\
& +  \int_{I^2} \zn(t,\se) \xin(t,s) \, \ba(x(t,\se)-x(t,s)) \, d\se\, ds.
\end{split}
\end{equation}
We study the first term. Since the solution to \eqref{eq:syst-gen} satisfies \eqref{eq:bornex}-\eqref{eq:bornem}, we have 
\begin{equation}
\label{eq:bornemN}
\sup_{t \in [0,T]} \esssup_{s \in I} |m_N(t,s)| \leq \overline{M}.
\end{equation}
Then, since $\ba$ is Lipschitz, there exists $L>0$ such that
\begin{equation*}
\begin{split}
 & \left | \int_{I^2} \tmn(t,\se)\, \left[ \ba(\txn(t,\se)-\txn(t,s))- \ba(x(t,\se)-x(t,s)) 
  \right]  \xin(t,s)\, d\se\, ds \right| \\ \leq \;& \overline{M} L \int_{I^2} |\xin(t,s)| \,\left| \txn(t,\se)-\txn(t,s)- x(t,\se)+x(t,s) \right| \, d\se\, ds \\
\leq \;& \overline{M} L \int_{I^2} |\xin(t,s)| \,\left| \xin(t,\se) - \xin(t,s) \right| \, d\se\, ds  \leq 2 \overline{M} L \|\xin(t)\|^2_{L^2(I)} 
\end{split}
\end{equation*}
We now look at the second term of \eqref{eq:dxi}. 
Since $x \in \mathcal{C}([0,T];L^\infty(I;\R^d))$ and $\ba$ is continuous, there exists a constant $\displaystyle{C_\phi:={\ess\sup}_{(s,\se,t)\in I^2\times [0,T]}|\ba(x(t,\se)-x(t,s))| }$ which is finite, and we have the following bound:
\begin{equation*}
\begin{split}
& \int_{I^2} \zn(t,\se) \xin(t,s) \, \ba(x(t,\se)-x(t,s)) \, d\se\, ds 
\leq \; C_\phi \; \int_{I^2} |\zn(t,\se) \xin(t,s)| d\se\, ds \\
\leq & \; C_\phi \; \|\xin(t)\|_{L^1(I)}\, \|\zn(t)\|_{L^1(I)} \; \leq \;  C_\phi \; \|\xin(t)\|_{L^2(I)}\, \|\zn(t)\|_{L^2(I)}.
\end{split}
\end{equation*}
Hence from \eqref{eq:dxi}, 
\begin{equation}\label{eq:dxi2}
\frac{1}{2}\frac{d}{dt}\|\xin(t)\|_{L^2(I)}^2 \, \leq \, 2 \overline{M} L \|\xin(t)\|^2_{L^2(I)} +  C_\phi \; \|\xin(t)\|_{L^2(I)}\, \|\zn(t)\|_{L^2(I)}.
\end{equation}
We now compute 
\begin{equation*}
\begin{split}
\frac{\partial\zn(t,s)}{\partial t} = & N \int_{\frac{1}{N}\floor{sN}}^{\frac{1}{N}(\floor{sN}+1)} \psi(\se,\txn(t),\tmn(t)) \,d\se - \psi(s,x(t),m(t))\\
= & N \int_{\frac{1}{N}\floor{sN}}^{\frac{1}{N}(\floor{sN}+1)} \left[ \psi(\se,\txn(t),\tmn(t)) - \psi(\se,x(t),m(t)) \right] \,d\se  \\
& + N \int_{\frac{1}{N}\floor{sN}}^{\frac{1}{N}(\floor{sN}+1)}  \psi(\se,x(t),m(t)  \,d\se  - \psi(s,x(t),m(t)).
\end{split}
\end{equation*}
Multiplying by $\zn(t,s)$ and integrating over $I$, we get: 
\begin{equation}\label{eq:dzn}
\begin{split}
\frac{1}{2}\int_I \frac{\partial\zn(t,s)^2}{\partial t} \, ds = & \int_I N \int_{\frac{1}{N}\floor{sN}}^{\frac{1}{N}(\floor{sN}+1)}  \left[ \psi(\se,\txn(t),\tmn(t)) - \psi(\se,x(t),m(t)) \right] d\se\,  \zn(t,s) \, ds  \\
& + \int_I\left[ N \int_{\frac{1}{N}\floor{sN}}^{\frac{1}{N}(\floor{sN}+1)}  \psi(\se,x(t),m(t))  \,d\se  - \psi(s,x(t),m(t)) \right] \zn(t,s)\, ds \\
\leq & \; \|h_N(t)\|_{L^2(I)} \|\zn(t)\|_{L^2(I)} \; + \; \|g_N(t)\|_{L^2(I)} \, \|\zn(t)\|_{L^2(I)} ,
\end{split}
\end{equation}
where the last inequality was obtained from the Cauchy-Schwarz inequality, and we denoted by  {$h_N$ and $g_N$} the functions 
$$
h_N:(t,s) \mapsto N \int_{\frac{1}{N}\floor{sN}}^{\frac{1}{N}(\floor{sN}+1)}  \left[ \psi(\se,\txn(t),\tmn(t)) - \psi(\se,x(t),m(t)) \right] d\se$$
and 
$$
g_N:(t,s) \mapsto N \int_{\frac{1}{N}\floor{sN}}^{\frac{1}{N}(\floor{sN}+1)}  \psi(\se,x(t),m(t))  \,d\se  - \psi(s,x(t),m(t)).
$$
We start with the first term:
\begin{equation*}
\begin{split}
\|h_N(t)\|_{L^2(I)}^2 & =  \int_I \left( N \int_{\frac{1}{N}\floor{sN}}^{\frac{1}{N}(\floor{sN}+1)}  \left[ \psi(\se,\txn(t),\tmn(t)) - \psi(\se,x(t),m(t)) \right] d\se\, \right)^2ds \\
& = \sum_{i=1}^N\, \int_{\frac{i-1}{N}}^{\frac{i}{N}} N^2 \left(  \int_{\frac{i-1}{N}}^{\frac{i}{N}}  \left[ \psi(\se,\txn(t),\tmn(t)) - \psi(\se,x(t),m(t)) \right] d\se\, \right)^2ds \\
& = \sum_{i=1}^N\,  N \left(  \int_{\frac{i-1}{N}}^{\frac{i}{N}}  \left[ \psi(\se,\txn(t),\tmn(t)) - \psi(\se,x(t),m(t)) \right]\cdot 1 \, d\se\, \right)^2 \\
& \leq \sum_{i=1}^N\,  N \left( \left[ \int_{\frac{i-1}{N}}^{\frac{i}{N}}  \left[ \psi(\se,\txn(t),\tmn(t)) - \psi(\se,x(t),m(t)) \right]^2 d\se\,\right]^{1/2} \left[ \int_{\frac{i-1}{N}}^{\frac{i}{N}} 1^2  d\se\,\right]^{1/2}\right)^2  \\
& \leq \sum_{i=1}^N\,   \int_{\frac{i-1}{N}}^{\frac{i}{N}} \left[ \psi(\se,\txn(t),\tmn(t)) - \psi(\se,x(t),m(t)) \right]^2 d\se  \\
& = \|  \psi(\cdot,\txn(t),\tmn(t)) - \psi(\cdot,x(t),m(t)) \|_{L^2(I)}^2,
\end{split}
\end{equation*}
using the Cauchy Schwarz inequality.
Then, from \eqref{eq:psilip2}, 
\begin{equation*}
\begin{split}
\|h_N(t)\|_{L^2(I)} & \leq \|  \psi(\cdot,\txn(t),\tmn(t)) - \psi(\cdot,x(t),\tmn(t)) \|_{L^2(I)} + \| \psi(\cdot,x(t),\tmn(t)) - \psi(\cdot,x(t),m(t)) \|_{L^2(I)} \\
& \leq L_\psi   ( \|  \txn(t)-x(t) \|_{L^2(I)} + \| \tmn(t)-m(t) \|_{L^2(I)}) = L_\psi   ( \|  \xin(t) \|_{L^2(I)} + \| \zn(t) \|_{L^2(I)}).
\end{split}
\end{equation*}
We now study the second term of \eqref{eq:dzn}.
According to Lebesgue's differentiation theorem, for almost every $s\in I$,
$$
\lim_{N\rightarrow+\infty} N \int_{\frac{1}{N}\floor{sN}}^{\frac{1}{N}(\floor{sN}+1)} \psi(\se,x(t),m(t))  \,d\se  = \psi(s,x(t),m(t)). 
$$
which implies 
\begin{equation}\label{eq:h2}
\lim_{N\rightarrow+\infty} \|g_N(t)\|_{L^2(I)} = 0.
\end{equation}
Summing up the contributions of $h_N$ and $g_N$, from \eqref{eq:dzn} we obtain: 
\begin{equation}\label{eq:dzn2}
\frac{1}{2}\frac{d}{dt}\|\zn(t)\|_{L^2(I)}^2 \, \leq \, L_\psi   \left( \|  \xin(t) \|_{L^2(I)} + \| \zn(t) \|_{L^2(I)}  \right) \|\zn(t)\|_{L^2(I)} +  \|g_N(t)\|_{L^2(I)} \|\zn(t)\|_{L^2(I)}.
\end{equation}
Now, for all $\epsilon >0$ and $t\in[0,T]$, let $\bzn(t):=\sqrt{\|\zn(t)\|_{L^2(I)}^2+\epsilon}$ and $\bxin(t):=\sqrt{\|\xin(t)\|_{L^2(I)}^2+\epsilon}$. 
From \eqref{eq:dxi2} and \eqref{eq:dzn2},
\begin{equation*}
\begin{cases}
\frac{1}{2}\frac{d}{dt}(\bxin(t)^2) \, \leq \, 2 L \overline{M}  \, \bxin(t)^2 +  C_\phi \; \bxin(t) \, \bzn(t)\\
\frac{1}{2}\frac{d}{dt}(\bzn(t)^2) \, \leq \, L_\psi   \left( \bxin(t) + \bzn(t) \right) \bzn(t) +  \|g_N(t)\|_{L^2(I)} \bzn(t),
\end{cases}
\end{equation*}
and since for all $t\in [0,T]$, $\bxin(t)>0$ and $\bzn(t)>0$, this implies:
\begin{equation*}
\begin{cases}
\frac{d}{dt}\bxin(t) \, \leq \, 2 L \overline{M}  \, \bxin(t) +  C_\phi  \, \bzn(t)\\
\frac{d}{dt}\bzn(t) \, \leq \, L_\psi   \left( \bxin(t) + \bzn(t) \right) +  \|g_N(t)\|_{L^2(I)} .
\end{cases}
\end{equation*}
Summing up, we get
$$
\frac{d}{dt}(\bxin(t)+\bzn(t)) \, \leq \, K (\bxin(t)+\bzn(t)) + \|g_N(t)\|_{L^2(I)} 
$$
where $K:=\max\{ 2 L \overline{M} ,\, C_\phi,\, L_\psi\}$. Now, from Gronwall's inequality, for all $t\in [0,T]$,
$$
\bxin(t)+\bzn(t) \, \leq \, (\bxin(0)+\bzn(0))e^{Kt} + \int_0^t  \|g_N(\tau)\|_{L^2(I)}e^{K(t-\tau)}d\tau.
$$
Since $\epsilon$ is arbitrary, we obtain:
$$
\sup_{t\in [0,T]} \left(\|\xin(t)\|_{L^2(I)}+\|\zn(t)\|_{L^2(I)} \right) \leq \left(\|\xin(0)\|_{L^2(I)}+\|\zn(0)\|_{L^2(I)}+\int_0^T  \|g_N(\tau)\|_{L^2(I)}d\tau \right) e^{KT}.
$$
As seen in Remark \ref{Rem:x0m0}, $\lim_{N\rightarrow +\infty} \xin(0,s) = \lim_{N\rightarrow +\infty} \txn(0,s)-x(0,s) = 0$ and 
$\lim_{N\rightarrow +\infty} \zn(0,s) = \lim_{N\rightarrow +\infty} \tmn(0,s)-m(0,s) = 0$ for almost all $s\in I$, which implies 
that $\lim_{N\rightarrow +\infty}\|\xin(0)\|_{L^2(I)}=0$ and $\lim_{N\rightarrow +\infty}\|\zn(0)\|_{L^2(I)}=0$.
From \eqref{eq:h2}, for all $t$, $\lim_{N\rightarrow +\infty}\|g_N(t)\|_{L^2(I)}=0$, so using the dominated convergence theorem for the last term, we can finally deduce the convergence result \eqref{eq:convxm}.
\end{proof}

\section{Relation between Graph Limit and Mean-field Limit}
\label{sec:mfl}

In Section \ref{sec:graphlim}, we have derived the \emph{Graph Limit} of the microscopic model \eqref{eq:syst-gen} when the number of agents $N$ goes to infinity, showing that the limit functions representing the opinions and weights satisfy a system of integro-differential equations. 

The aim of this section is to relate the \emph{Graph Limit} that we obtained with the \emph{Mean-Field Limit}, much more studied in the field of collective dynamics. 
However, the Mean-Field Limit can only be derived for a particular form of mass dynamics that satisfy an \emph{indistinguishability} property. We begin by shedding light on this concept. 

\subsection{Indistinguishability and mean-field limit}
\label{sec:indisting}

In the context of the classical Hegselmann-Krause model without weights \eqref{eq:HK}, the Mean-Field Limit process consists of representing the population by its density rather than by a collection of individual opinions. 
The limit density $\nu(t,x)$ represents the (normalized) quantity of agents with opinion $x$ at time $t$ and satisfies a non-local transport equation, where the transport vector field $V$ is defined by the convolution of $\nu$ with the interaction function $\phi$. The proof of the limit lies on the fact that the empirical measure 
\begin{equation*}
\nu^N(t,x) := \frac{1}{N} \sum_{i=1}^N \delta(x-x^{N}_i(t)).
\end{equation*}
satisfies the very same transport equation. It is crucial to notice that there is an irretrievable information loss in this formalism. Indeed the empirical measure keeps a count of the number of agents with opinion $x$ at time $t$, but loses track of the individual labeling of agents (i.e. the indices). 

In the case of our augmented system \eqref{eq:syst-gen} with time-evolving weights, we generalize the notion of empirical measure by defining 
\begin{equation}\label{eq:empmes}
\mu^N(t,x) := \frac{1}{N} \sum_{i=1}^N m^{N}_i(t) \delta(x-x^{N}_i(t)).
\end{equation}
We stress once again the information loss: the empirical measure only keeps track of the total weight of the group of agents with opinion $x$, but loses track of the individual labeling, the individual weights, and the number of agents at each point $x$.

More specifically, we draw attention to the fact that the empirical measure is invariant :
\begin{enumerate}[(i)]
\item by relabeling of the indices: for every $(x^N,m^N)\in (\R^d)^N\times\R^N$, for any $\sigma$ permutation of $\elts$, 
$$
\frac{1}{N} \sum_{i=1}^N m^{N}_i \delta(x-x^{N}_i)=\frac{1}{N} \sum_{i=1}^N m^{N}_{\sigma(i)} \delta(x-x^{N}_{\sigma(i)})
$$
\item by grouping of the agents: for every $(x^N,m^N)\in (\R^d)^N\times\R^N$, for every $I\subset\elts$, such that $x^N_i = x_I$ for all $i\in I$,
$$
\frac{1}{N} \sum_{i=1}^N m^{N}_i \delta(x-x^{N}_i) = \frac{1}{N} \left[ \left(\sum_{i\in I} m^{N}_{i}\right) \delta(x-x_{I}) + \sum_{i\in\elts\setminus I} m^{N}_i \delta(x-x^{N}_i) \right].
$$
\end{enumerate}
Figure \ref{fig:Indist} illustrates this invariance by comparing two microscopic systems corresponding to the same empirical measure.
This illustrates the fact that contrarily to the graph limit seen in the previous section, the mean-field limit process entails a non-reversible information loss. The empirical measure only retains the information concerning the total mass of agents at each point, and is incapable of differentiating between differently-labeled agents or between agents grouped at the same point. 

\begin{figure}[h!]
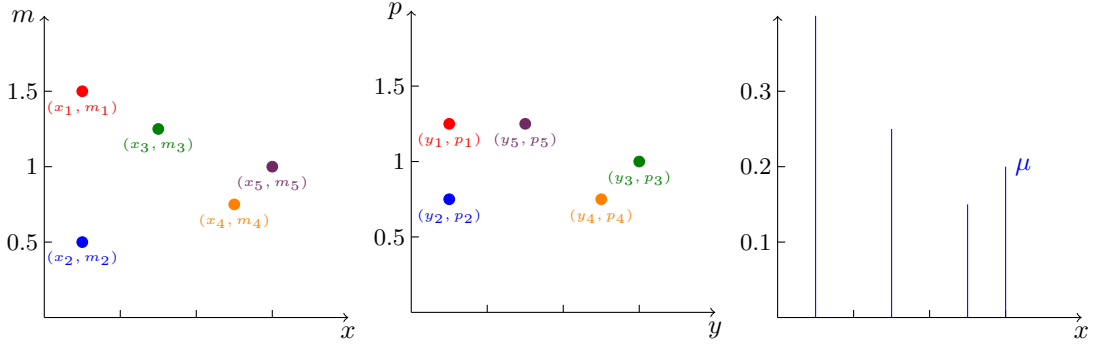

\begin{center}
\begin{tikzpicture}[scale=1]
\input{Micro_xm_1}
\end{tikzpicture}%
\begin{tikzpicture}[scale=1]
\input{Micro_xm_2}
\end{tikzpicture}%
\begin{tikzpicture}[scale=1]
\draw[->,] (0,0) -- (0,4);
\draw[->,] (0,0) -- (4,0);
\draw[] (4,0) node[below] {$x$};
\draw[blue] (3,2) node[right] {$\mu$};
\draw[-,] (1,0) -- (1,0.1);
\draw[-,] (2,0) -- (2,0.1);
\draw[-,] (3,0) -- (3,0.1);
\draw[-,] (0,1) -- (0.1,1) node[left] {\small $0.1\,$};
\draw[-,] (0,2) -- (0.1,2) node[left] {\small$0.2\,$};
\draw[-,] (0,3) -- (0.1,3) node[left] {\small$0.3\,$};

\draw[-,blue] (0.5,0) -- (0.5,4);
\draw[-,blue] (1.5,0) -- (1.5,2.5);
\draw[-,blue] (2.5,0) -- (2.5,1.5);
\draw[-,blue] (3,0) -- (3,2);
\end{tikzpicture}%
\end{center}
\caption{Schematic representation of two microscopic sets of agents $(x^5,m^5)\in\R^5\times \R^5$ and $(y^5,p^5)\in\R^5\times \R^5$ corresponding to the same empirical measure $\mu^5\in\mathcal{P}(\R)$. Left: Representation of $(x^5,m^5)$ with $x^5 = (0.5, 0.5, 1.5, 2.5, 3 )$ and $m^5 = (1.5,0.5,1.25,0.75,1)$. Center: Representation of $(y^5,p^5)$ with $y^5 = (0.5, 0.5, 3, 2.5,1.5 )$ and $p^5 = (1.25,0.75,1,0.75,1.25)$. Right: Symbolic representation of the empirical measure  $\mu^5 = \frac{1}{5}( 2\,\delta_{0.5} + 1.25 \,\delta_{1.5} + 0.75 \,\delta_{2.5}+ \,\delta_{3})$.}\label{fig:Indist}
\end{figure}

Hence, in order to study the mean-field limit, we will require System \eqref{eq:syst-gen} to satisfy the following \textit{indistinguishability} property:
\begin{definition}\label{Def:indist}
We say that system \eqref{eq:syst-gen} satisfies \emph{indistinguishability} if
for all $J\subset \elts$, 
for all initial conditions $(x^0, m^0)\in \R^{dN}\times\R^N$ and $(y^0, p^0)\in \R^{dN}\times\R^N$ satisfying 
\begin{equation}\label{eq:prop-indist_0}
\begin{cases}
x_i^0 = y_i^0 = x_j^0 = y_j^0 \qquad \text{ for all } (i,j)\in J^2 \\
x_i^0 = y_i^0 \qquad \text{ for all } i\in \elts \\
m_i^0 = p_i^0 \qquad \text{ for all } i\in J^c \\
\sum_{i\in I} m_i^0 = \sum_{i\in I} p_i^0,
\end{cases}
\end{equation}
the solutions $t\mapsto (x(t),m(t))$ and $t\mapsto (y(t),p(t))$ to system \eqref{eq:syst-gen} with respective initial conditions
$(x^0, m^0)$ and $(y^0, p^0)$ satisfy for all $t\geq 0$,
\begin{equation*}
\begin{cases}
x_i(t) = y_i(t) = x_j(t) = y_j(t) \qquad \text{ for all } (i,j)\in J^2 \\
x_i(t) = y_i(t) \qquad \text{ for all } i\in \elts \\
m_i(t) = p_i(t) \qquad \text{ for all } i\in J^c \\
\sum_{i\in I} m_i(t) = \sum_{i\in I} p_i(t).
\end{cases}
\end{equation*}
\end{definition}
In the above definition and in all that follows, $J^c$ denotes the complement of the set $J$ in $\elts$, i.e. $J^c=\elts\setminus J$. 

We begin by noticing that part of the required property is automatically satisfied by the general system \eqref{eq:syst-gen}, without having to specify further the mass dynamics. Namely, we prove that if two agents initially start at the same position, they stay with the same position at all time.

\begin{prop}\label{Prop:equal}
Let $(x^{0,N}_i)_{i\in\elts}\in\R^{dN}$ and $(m^{0,N}_i)_{i\in\elts}\in\R^{N}$. Let $\psi_i^{(N)}:(\R^d)\times\R\rightarrow\R$ and let $\phi\in\Lip(\R^d;\R)$ such that the system of ODE \eqref{eq:syst-gen} with initial condition $\left( x^{0,N},m^{0,N}\right)$ admits a unique solution $(x^N,m^N)$.
If $x_k^{0,N}=x_l^{0,N}$ for some $(k,l)\in \elts^2$, then it holds
$x_k^N(t) = x_l^N(t)$ for all $t\geq 0$.
\end{prop}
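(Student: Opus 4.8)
The statement asserts that two agents starting at the same opinion remain together for all time, regardless of the mass dynamics. The plan is to exploit the structure of the position equation in \eqref{eq:syst-gen}: the right-hand side governing $x_k^N$ and $x_l^N$ differs only through the appearance of $x_k^N$ (resp. $x_l^N$) inside $\phi$, while the summation over $j$ is identical for both agents. Concretely, I would define the difference $\delta(t) := x_k^N(t) - x_l^N(t)$ and compute its time derivative directly from the system.

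First I would write, using \eqref{eq:syst-gen},
\begin{equation*}
\frac{d}{dt}\delta(t) = \frac{1}{N}\sumj m_j^N(t)\left[\phi(x_j^N(t)-x_k^N(t)) - \phi(x_j^N(t)-x_l^N(t))\right].
\end{equation*}
Next I would estimate the norm of this difference using the Lipschitz property of $\phi$ from Hyp.~\ref{hyp:phi}: since $\|\phi(a)-\phi(b)\| \leq L_\phi \|a-b\|$ and $(x_j^N - x_k^N) - (x_j^N - x_l^N) = x_l^N - x_k^N = -\delta$, each summand is bounded by $L_\phi |m_j^N(t)|\,\|\delta(t)\|$. Using the uniform bound $|m_j^N(t)| \leq \overline{M}$ on the weights (available from the well-posedness theorem for the microscopic system, \eqref{eq:bornem}), I would obtain a differential inequality of the form
\begin{equation*}
\frac{d}{dt}\|\delta(t)\| \leq L_\phi \,\overline{M}\, \|\delta(t)\|,
\end{equation*}
valid for all $t \in [0,T]$. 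Since $\delta(0) = x_k^{0,N} - x_l^{0,N} = 0$ by hypothesis, Gronwall's lemma (as recalled in Remark~\ref{Rem:Gronwall}) immediately yields $\|\delta(t)\| = 0$ for all $t\geq 0$, i.e. $x_k^N(t) = x_l^N(t)$.

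I expect the argument to be essentially routine, and the only point requiring mild care is the justification of the differential inequality at the level of $\|\delta\|$ rather than $\|\delta\|^2$: differentiating the Euclidean norm is problematic where $\delta$ vanishes, which is precisely our situation. The cleanest route is to work instead with $\frac{1}{2}\frac{d}{dt}\|\delta(t)\|^2 = \langle \delta(t), \dot\delta(t)\rangle \leq L_\phi \overline{M}\,\|\delta(t)\|^2$ and apply Gronwall to $\|\delta\|^2$, concluding $\|\delta(t)\|^2 \equiv 0$; this avoids any differentiability issue and uses only the boundedness of the weights and the Lipschitz continuity of $\phi$. Note that the weight dynamics $\psi_i^{(N)}$ play no role whatsoever in this estimate — consistent with the remark preceding the proposition that this part of indistinguishability holds for the general system without further assumptions on the mass dynamics.
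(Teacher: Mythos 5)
Your proof is correct, but it follows a genuinely different route from the paper's. The paper argues by a uniqueness trick: it introduces the auxiliary system \eqref{eq:syst-mod12} in which the two coinciding agents no longer interact with each other, observes that in that system both agents obey literally the same ODE with the same initial datum (hence coincide for all time), and then checks that this auxiliary solution also solves the original system \eqref{eq:syst-gen} because the suppressed interaction terms vanish when the two positions agree; uniqueness for \eqref{eq:syst-gen} then forces the original solution to have $x_k^N\equiv x_l^N$. Your argument instead runs a quantitative Gronwall estimate on $\|x_k^N-x_l^N\|^2$, which is arguably more direct, sidesteps the construction of an auxiliary system, and does not even use $\phi(0)=0$ (the paper's route implicitly needs the self-interaction and mutual-interaction terms to vanish on the diagonal). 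The trade-off is that your route needs a bound on the weights, and here you should be slightly careful: the bound \eqref{eq:bornem} is established under Hypothesis \ref{hyp:psi}, whereas Proposition \ref{Prop:equal} is stated for arbitrary $\psi_i^{(N)}$ subject only to the existence of a unique solution. This is not a real gap — since $m^N\in\mathcal{C}^1$, the weights are bounded on every compact interval $[0,T]$, and running your estimate with $M_T:=\sup_{[0,T]}\max_j|m_j^N|$ in place of $\overline{M}$ gives the conclusion on $[0,T]$ for every $T$ — but the appeal to \eqref{eq:bornem} should be replaced by this local-boundedness observation to match the generality of the statement. Your handling of the non-differentiability of $\|\delta\|$ at zero by working with $\|\delta\|^2$ is the right fix.
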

\begin{proof}
Let $(x^0_i)_{i\in\elts}\in\R^{dN}$ and $(m^0_i)_{i\in\elts}\in\R^{N}$
Without loss of generality, suppose that $x_1^0 = x_2^0$.
Now consider the slightly modified differential system 
\begin{equation}\label{eq:syst-mod12}
\begin{cases}
\displaystyle \frac{d}{dt} x_i= \frac{1}{N} \sum_{j=3}^N m_j a(\|x_i-x_j\|) (x_j-x_i), \qquad x_i(0) = x_i^0, \qquad \text{ for } i=1,2 \\
\displaystyle \frac{d}{dt} x_i = \frac{1}{N} \sumj m_j a(\|x_i-x_j\|) (x_j-x_i), \qquad x_i(0) = x_i^0, \qquad \text{ for all } i\in\{3,\cdots,N\} \\
\displaystyle \frac{d}{dt}  m_i = \psi_i^N(x,m), \qquad m_i(0) = m_i^0, \qquad \text{ for all } i\in\elts.
\end{cases}
\end{equation}
System \eqref{eq:syst-mod12} has a unique solution, that we denote by $(\tx,\tm)$.
Notice that $\tx_1$ and $\tx_2$ satisfy the same differential equation, so since $x_1^0=x_2^0$, we have $\tx_1(t) = \tx_2(t)$ for all $t\geq 0$.
Furthermore, one easily sees that $(\tx, \tm)$ is also solution to \eqref{eq:syst-gen}. By uniqueness, we conclude that the unique solution $(x,m)$ to \eqref{eq:syst-gen} satisfies $x_1(t) = x_2(t)$ for all $t\geq 0$.
\end{proof}

Thus, if System \eqref{eq:syst-gen} is well-posed, part of the requirements for indistinguishability stated in Def. \ref{Def:indist} are automatically met. However, one can easily show that for general weight dynamics $\psi^{(N)}$, System \eqref{eq:syst-gen} does not satisfy indistinguishability.

For this reason, from here onward, we will focus on a particular class of weight dynamics given by
\begin{equation}
\label{eq:psi_i_gen}
\psi_i^{(N)}(x,m) = m_i(t)\frac{1}{N^k} \sum_{j_1=1}^N \cdots \sum_{j_k=1}^N m_{j_1}(t)\cdots m_{j_k}(t) S(x_i(t), x_{j_1}(t), \cdots x_{j_k}(t)),
\end{equation} 
where $k\in\N$ and $S:(\R^d)^{k+1}\rightarrow \R$ satisfies the following assumptions: 
\begin{hyp}\label{hyp:S}
 $S\in C((\R^d)^{k+1}; \R)$ is globally bounded and Lipschitz. More specifically, there exist $\bS$, $L_S>0$ s.t. 
\begin{equation*}\label{eq:psiSkbound}
 \forall y\in(\R^d)^{k+1}, \quad |S(y)|\leq \bS.
\end{equation*}
and  
\begin{equation*}\label{eq:psiSklip}
 \forall y\in (\R^d)^{k+1}, \forall z\in (\R^d)^{k+1}, | S(y_0,\cdots,y_k) - S(z_0,\cdots,z_k) | \leq L_S \sum_{i=0}^k |y_i-z_i|.
\end{equation*}
Furthermore, we require that $S$ satisfy the following skew-symmetry property: there exists $(i,j)\in \{0,\cdots,k\}^2$ such that for all $y\in(\R^d)^{k+1}$, 
\begin{equation}\label{eq:condS}
  S(y_0,\cdots, y_i,\cdots,y_j,\cdots,y_k) =-S(y_0,\cdots, y_j,\cdots,y_i,\cdots,y_k) .
\end{equation}
\end{hyp}

 We show that with the weight dynamics given by \eqref{eq:psi_i_gen} and Hyp. \ref{hyp:S}, 
 System \eqref{eq:syst-gen} satisfies the indistinguishability property of Def. \ref{Def:indist}.

\begin{prop}
Let $S\in C((\R^d)^{k+1}; \R)$ satisfy Hyp. \ref{hyp:S}, and consider the collective dynamics system \eqref{eq:syst-gen}, with mass dynamics given by \eqref{eq:psi_i_gen}.
Then the system satisfies the indistinguishability property of Def. \ref{Def:indist}.
\end{prop}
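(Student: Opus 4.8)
The plan is to project both trajectories onto a common lower-dimensional system and conclude by uniqueness. Throughout, write $J^c = \elts\setminus J$. First I would apply Proposition \ref{Prop:equal} to each of the two systems separately: since $x_i^0=x_j^0$ and $y_i^0=y_j^0$ for all $(i,j)\in J^2$, the agents indexed by $J$ remain clustered, so there exist functions $x_J(t)$ and $y_J(t)$ with $x_i(t)=x_J(t)$ and $y_i(t)=y_J(t)$ for every $i\in J$ and every $t\geq 0$. This already yields the first line of the conclusion of Definition \ref{Def:indist}, and it lets me introduce the aggregate weights $\bar m_J(t):=\sum_{i\in J}m_i(t)$ and $\bar p_J(t):=\sum_{i\in J}p_i(t)$.

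The key structural observation is that the dynamics \emph{close} on the reduced set of variables $\big(x_J,\,(x_j)_{j\in J^c},\,\bar m_J,\,(m_j)_{j\in J^c}\big)$. Indeed, in the position equation of \eqref{eq:syst-gen}, splitting $\sumj$ over $J$ and $J^c$ and using $x_j=x_J$ for $j\in J$ gives
\[
\frac{d}{dt}x_i=\frac1N\Big[\bar m_J\,\phi(x_J-x_i)+\sum_{j\in J^c}m_j\,\phi(x_j-x_i)\Big],
\]
so the weights inside $J$ enter only through $\bar m_J$. Likewise, since $S$ in \eqref{eq:psi_i_gen} depends only on the positions, every summation index $j_\ell$ lying in $J$ contributes its weight at the common position $x_J$; carrying out the sums over the $J$-indices replaces each such factor by $\bar m_J$ and leaves an expression depending only on the reduced variables. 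Summing the weight equation over $i\in J$ (where $x_i=x_J$) and factoring out $\bar m_J$ shows that $\frac{d}{dt}\bar m_J$, as well as $\frac{d}{dt}m_j$ for $j\in J^c$, are governed by the same reduced vector field. I would write this field explicitly as a map $F$ of the reduced variables alone, and verify the identical reduction for $(y,p)$.

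The boundedness and Lipschitz continuity of $S$ in Hyp.~\ref{hyp:S}, together with Hyp.~\ref{hyp:phi} and the a priori bounds \eqref{eq:bornex}--\eqref{eq:bornem}, make $F$ Lipschitz on the relevant bounded region, so the reduced ODE has a unique solution on $[0,T]$. Now the relations \eqref{eq:prop-indist_0} say exactly that the reduced data of $(x^0,m^0)$ and of $(y^0,p^0)$ coincide: $x_J(0)=y_J(0)$ and $x_j^0=y_j^0$ for $j\in J^c$ (from $x_i^0=y_i^0$), $\bar m_J(0)=\bar p_J(0)$ (from $\sum_{i\in J}m_i^0=\sum_{i\in J}p_i^0$), and $m_j^0=p_j^0$ for $j\in J^c$. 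Both $(x,m)$ and $(y,p)$ project onto solutions of $\dot\xi=F(\xi)$ with this same datum, so by uniqueness their reduced variables agree for all $t$. Reading this off gives $x_j(t)=y_j(t)$ and $m_j(t)=p_j(t)$ on $J^c$, then $\sum_{i\in J}m_i(t)=\bar m_J(t)=\bar p_J(t)=\sum_{i\in J}p_i(t)$, and finally, combined with the first step, $x_i(t)=x_J(t)=y_J(t)=y_i(t)$ for $i\in J$ --- i.e. all four conclusions of Definition \ref{Def:indist}.

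I expect the main obstacle to be the closure verification: one must track the multi-index sum in \eqref{eq:psi_i_gen} and check that collapsing the $J$-indices produces exactly the factors $\bar m_J$, which hinges on $S$ being a function of positions only and not of the individual weights. The secondary technical point is the uniqueness of the reduced system, for which the a priori bounds on the weights --- needed to tame the multilinear weight terms and render $F$ Lipschitz --- are essential. I note that the skew-symmetry \eqref{eq:condS} of $S$ plays no role in this argument (it serves to conserve the total mass elsewhere); only the boundedness and Lipschitz parts of Hyp.~\ref{hyp:S} are used here.
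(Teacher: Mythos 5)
Your proposal is correct and follows essentially the same route as the paper: the paper also first invokes Proposition \ref{Prop:equal} to collapse the cluster $J$ to a single position $x_J$, then introduces exactly the reduced (``condensed'') system in the variables $\big((x_i)_{i\in J^c}, x_J, (m_i)_{i\in J^c}, \sum_{j\in J}m_j\big)$, checks that both trajectories project onto solutions of it with the same initial data, and concludes by uniqueness. The only cosmetic differences are that the paper writes out the reduction explicitly for $k=1$ while you sketch the multi-index collapse for general $k$, and that you make explicit the Lipschitz verification for the reduced field (and the non-use of the skew-symmetry \eqref{eq:condS}) that the paper leaves implicit.
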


\begin{proof}
For conciseness and clarity, we prove the statement for the case $k=1$, i.e. $S\in C(\R^2;\R)$ and
\begin{equation}\label{eq:Sbis}
\psi_i^{(N)}(x,m) = \frac{1}{N} m_i \sumj m_j S(x_i,x_j).
\end{equation}
The proof in the general case $k\in\N$ is essentially identical.

Let $J\subset \elts$, $J^c = \elts\setminus J$ and $N_c:=|I^c|\leq N$ denote the cardinal of $I^c$.
Let $(x^0, m^0)\in \R^{dN}\times\R^N$ and $(y^0, p^0)\in \R^{dN}\times\R^N$ satisfy \eqref{eq:prop-indist_0}.
Let $x_J^0:= x_i^0 = y_i^0$ for $i\in J$.

Let us start by considering a different system of dimension $d(N_c+1)+(N_c+1)$:
\begin{equation}\label{eq:syst-condensed}
\begin{cases}
\displaystyle \frac{d}{dt}  \tx_i = \frac{1}{N} \tm_J a(\|\tx_i-\tx_J\|) (\tx_J-\tx_i) + \frac{1}{N} \sum_{j\in J^c} \tm_j a(\|\tx_i-\tx_j\|) (\tx_j-\tx_i) , \qquad \text{ for all } i\in J^c \\
\displaystyle \frac{d}{dt}  \tx_J =  \frac{1}{N} \sum_{j\in J^c} \tm_j a(\|\tx_J-\tx_j\|) (\tx_j-\tx_J),  \\
\displaystyle \frac{d}{dt}  \tm_i = \frac{1}{N} \tm_J \tm_i S(\tx_i,\tx_J) + \frac{1}{N} \tm_i \sum_{j\in J^c} \tm_j S(\tx_i,\tx_j), \qquad \text{ for all } i\in J^c \\
\displaystyle \frac{d}{dt}  \tm_J = \frac{1}{N} \tm_J^2 S(\tx_J,\tx_J) + \frac{1}{N} \tm_J \sum_{j\in J^c} \tm_j S(\tx_J,\tx_j).
\end{cases}
\end{equation}
Given a set of initial conditions, the Cauchy problem associated with \eqref{eq:syst-condensed} has a unique solution.

Let us now consider the solutions $t\mapsto (x(t),m(t)$ and $t\mapsto(y(t), p(t))$ to \eqref{eq:syst-gen} with mass dynamics given by \eqref{eq:Sbis} with respective initial conditions $(x^0, m^0)$ and $(y^0, p^0)$.
First of all, from Prop. \ref{Prop:equal}, $x_i(t)=x_j(t)$ and $y_i(t) = y_j(t)$ for all $(i,j)\in J^2$ and all $t\geq 0$. 
Let $t\mapsto x_J(t):=x_i(t)$ and $t\mapsto y_J(t):=y_i(t)$ for $i\in J$. 
We can compute: 
\begin{equation*}
\begin{cases}
\displaystyle \frac{d}{dt}  x_i = \frac{1}{N} (\sum_{j\in J} m_j) a(\|x_i-x_J\|) (x_J-x_i) + \frac{1}{N} \sum_{j\in J^c} m_j a(\|x_i-x_j\|) (x_j-x_i)  \qquad \text{ for all } i\in J^c,\\
\displaystyle \frac{d}{dt}  x_J =  \frac{1}{N} \sum_{j\in J^c} m_j a(\|x_J-x_j\|) (x_j-x_J),  \\
\displaystyle \frac{d}{dt}  m_i = \frac{1}{N} (\sum_{j\in J} m_j) m_i S(x_i,x_J) + \frac{1}{N} m_i \sum_{j\in J^c} m_j S(x_i,x_j) \qquad \text{ for all } i\in J^c , \\
\displaystyle \frac{d}{dt}(\sum_{j\in J} m_j) = \frac{1}{N} (\sum_{j\in J} m_j)^2 S(x_J,x_J) + \frac{1}{N} (\sum_{j\in J} m_j) \sum_{j\in J^c} m_j S(x_J,x_j).
\end{cases}
\end{equation*}
This shows that $( (x_i)_{i\in J}, x_J, (m_i)_{i\in J}, (\sum_{j\in J} m_j))$ satisfies the differential system \eqref{eq:syst-condensed}. 
Similarly, we can show that $( (y_i)_{i\in J}, y_J, (p_i)_{i\in J}, (\sum_{j\in J} p_j))$ satisfies \eqref{eq:syst-condensed}.
Furthermore, from \eqref{eq:prop-indist_0}, 
$$( (x_i^0)_{i\in J}, x_J^0, (m_i^0)_{i\in J}, (\sum_{j\in J} m_j^0))=( (y_i^0)_{i\in J}, y_J^0, (p_i^0)_{i\in J}, (\sum_{j\in J} p_j^0)).$$
By uniqueness, for all $t\geq 0$ it holds:
$$( (x_i)_{i\in J}, x_J, (m_i)_{i\in J}, (\sum_{j\in J} m_j)) = ( (y_i)_{i\in J}, y_J, (p_i)_{i\in J}, (\sum_{j\in J} p_j)).$$
\end{proof}

\subsection{General context and results}\label{sec:MFpres}

Let $\PR$ denote the set of probability measures on $\R^d$. 
As exposed in Section \ref{sec:indisting}, the Mean-Field Limit process only makes sense for a subclass of mass dynamics that satisfy an indistinguishability property (Def. \ref{Def:indist}).
Hence, from here onward, we will focus on the indistinguishability-preserving mass dynamics given by \eqref{eq:psi_i_gen}.
In this framework, the convergence of the empirical measure $\mu^N$ to a limit measure $\mu$ was proven in \cite{PouradierDuteil21}, and can be stated more precisely as follows:

\begin{theorem}\label{Th:mfl}
Let $T>0$.
For each $N\in\N$, let $(x^{0,N}_i)_{i\in\elts}\in\R^{dN}$ and $(m^{0,N}_i)_{i\in\elts}\in\R^{N}$ satisfying \eqref{eq:sum_M}.
Let $S\in C((\R^d)^{k+1}; \R)$ satisfying Hyp. \ref{hyp:S}.
For all $t\in [0,T]$, let $t\mapsto(x^N(t), m^N(t))$ be the corresponding solution to \eqref{eq:syst-gen}-\eqref{eq:psi_i_gen} with initial data $(x^{0,N},m^{0,N})$, and let $\mu^N_t:= \frac{1}{N}\sum_{i=1}^N m_i^N(t) \delta_{x_i^N(t)}\in \PR$ be the corresponding empirical measures.
If there exists $\mu_0 \in \PR$ such that 
$$
\lim_{N\rightarrow\infty} \rho(\mu^N_0,\mu_0) = 0, 
$$
then for all $t\in [0,T]$, 
$$
\lim_{N\rightarrow\infty} \rho(\mu^N_t,\mu_t) = 0, 
$$
where $\mu_t$ is the solution to the transport equation with source 
\begin{equation}\label{eq:mfl}
\partial_t \mu_t(x) + \nabla\cdot ( V[\mu_t](x) \mu_t(x)) = h[\mu_t](x)
\end{equation} 
with the non-local vector-field given by
\[
\forall \mu\in\PR,\quad \forall x\in\R^d,  \quad V[\mu](x) = \int_{\R^d} \phi(y-x) d\mu(y),
\]
the non-local source term given by
\[
\forall \mu\in \PR, \quad \forall x\in\R^d, \quad h[\mu](x) = \left(\int_{(\R^d)^k} S(x,y_1,\cdots,y_k) d\mu(y_1)\cdots d\mu(y_k)\right) \mu(x).
\]
and with initial condition $\mu_{t=0} = \mu_0$.
\end{theorem}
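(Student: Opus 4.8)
The plan is to prove the statement by the \emph{coupling-through-exact-solutions} strategy, rather than by a direct particle estimate: I would show that each empirical measure $\mu^N_t$ is itself an \emph{exact} weak (measure-valued) solution of the limit equation \eqref{eq:mfl} with initial datum $\mu^N_0$, and then reduce the whole theorem to continuous dependence of solutions of \eqref{eq:mfl} on their initial data in the metric $\rho$. The key point is that, contrary to the graph-limit setting, no discretization error appears here: since $\mu^N_t$ solves \eqref{eq:mfl} \emph{on the nose}, the only gap between $\mu^N_t$ and $\mu_t$ is the gap between their initial data, so the result follows the moment a Lipschitz stability estimate for \eqref{eq:mfl} in $\rho$ is available. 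Concretely I would proceed in three steps: (i) derive the distributional identity for $\mu^N_t$; (ii) collect uniform-in-$N$ a priori bounds controlling the nonlocal operators $V$ and $h$; (iii) establish well-posedness and stability of \eqref{eq:mfl} in the (generalized Wasserstein) metric $\rho$ of \cite{PiccoliRossi14} and close with Gronwall's lemma.

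For step (i), I fix a test function $f\in C^\infty_c(\R^d)$ and differentiate $\int_{\R^d} f\,d\mu^N_t=\frac1N\sum_{i=1}^N m_i^N(t) f(x_i^N(t))$, using the ODE system \eqref{eq:syst-gen}--\eqref{eq:psi_i_gen}:
\begin{align*}
\frac{d}{dt}\int_{\R^d} f\,d\mu^N_t
&=\frac1N\sum_{i=1}^N \dot m_i^N(t)\,f(x_i^N(t))
+\frac1N\sum_{i=1}^N m_i^N(t)\,\nabla f(x_i^N(t))\cdot\dot x_i^N(t)\\
&=\int_{\R^d} f\,dh[\mu^N_t]
+\int_{\R^d}\nabla f(x)\cdot V[\mu^N_t](x)\,d\mu^N_t(x).
\end{align*}
The second equality is obtained by recognizing that $\dot x_i^N=V[\mu^N_t](x_i^N)$ and that the weight prefactor in \eqref{eq:psi_i_gen} is exactly $\int_{(\R^d)^k} S(x_i^N,y_1,\cdots,y_k)\,d\mu^N_t(y_1)\cdots d\mu^N_t(y_k)$, so that the two sums reassemble into the weak form of $-\nabla\cdot(V[\mu^N_t]\mu^N_t)+h[\mu^N_t]$. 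This is precisely the weak formulation of \eqref{eq:mfl}. I would also record here that the skew-symmetry \eqref{eq:condS} of $S$, integrated against the symmetric measure $(\mu^N_t)^{\otimes(k+1)}$, forces $\int h[\mu^N_t]=0$, so the total mass $\int d\mu^N_t=1$ is conserved (as prescribed by \eqref{eq:sum_M}); moreover each equation $\dot m_i^N=m_i^N g_i$ is linear in $m_i^N$, so positivity of the weights is preserved and $\mu^N_t\in\PR$ for all $t$.

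For step (ii), the earlier well-posedness bounds \eqref{eq:bornex}--\eqref{eq:bornem} (obtained here from $g_i$ being bounded by $\bS$ after mass normalization, which yields $\|x_i^N(t)\|\le\overline X$ and $|m_i^N(t)|\le\overline M$ uniformly in $N$) show that $\supp\mu^N_t\subset\overline B(0,\overline X)$ with unit mass, so $V[\mu^N_t]$ and $h[\mu^N_t]$ are controlled uniformly in $N$ and $t$. For step (iii), I would invoke the framework of \cite{PiccoliRossi14,PouradierDuteil21} for transport equations with source: one checks that $\mu\mapsto V[\mu]$ is bounded and Lipschitz in $\rho$ (from $\phi\in\Lip$, Hyp.~\ref{hyp:phi}) and that $\mu\mapsto h[\mu]$ is Lipschitz in $\rho$ on measures of bounded support and mass (from $S$ bounded and Lipschitz, Hyp.~\ref{hyp:S}), which gives existence and uniqueness of the solution $\mu_t$ with datum $\mu_0$. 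Comparing the two \emph{solutions} $\mu^N_t$ (datum $\mu^N_0$) and $\mu_t$ (datum $\mu_0$) of the same equation then yields, via the Lipschitz estimates, a differential inequality $\frac{d}{dt}\rho(\mu^N_t,\mu_t)\le C\,\rho(\mu^N_t,\mu_t)$; Gronwall's lemma gives $\rho(\mu^N_t,\mu_t)\le e^{Ct}\rho(\mu^N_0,\mu_0)$, and since $\rho(\mu^N_0,\mu_0)\to 0$ by hypothesis the conclusion follows for all $t\in[0,T]$.

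The main obstacle is the \emph{stability of the nonlinear source term} $h$ in the metric $\rho$. Unlike the transport part, $h[\mu]$ produces a signed measure of zero net mass and depends polynomially, of degree $k+1$, on $\mu$; controlling $h[\mu]-h[\nu]$ is exactly the situation the generalized Wasserstein distance is designed for. The degree-$k$ dependence I would handle by the telescoping identity $\mu^{\otimes k}-\nu^{\otimes k}=\sum_{l=0}^{k-1}\mu^{\otimes l}\otimes(\mu-\nu)\otimes\nu^{\otimes(k-1-l)}$, estimating each factor using the global bound $\bS$, the Lipschitz constant $L_S$, and the uniform mass and support bounds from step (ii). Verifying these estimates and feeding them correctly into the combined transport-plus-source stability inequality is the technical heart of the proof; everything else is the routine coupling-and-Gronwall scheme outlined above.
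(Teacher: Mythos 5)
Your argument is correct in outline, but it is not the route this paper takes: the paper does not actually prove Theorem \ref{Th:mfl} within its own text --- it imports it from \cite{PouradierDuteil21} --- and what it does prove (Theorem \ref{Th:mfl2}, via Propositions \ref{prop:conv}, \ref{Prop:link_mfl-gl} and \ref{prop:convmu}) is an explicitly weaker statement obtained by \emph{subordinating} the mean-field limit to the graph limit. Your strategy is the direct Dobrushin-type one underlying the cited reference: show that $\mu^N_t$ is an \emph{exact} weak solution of \eqref{eq:mfl} (your step (i) computation is correct, and your use of the skew-symmetry \eqref{eq:condS} against the product measure to get $\int h[\mu^N_t]=0$ mirrors the computation the paper performs for $\tilde\mu$ in Proposition \ref{Prop:link_mfl-gl}), then reduce the theorem to Lipschitz stability of \eqref{eq:mfl} in the generalized Wasserstein distance $\rho$ and close with Gronwall. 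The paper instead first establishes the graph limit $(x_N,m_N)\to(x,m)$ in $\mathcal{C}([0,T];L^2(I))$, defines $\tilde\mu_t=\int_I m(t,s)\,\delta(x-x(t,s))\,ds$, verifies that $\tilde\mu$ solves \eqref{eq:mfl} weakly, and deduces $\mu^N_t\rightharpoonup\tilde\mu_t$. The trade-off is clear: your route delivers the full quantitative conclusion, $\rho(\mu^N_t,\mu_t)\le e^{Ct}\rho(\mu^N_0,\mu_0)$ for \emph{arbitrary} particle configurations whose empirical measures converge in $\rho$, but its entire weight rests on the well-posedness and $\rho$-stability theory for transport equations with a degree-$(k+1)$ nonlinear source, which you correctly identify as the technical heart and defer to \cite{PiccoliRossi14,PouradierDuteil21}; note also that your uniform support bound $\supp\mu^N_t\subset\overline{B}(0,\overline{X})$ presupposes uniformly bounded initial positions, which convergence of $\mu^N_0$ in $\rho$ alone does not guarantee and should be stated as an assumption or absorbed into the bounded-Lipschitz framework (likewise $V[\mu]$ has only linear growth since $\phi$ is Lipschitz but unbounded). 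The paper's route recycles the already-proved graph-limit convergence and makes the hierarchy between the two limits transparent, but yields only weak-$*$ convergence and only for well-prepared initial data of the form $\Pdn(x_0)$, $\Pdn(m_0)$.
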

\noindent In Theorem \ref{Th:mfl}, $\rho$ denotes the Bounded Lipschitz distance, also known as the generalized Wasserstein distance $W^{1,1}_1$ \cite{PiccoliRossi14,PouradierDuteil21}.

The aim of this section is to explore the link between the Graph Limit equation \eqref{eq:GraphLimit-gen} and the Mean-Field Limit equation \eqref{eq:mfl}. 
In the spirit of \cite{BiccariKoZuazua19}, we will actually show that the Mean-Field Limit is subordinated to the Graph Limit.
Moreover, this link can be used to revisit the proof of the mean-field limit of the discrete system \eqref{eq:syst-gen}. It is important to note however that the result is weaker than Theorem \ref{Th:mfl} and its interest lies mainly in its ability to link the two approaches. We will prove the following result : 
\begin{theorem}
\label{Th:mfl2}
Let $T>0$.
Let $x_0\in L^\infty(I;\R^d)$ and $m_0\in L^\infty(I;\Rp)$ satisfying \eqref{eq:integral_egal_a_1}. 
Let $\phi$ satisfy Hyp.~\ref{hyp:phi}.
Let $(\txn,\tmn)$ denote the solution to \eqref{eq:syst-contN} with mass dynamics given by \eqref{eq:psi_i_gen}, with $S$ satisfying Hyp.~\ref{hyp:S} and with initial conditions  
\[\displaystyle x_N(0,s) = \sum_{i=1}^N x_i^{0,N} \mathbf{1}_{[\frac{i-1}{N}, \frac{i}{N})}(s) \quad \text{ and }\quad \displaystyle m_N(0,s) = \sum_{i=1}^N m_i^{0,N} \mathbf{1}_{[\frac{i-1}{N}, \frac{i}{N})}(s),
\]
where $x^{0,N}=\Pdn(x_0)$ and $m^{0,N}=\Pdn(m_0)$ are defined by \eqref{eq:ICx}.\\
Then there exist $x\in \mathcal{C}([0,T];L^\infty(I;\R^d))$ and $m\in \mathcal{C}([0,T];L^\infty(I;\R))$ such that
\[
\displaystyle \|x-\txn\|_{\mathcal{C}([0,T];L^2(I,\R^d))} \xrightarrow[N\rightarrow+\infty]{} 0 
\quad \text{ and } \quad 
\|m-\tmn\|_{\mathcal{C}([0,T];L^2(I,\R))} \xrightarrow[N\rightarrow+\infty]{} 0.
\]
Moreover, $(x,m)$ are solutions to the integro-differential system \eqref{eq:GraphLimit-gen} with $\psi:=\psi_{S,k}$ defined by 
\begin{equation*}
\psi_{S,k}(s,x,m) = m(s) \int_{I^k} m(s_1) \cdots m(s_k) \, S(x(s), x(s_1), \cdots, x(s_k) ) \; ds_1\cdots ds_k.
\end{equation*}
In addition, let $\tilde{\mu}\in C([0,T];\PR)$ be defined by
\begin{equation*}
\tilde{\mu}_t(x) := \int_I m(t,\se) \delta(x-x(t,\se)) d\se.
\end{equation*}
Then, the empirical measure $\mu^N$ defined in \eqref{eq:empmes} converges weakly to $\tilde{\mu}$, and $\tilde{\mu}$ is a solution to the transport equation with source term \eqref{eq:mfl}.
\end{theorem}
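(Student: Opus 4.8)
The plan is to derive the first two assertions (the $L^2$ convergence and the fact that $(x,m)$ solve the graph-limit system) as a consequence of Theorem~\ref{Th:GraphLimit}, and to reserve the main effort for the construction of $\tilde\mu$ and the verification that it solves \eqref{eq:mfl}. The key preliminary is to check that $\psi_{S,k}$ meets the requirements of Hyp.~\ref{hyp:psi}. The boundedness and Lipschitz bounds on $S$ from Hyp.~\ref{hyp:S} yield, once $m$ is controlled in $L^\infty(I)$ and $L^1(I)$, both the $L^2$-Lipschitz estimates \eqref{eq:psilip2} and the sublinear bound \eqref{eq:psisublin} (with a constant proportional to $\bar S$). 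The needed a priori control of $\|m\|_{L^1(I)}$ and the positivity of $m$ come from the structure of \eqref{eq:psi_i_gen}: since $\partial_t m = m\times(\text{bounded factor})$, positivity is propagated; and the skew-symmetry \eqref{eq:condS} forces $\frac{d}{dt}\int_I m(t,\cdot)\,d\se=0$, because the resulting integrand is antisymmetric in one pair of variables while the measure $m(s_0)\cdots m(s_k)\,ds_0\cdots ds_k$ is symmetric, so that $\int_I m(t,\cdot)\,d\se=1$ for all $t$. On this invariant set of positive, unit-mass functions Theorem~\ref{Th:GraphLimit} applies and delivers $(x,m)$, the stated convergences, and the equation; positivity and unit mass also guarantee that each $\tilde\mu_t$ is a probability measure and that $\tilde\mu\in C([0,T];\PR)$.

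Next I would record the identity $\mu^N_t = \int_I m_N(t,\se)\,\delta_{x_N(t,\se)}\,d\se$, valid because $x_N,m_N$ are constant on each $[\frac{i-1}{N},\frac{i}{N})$ and every such interval has length $1/N$. Weak convergence $\mu^N_t\to\tilde\mu_t$, uniformly in $t$, then follows by testing against a Lipschitz function $f$ and splitting $\int_I\big[m_N f(x_N)-m\,f(x)\big]\,d\se$ into $\int_I (m_N-m)\,f(x_N)\,d\se$ and $\int_I m\,\big(f(x_N)-f(x)\big)\,d\se$; the first is bounded by $\|f\|_\infty\,\|m_N-m\|_{L^2(I)}$ and the second by $\mathrm{Lip}(f)\,\|m\|_{L^2(I)}\,\|x_N-x\|_{L^2(I)}$, both of which vanish by the $\mathcal{C}([0,T];L^2(I))$ convergences established above.

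Finally, to prove that $\tilde\mu$ solves \eqref{eq:mfl}, I would use the weak formulation: for $f\in C^1_b(\R^d)$, write $\int_{\R^d} f\,d\tilde\mu_t = \int_I m(t,\se)\,f(x(t,\se))\,d\se$ and differentiate in $t$ under the integral sign, which is legitimate since $x,m\in\mathcal{C}^1([0,T];L^\infty(I))$ by Theorem~\ref{Th:GL-wellpos}, giving $\int_I\big[\partial_t m\,f(x) + m\,\nabla f(x)\cdot\partial_t x\big]\,d\se$. Substituting the two graph-limit equations and using the push-forward identity, one checks that $V[\tilde\mu_t](x(t,\se)) = \int_I m(t,s')\,\phi(x(t,s')-x(t,\se))\,ds' = \partial_t x(t,\se)$ and that the source kernel reproduces $\partial_t m$, so the expression equals $\int_{\R^d}\nabla f\cdot V[\tilde\mu_t]\,d\tilde\mu_t + \int_{\R^d} f(x)\big(\int_{(\R^d)^k} S(x,y_1,\dots,y_k)\,d\tilde\mu_t(y_1)\cdots d\tilde\mu_t(y_k)\big)\,d\tilde\mu_t(x)$, which is precisely the weak form of \eqref{eq:mfl}. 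The main obstacle is the first step: $\psi_{S,k}$ does \emph{not} satisfy Hyp.~\ref{hyp:psi} globally, so establishing the invariant bounds (propagation of positivity and conservation of unit mass through the skew-symmetry \eqref{eq:condS}) and verifying the Lipschitz and sublinear properties on that invariant set — thereby legitimately invoking Theorem~\ref{Th:GraphLimit} — is the delicate part; by contrast, the push-forward computation in the last step is a direct algebraic matching.
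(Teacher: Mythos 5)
Your proposal is correct and follows essentially the same route as the paper: establish positivity, conservation of total mass via the skew-symmetry \eqref{eq:condS}, and the resulting $L^\infty$ bound on $m$ so that the Lipschitz/sublinear estimates of Hyp.~\ref{hyp:psi} hold on the invariant set and the graph-limit convergence of Theorem~\ref{Th:GraphLimit} carries over (the paper's Prop.~\ref{Prop:psiSkprop}, Lemma~\ref{Lemma:psiSklip2} and Prop.~\ref{prop:conv}); then identify $\mu^N_t$ with $\int_I m_N(t,\se)\,\delta_{x_N(t,\se)}\,d\se$ and pass to the limit using the $\mathcal{C}([0,T];L^2(I))$ convergences (Prop.~\ref{prop:convmu}); and finally verify the weak form of \eqref{eq:mfl} by the push-forward computation (Prop.~\ref{Prop:link_mfl-gl}). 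You also correctly single out the adaptation of the well-posedness and convergence arguments to $\psi_{S,k}$ as the delicate point, which is exactly where the paper invests its effort.
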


Theorem \eqref{Th:mfl2} contains many different results.
We show the well-posedness of the system of integro-differential equations given by \eqref{eq:GraphLimit-gen} with $\psi:=\psi_{S,k}$ in Section \ref{sec:WellPos-Indist}. The well-posedness of the microscopic system \eqref{eq:syst-gen}-\eqref{eq:psi_i_gen} will be stated in Section \ref{sec:WellPos-Indist-micro}.
Section \ref{sec:mflsubord} is devoted to proving that $\tilde{\mu}$ is a weak solution to \eqref{eq:mfl}.

\subsection{Well-posedness of the Graph Limit equation}\label{sec:WellPos-Indist}

From here onward, as mentioned in Section \ref{sec:indisting}, we will consider a particular class of mass dynamics which 
preserves mass as well as \emph{indistinguishability}.

Let $\psi_{S,k}:I\times L^\infty(I;\R^d)\times L^\infty(I;\R)\rightarrow \R$ be defined by
\begin{equation}\label{eq:psiSk}
\psi_{S,k}(s,x,m) = m(s) \int_{I^k} m(s_1) \cdots m(s_k) \, S(x(s), x(s_1), \cdots, x(s_k) ) \; ds_1\cdots ds_k
\end{equation}
where 
$k\in\N$ and as previously, $S\in C((\R^d)^{k+1}; \R)$ satisfies Hyp. \ref{hyp:S}.
\begin{rem}
The condition \eqref{eq:condS} of Hyp. \ref{hyp:S} implies that 
\begin{equation}\label{eq:psiSkint}
\int_{I} \psi_{S,k}(s,x,m) ds = 0.
\end{equation}
Actually, this condition \eqref{eq:psiSkint} is sufficient to prove all the subsequent results, but for simplicity purposes, we will keep referring to the more particular (and more tangible) condition \eqref{eq:condS}.
\end{rem}

We notice that at first glance, the mass dynamics $\psi_{S,k}$ given by \eqref{eq:psiSk} do not satisfy 
Hyp. \ref{hyp:psi}, which was necessary in order to prove the existence and uniqueness of the solution to equation \eqref{eq:GraphLimit-gen}. 
The aim of this section is to show that nevertheless, the system of coupled integro-differential equations \eqref{eq:GraphLimit-gen} with $\psi= \psi_{S,k}$ is well-posed, as long as Hyp. \ref{hyp:S} is satisfied.
The proof of existence and uniqueness will rely on the following a priori observations: 

\begin{prop}\label{Prop:psiSkprop}
Let $(x_0,m_0)\in L^\infty(I,\R^d\times\Rp)$. Consider a solution $(x,m)$ on an interval $[0,T]$ to the integro-differential system \eqref{eq:GraphLimit-gen} with initial condition $(x_0,m_0)$ and weight dynamics $\psi_{S,k}$ given by \eqref{eq:psiSk} and Hyp.~\ref{hyp:S}. 
Then, the following properties hold: 
\begin{enumerate}[i)]
\item For almost every $s\in I$ and all $t\in [0,T]$, $m(t,s)>0$
\item For all $t\in [0,T]$, $\int_I m(t,s) ds = M_0 := \int_I m_0(s) ds$ 
\item For almost every $s\in I$ and all $t\in [0,T]$, $m(t,s)\leq m_0(s) \exp(M_0^k \bS t)$
\end{enumerate}
\end{prop}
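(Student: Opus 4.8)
The plan is to exploit the multiplicative structure of the weight equation. Once a solution $(x,m)\in\CLu$ is fixed, writing the second equation of \eqref{eq:GraphLimit-gen} with $\psi=\psi_{S,k}$ (given by \eqref{eq:psiSk}) explicitly shows that, for almost every $s\in I$, the scalar map $t\mapsto m(t,s)$ solves the linear ODE
\[
\partial_t m(t,s) = m(t,s)\,G(t,s),
\]
where
\[
G(t,s):=\int_{I^k} m(t,s_1)\cdots m(t,s_k)\,S(x(t,s),x(t,s_1),\cdots,x(t,s_k))\,ds_1\cdots ds_k
\]
is regarded as a known, bounded coefficient, its boundedness following from $|S|\le\bS$ and $m\in\mathcal{C}([0,T];L^\infty(I;\R))$. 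Integrating this scalar equation gives the representation
\[
m(t,s) = m_0(s)\,\exp\left(\int_0^t G(\tau,s)\,d\tau\right),
\]
which I will use as the backbone for (i) and (iii).

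Property (i) is then immediate: the exponential factor is strictly positive and finite, and $m_0(s)>0$ for almost every $s$ since $m_0\in L^\infty(I;\Rp)$; hence $m(t,s)>0$ for almost every $s$ and all $t\in[0,T]$. For property (ii) I would bypass the representation and differentiate the total mass directly. As $m\in\CLu$ and $I$ has finite measure, the time derivative and the spatial integral may be interchanged, giving $\frac{d}{dt}\int_I m(t,s)\,ds=\int_I\psi_{S,k}(s,x(t,\cdot),m(t,\cdot))\,ds$. The right-hand side vanishes by \eqref{eq:psiSkint}, which is the decisive structural ingredient: rewriting the quantity as an integral over $I^{k+1}$ and observing that the measure $m(t,s_0)\cdots m(t,s_k)\,ds_0\cdots ds_k$ is invariant under permuting the variables, the transposition of the two distinguished indices in \eqref{eq:condS} sends $S$ to its opposite, so the integral equals its own negative and is therefore zero. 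Integrating in time then yields $\int_I m(t,s)\,ds=M_0$ for all $t\in[0,T]$.

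Property (iii) follows by feeding (i) and (ii) back into $G$. Because $m(\tau,\cdot)>0$ almost everywhere, the integrand defining $G$ obeys $m(\tau,s_1)\cdots m(\tau,s_k)\,S(\cdots)\le\bS\,m(\tau,s_1)\cdots m(\tau,s_k)$, so that
\[
G(\tau,s)\le\bS\left(\int_I m(\tau,s')\,ds'\right)^k=\bS\,M_0^k,
\]
the last equality being exactly (ii). Substituting this bound into the representation formula gives $m(t,s)\le m_0(s)\exp(\bS M_0^k t)$, as claimed. Note the ordering matters: (i) is needed to discard the sign of $S$ and (ii) to evaluate the integral, so both must precede (iii).

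The only genuinely delicate point is the rigorous justification of the pointwise-in-$s$ reduction to a scalar ODE and of the representation formula: one must check that for almost every $s$ the coefficient $t\mapsto G(t,s)$ is a well-defined integrable (indeed continuous) function of $t$, and that the identity $\partial_t m(t,s)=m(t,s)G(t,s)$ holds simultaneously for all $t$ and almost every $s$, rather than for an $s$-set depending on $t$. Both facts are consequences of the assumed regularity $(x,m)\in\CLu$ together with the continuity and boundedness of $S$, obtained by reducing to a countable dense set of times and invoking continuity in $t$; this is routine measure-theoretic bookkeeping and presents no essential obstruction.
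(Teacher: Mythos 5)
Your proposal is correct, and it reaches the same three conclusions from the same structural facts (the multiplicative form $\partial_t m = m\cdot(\text{bounded})$, the antisymmetry of $S$ for mass conservation, and the bound $|S|\le\bS$), but the implementation of property (i) genuinely differs from the paper's. The paper argues by contradiction: it introduces the infimum $t^*$ of times at which positivity fails on a non-negligible set, locates a point $s^*$ with $m_0(s^*)>0$ and $m(t^*,s^*)=0$, and derives the lower bound $\partial_t m(t,s^*)\ge -\bS M_0^k\, m(t,s^*)$ for $t\le t^*$ (using the positivity of $m$ on $[0,t^*)$ and property (ii)), so that Gronwall from below contradicts $m(t^*,s^*)=0$. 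You instead freeze the solution, view $t\mapsto m(t,s)$ as solving the \emph{linear} scalar ODE $\partial_t m=mG$ with the bounded coefficient $G$, and write the explicit exponential representation $m(t,s)=m_0(s)\exp\bigl(\int_0^t G(\tau,s)\,d\tau\bigr)$. This buys you two things: positivity follows without any first-crossing-time argument and without needing (ii) at that stage (the crude bound $|G|\le\bS\sup_t\|m(t,\cdot)\|_{L^\infty}^k$ suffices), and property (iii) drops out of the same formula once (i) and (ii) sharpen the bound to $G\le\bS M_0^k$ --- whereas the paper leaves (iii) as an unproved "easy" remark. Property (ii) is handled identically in both proofs (transposition invariance of the product measure against the sign change of $S$). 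The one point you rightly flag --- upgrading the $L^\infty$-valued identity $\partial_t m(t,\cdot)=\psi_{S,k}(\cdot,x(t,\cdot),m(t,\cdot))$ to a pointwise-in-$s$ scalar ODE valid for all $t$ and a.e.\ $s$ simultaneously --- is indeed routine via the integral formulation and a countable dense set of times, and the paper implicitly relies on the same reduction.
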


\begin{rem}
With the assumption \eqref{eq:integral_egal_a_1}, Properties $(ii)$ and $(iii)$ simplify to
\[\int_I m(\cdot,s) ds \equiv 1 \qquad \text{ and } \qquad m(t,s)\leq m_0(s) \exp(\bS t).
\]
\end{rem}

\begin{proof}
The second property is an immediate consequence of the antisymmetry property \eqref{eq:condS}.\\
We now focus on the first point. Suppose that there exists $t_- \in [0,T]$  and a non-negligeable set $\overline{\mathcal{N}}$ in I, such that for $s \in \overline{\mathcal{N}}$, we have $m(s,t_-)\leq 0$. Let
$$t^* := \inf\{t\in [0,T], \, \text{there exists a non-negligeable set } \overline{\mathcal{N}}_k \text{ such that for } s \in \overline{\mathcal{N}}_k, ~  m(t,s)\leq 0\}.$$ Since $m_0(s)>0$ for almost every $s\in I$, there exists $s^* \in \overline{\mathcal{N}}_k$ such that $m_0(s^*) >0$  and by continuity, $m(t^*,s^*)=0$. Moreover, by definition of $t^*$, for $t<t^*$ and  for almost every $s\in I$, we have $m(t,s)>0$. 
Using the global bound on $S$, we then compute, for all $t\leq t^*$ : 
\begin{equation*}
\begin{split}
\partial_t m(t,s^*) = & m(t,s^*) \int_{I^k} m(t,s_1) \cdots m(t,s_k) \, S(x(t,s), x(t,s_1), \cdots, x(t,s_k) ) \; ds_1\cdots ds_k \\
\geq& - \bS \, m(t,s^*) \int_{I^k} m(t,s_1) \cdots m(t,s_k) \, ds_1\cdots ds_k =  - \bS \, M_0^k \, m(t,s^*)
\end{split}
\end{equation*}
where we used the second property.
Thus, from Gronwall's lemma, we obtain that for all $t\leq t^*$, $m(t,s^*)\geq m_0(s^*) e^{-\bS \, M_0^k\, t} > 0 $, which contradicts $m(t^*,s^*)=0$. We deduce that $m(t,s)>0$ for almost every $s\in I$ and all $t\in [0,T]$. \\
The third point can be proven easily by using the positivity of the weights and the boundedness of $S$ by $\bS$.
\end{proof}

We now prove that the results of Lemma \ref{Lemma:WellPos-decoupled} also hold with $\psi=\psi_{S,k}$ satisfying \eqref{eq:psiSk} and Hyp. \ref{hyp:S}.

\begin{lemma}\label{Lemma:WellPos-decoupled-psiSk}
Let $\tx \in \CLx$ and $\tm \in \CLm$. Let $x_0\in L^\infty(I;\R^d)$ and $m_0\in L^\infty(I;\Rp)$.
Let $\ba$ satisfy Hyp. \ref{hyp:phi} and let $\psi_{S,k}$ satisfy \eqref{eq:psiSk} and Hyp. \ref{hyp:S}. \\
Then for any $T>0$, there exists a unique solution $(x,m)\in\CLp$ to the decoupled integro-differential equations 
\begin{equation*}
\left\{\begin{array}{l}
\displaystyle \partial_t x(t,s) = \int_I \tm(t,s_*)\ba(x(t,s) - x(t,\se)) d\se; \qquad x(\cdot,0)=x_0 \\
\partial_t m(t,s) = \psi_{S,k}(s,\tx(t,\cdot),m(t,\cdot)); \qquad m(\cdot,0)=m_0.
\end{array}\right.
\label{eq:GraphLimit-decoupled-psiSk}
\end{equation*}
\end{lemma}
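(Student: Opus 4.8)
The plan is to treat the two decoupled equations separately, exactly as in Lemma~\ref{Lemma:WellPos-decoupled}. The opinion equation for $x$ is unchanged: it only involves the prescribed weight $\tm$ through the interaction term $\int_I \tm(t,\se)\ba(x(t,s)-x(t,\se))\,d\se$, so the Banach fixed-point argument built on the operator $\Kx$ in Lemma~\ref{Lemma:WellPos-decoupled} applies verbatim and yields a unique $x\in\CLxu$. The entire difficulty is therefore concentrated in the weight equation $\partial_t m=\psi_{S,k}(s,\tx(t,\cdot),m(t,\cdot))$, whose right-hand side, being a polynomial of degree $k+1$ in $m$, fails to be globally Lipschitz in the $L^2$ norm and hence does \emph{not} fall under Hyp.~\ref{hyp:psi}.

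First I would record that, since the same function $\tx$ is inserted into every slot of $S$ while the outer weight $m(s)\,m(s_1)\cdots m(s_k)$ is symmetric, the antisymmetry~\eqref{eq:condS} forces $\int_I \psi_{S,k}(s,\tx,m)\,ds=0$; consequently the conclusions of Prop.~\ref{Prop:psiSkprop} hold for the decoupled equation as well, since their proofs use only mass conservation and the bound $|S|\leq\bS$, not the coupling between $x$ and $m$. In particular, any solution is positive and satisfies the a priori bound $\|m(t,\cdot)\|_{L^\infty(I)}\leq R:=\|m_0\|_{L^\infty(I)}\,e^{M_0^k\bS T}$ on $[0,T]$, where $M_0=\int_I m_0(s)\,ds$.

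Next I would establish a local Lipschitz estimate on the ball $\{\|m\|_{L^\infty(I)}\leq 2R\}$. Writing $\psi_{S,k}(s,\tx,m)=m(s)\,P[m](s)$ with $P[m](s):=\int_{I^k}m(s_1)\cdots m(s_k)\,S(\tx(s),\tx(s_1),\ldots,\tx(s_k))\,ds_1\cdots ds_k$, I would split $m_1P[m_1]-m_2P[m_2]=(m_1-m_2)P[m_1]+m_2(P[m_1]-P[m_2])$, use $\|P[m_1]\|_{L^\infty(I)}\leq\bS(2R)^k$, and telescope the product inside $P[m_1]-P[m_2]$ to get $|P[m_1]-P[m_2]|\leq k\bS(2R)^{k-1}\|m_1-m_2\|_{L^1(I)}$. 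Since $|I|=1$, this gives a bound of the form $\|\psi_{S,k}(\cdot,\tx,m_1)-\psi_{S,k}(\cdot,\tx,m_2)\|\leq C(R)\|m_1-m_2\|$ in both the $L^2$ and the $L^\infty$ norm. With this constant the Picard operator $\Km$ of Lemma~\ref{Lemma:WellPos-decoupled} maps the set $\{m\in\mathcal{C}([0,\tT];L^\infty(I)):m(\cdot,0)=m_0,\ \sup_{[0,\tT]}\|m(t,\cdot)\|_{L^\infty(I)}\leq 2R\}$ into itself and is a contraction for a step $\tT$ depending only on $R$, $k$, $\bS$; this yields a unique local solution. The a priori bound $\|m(t,\cdot)\|_{L^\infty(I)}\leq R$ from Prop.~\ref{Prop:psiSkprop} then keeps the solution strictly inside this set, so the same argument can be re-applied on $[\tT,2\tT]$, and so on with the \emph{same} uniform step, covering $[0,T]$.

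The main obstacle is precisely this gap between the genuinely polynomial growth of $\psi_{S,k}$ and the global-Lipschitz framework of Lemma~\ref{Lemma:WellPos-decoupled}; it is resolved by first proving positivity and the uniform-in-time $L^\infty$ bound (Prop.~\ref{Prop:psiSkprop}), which confine the dynamics to a fixed bounded set where $\psi_{S,k}$ is Lipschitz and prevent the continuation step from shrinking. Finally, as in Lemma~\ref{Lemma:WellPos-decoupled}, continuity of the integrand as a map $L^\infty(I)\to L^\infty(I)$ upgrades $m$ to a $\mathcal{C}^1$ function of time, while Prop.~\ref{Prop:psiSkprop}(i) guarantees $m(t,s)>0$, so that the pair $(x,m)$ indeed belongs to $\CLp$.
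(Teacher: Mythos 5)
Your proof is correct, and it reaches the conclusion by a slightly different route for the weight equation than the paper does. Both arguments treat the $x$-equation identically (re-using the operator $\Kx$ from Lemma~\ref{Lemma:WellPos-decoupled}), both observe that the conclusions of Proposition~\ref{Prop:psiSkprop} persist in the decoupled setting, and both telescope the product $m_1(s)m_1(s_1)\cdots m_1(s_k)-m_2(s)m_2(s_1)\cdots m_2(s_k)$ to obtain a Lipschitz estimate for $\psi_{S,k}$ in $m$. The difference is the invariant set on which the contraction is run. You work in the $L^\infty$ (or $L^2$) ball $\{\|m\|_{L^\infty(I)}\le 2R\}$ with $R=\|m_0\|_{L^\infty(I)}e^{M_0^k\bS T}$, obtaining a local Lipschitz constant of order $\bS(k+1)(2R)^k$; the paper instead runs the contraction in $\mathcal{C}([0,\tT];L^1(I;\Rp))$ on the set of \emph{positive} functions of fixed total mass $M_0$, where positivity and mass conservation give the sharper constant $\bS(k+1)M_0^k$, independent of any $L^\infty$ bound; the $L^\infty$ regularity is only recovered afterwards from Proposition~\ref{Prop:psiSkprop}(iii). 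Your variant buys a completely transparent invariance of the fixed-point set (it is just a norm ball, so stability under the Picard operator is a one-line estimate), whereas the paper's choice requires checking that the operator preserves positivity and total mass, which is stated somewhat tersely there; the price you pay is a larger (but still uniform in the time step) contraction constant. One small point worth making explicit in your write-up: the Banach fixed point gives uniqueness only within the ball of radius $2R$, so to conclude uniqueness in all of $\CLp$ you should add that \emph{any} solution of the weight equation satisfies the a priori bound of Proposition~\ref{Prop:psiSkprop}(iii) and therefore lies in that ball — the same implicit step is needed in the paper's proof to restrict attention to $\Mm'$.
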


\begin{proof}
Since the two equations are decoupled, the proof of existence and uniqueness of the solution $x\in\CLx$ to the first equation was already done in Lemma \ref{Lemma:WellPos-decoupled}. We focus on the well-posedness of the second equation.
Let $M_0:=\int_I m_0(s) ds$.
Let $\Mm'$ be a metric subspace of $\mathcal{C}([0,T];L^1(I;\Rp))$ consisting of functions $m$ satisfying $m(\cdot,0) = m_0$ and $\int m(t,s) ds = M_0$ for all $t\in [0,T]$.
Let $\Km'$ be the operator defined by:
$$
\begin{array}{l l l }
\Km': & \Mm' & \rightarrow \Mm' \\
 & m &\displaystyle   \mapsto (\Km' m):(t,s)\mapsto m_0(s) + \int_0^t \psi(s,\tx(\tau,\cdot),m(\tau,\cdot)) d\tau.
\end{array}
$$
From Proposition \ref{Prop:psiSkprop}, it is clear that $\Km'$ maps $\Mm'$ onto $\Mm'$.
Let $\tT>0$. We will show that $\Km'$ is contracting for the norm $\|\cdot\|_{\Mm'} := \sup_{[0,\tT]} \|\cdot\|_{L^1(I)}$.
Let $(m_1,m_2)\in \Mm'^2$.
Then for all $t\leq \tT$,
\begin{equation*}
\begin{split}
\int_I & | \Km' m_1 - \Km' m_2 | (t,s) ds  = \int_I \left |  \int_0^t \psi_{S,k}(s,\tx(\tau,\cdot),m_1(\tau,\cdot))- \psi_{S,k}(s,\tx(\tau,\cdot),m_2(\tau,\cdot)) d\tau \right |  ds \\
& \leq \int_I \int_0^t \int_{I^k} \Big | [m_1(s) m_1(s_1) \cdots m_1(s_k) - m_2(s) m_2(s_1) \cdots m_2(s_k)] \, S(\tx(s), \tx(s_1), \cdots, \tx(s_k) ) \Big | \; ds_1\cdots ds_k \, dt \, ds\\
& \leq \bS \int_0^t \int_{I^{k+1}} \left | m_1(s) m_1(s_1) \cdots m_1(s_k) - m_2(s) m_2(s_1) \cdots m_2(s_k) \right | \;ds \, ds_1\cdots ds_k \, dt \\
& \leq \bS \int_0^t \Big [ \int_{I^{k+1}} | m_1(s) - m_2(s) | m_1(s_1) \cdots m_1(s_k) \;ds \, ds_1\cdots ds_k  \\
& \qquad +  \int_{I^{k+1}} m_2(s) | m_1(s_1) - m_2(s_1) | m_1(s_2) \cdots m_1(s_k) \;ds \, ds_1\cdots ds_k   \\
& \qquad + \cdots +  \int_{I^{k+1}} m_2(s) m_2(s_1) \cdots m_2(s_{k-1}) | m_1(s_k) - m_2(s_k) | \;ds \, ds_1\cdots ds_k \, \Big ] dt \\
& \leq \bS (k+1) \int_0^t \int_I | m_1(s) - m_2(s) | ds M_0^k \, dt,
\end{split}
\end{equation*}
where, from the second line onward we omitted the time dependence for compactness of notation.
We obtain: 
\begin{equation*}
 \| \Km' m_1 - \Km' m_2 \|_{\Mm'}  \leq \tT \bS (k+1) M_0^k   \|  m_1 - m_2 \|_{\Mm'}.
\end{equation*}
Thus, if $\tT\leq (2\bS (k+1) M_0^k)^{-1}$, by the Banach contraction mapping principle, there exists a unique solution $m\in \mathcal{C}([0,\tT];L^1(I;\Rp))$.
We then take $m(\tT,\cdot)$ as the initial condition, and the local solution can be extended to $[0, 2\tT]$, and by repeating the same argument, to $[0,T]$.
We thus showed that there exists a unique solution $m\in \mathcal{C}([0,T];L^1(I;\Rp))$. 
Furthermore, the third property of Prop. \ref{Prop:psiSkprop} implies that $m\in \mathcal{C}([0,T];L^\infty(I;\Rp))$.
Lastly, since the integrand is continuous with respect to $\tau$, $m(\cdot,s)$ is continuously differentiable for almost all $s\in I$, which proves that ${m\in \mathcal{C}^1([0,T];L^\infty(I;\Rp))}$.
\end{proof}

The proof of Theorem \ref{Th:GL-wellpos} relied on the fact that $\psi$ satisfies \eqref{eq:psilip2}. Although $\psi_{S,k}$ does not satisfy \eqref{eq:psilip2}, we notice that a similar property does hold as long as $m$ belongs to $L^\infty$. 

\begin{lemma}\label{Lemma:psiSklip2}
Let $\psi_{S,k}$ satisfy \eqref{eq:psiSk} and Hyp. \ref{hyp:S}. Let $(m_1, m_2) \in L^\infty(I)^2$ with $max(\|m_2\|_{L^\infty(I)},\|m_1\|_{L^\infty(I)}) \leq M_1$.
Then for all $(x_1,x_2)\in L^\infty(I)^2$, it holds: 
\begin{equation}\label{eq:psiSklip2}
\begin{cases}
 \|\psi_{S,k}(\cdot,x_1,m_1)-\psi_{S,k}(\cdot,x_2,m_1)\|_{L^2(I)} \, \leq \, (k+1) L_S M_1^{k+1} \|x_1-x_2\|_{L^2(I)}\\
 \|\psi_{S,k}(\cdot,x_1,m_1)-\psi_{S,k}(\cdot,x_1,m_2)\|_{L^2(I)} \, \leq \, \sqrt{k+1} \bS M_1^k \|m_1-m_2\|_{L^2(I)}.
\end{cases}
\end{equation}
\end{lemma}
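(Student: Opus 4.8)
The plan is to prove both inequalities of \eqref{eq:psiSklip2} by fixing the argument that is held constant and expanding the difference of the two copies of $\psi_{S,k}$ defined in \eqref{eq:psiSk}. Throughout I would rely on two recurring tools: the uniform bound $\|m_r\|_{L^\infty(I)}\le M_1$, which lets me replace every weight factor by $M_1$, and the normalization $|I|=1$, which gives $\|g\|_{L^1(I)}\le\|g\|_{L^2(I)}$ (Cauchy--Schwarz/Jensen) and lets me integrate out any dummy variable $s_i$ at the cost of a factor $1$. Writing $s_0:=s$ makes the $k+1$ slots play symmetric roles.

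For the first inequality I fix the common weight $m_1$ and subtract. Since the prefactor $m_1(s)m_1(s_1)\cdots m_1(s_k)$ is common to both terms, I pull it out and bound it by $M_1^{k+1}$, so that pointwise
\[
|\psi_{S,k}(s,x_1,m_1)-\psi_{S,k}(s,x_2,m_1)|\le M_1^{k+1}\int_{I^k}\big|S(x_1(s_0),\dots,x_1(s_k))-S(x_2(s_0),\dots,x_2(s_k))\big|\,ds_1\cdots ds_k.
\]
The Lipschitz bound on $S$ from Hyp.~\ref{hyp:S} turns the integrand into $L_S\sum_{i=0}^k|x_1(s_i)-x_2(s_i)|$. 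Integrating the $k$ dummy variables, the slot $i=0$ keeps the pointwise value $|x_1(s)-x_2(s)|$ while each slot $i\ge 1$ contributes $\|x_1-x_2\|_{L^1(I)}$. Taking the $L^2(I)$ norm in $s$ and bounding $\|x_1-x_2\|_{L^1(I)}\le\|x_1-x_2\|_{L^2(I)}$ on the latter $k$ slots, the $k+1$ slots assemble into the claimed constant $(k+1)L_S M_1^{k+1}$. This estimate uses only the Lipschitz continuity and boundedness of the weights, and I expect it to be routine.

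The second inequality is the delicate one. Here $S$ is only bounded, so I fix $x_1$ and expand the difference of \emph{products} of weights by the telescoping identity $\prod_{j=0}^k m_1(s_j)-\prod_{j=0}^k m_2(s_j)=\sum_{j=0}^k\big(\prod_{l<j}m_2(s_l)\big)(m_1(s_j)-m_2(s_j))\big(\prod_{l>j}m_1(s_l)\big)$, again with $s_0=s$. Using $|S|\le\bS$ and bounding each surviving weight by $M_1$, a direct triangle-inequality estimate of the $k+1$ telescoping terms only yields the \emph{weaker} constant $(k+1)\bS M_1^k$: each term contributes one copy of $|m_1-m_2|$ (pointwise for $j=0$, in $L^1(I)\hookrightarrow L^2(I)$ for $j\ge1$) with prefactor $\bS M_1^k$, and there are $k+1$ of them. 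Obtaining the sharp factor $\sqrt{k+1}$ is the crux, and it cannot come from the triangle inequality.

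In fact the $\sqrt{k+1}$ bound genuinely requires the skew-symmetry \eqref{eq:condS} of $S$, equivalently the cancellation \eqref{eq:psiSkint}, i.e. $\int_I\psi_{S,k}(\cdot,x,m)\,ds=0$. Indeed, dropping antisymmetry and taking $S\equiv\bS$ with constant weights $m_1\equiv a$, $m_2\equiv b$ makes the left-hand side equal to $|a^{k+1}-b^{k+1}|\,\bS$, which for $a,b$ near $M_1$ behaves like $(k+1)\bS M_1^k\|m_1-m_2\|_{L^2(I)}$ and violates the stated bound for $k\ge1$. The plan is therefore to square and integrate, to regroup the $k+1$ telescoping contributions through a single Cauchy--Schwarz step in the $k+1$ slots, and to exploit the antisymmetry of $S$ to cancel the diagonal overcounting that the triangle inequality is forced to retain. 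This antisymmetry-driven cancellation, rather than any of the bookkeeping, is the main obstacle, since without it the constant degrades from $\sqrt{k+1}$ to $(k+1)$.
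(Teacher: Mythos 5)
Your treatment of the first inequality is correct and essentially the same as the paper's: both arguments bound the common weight prefactor by $M_1^{k+1}$, apply the Lipschitz estimate on $S$, and use an $\ell^1$--$\ell^2$/Cauchy--Schwarz step over the $k+1$ slots to arrive at $(k+1)L_S M_1^{k+1}$. The genuine gap is the second inequality: you do not prove it. You correctly obtain the constant $(k+1)\bS M_1^k$ from the telescoping identity, correctly observe that the stated constant $\sqrt{k+1}$ cannot follow from that route, and then only announce a plan ("regroup the telescoping contributions through a single Cauchy--Schwarz step and exploit the antisymmetry to cancel the diagonal overcounting") without exhibiting any cancellation. As written the crucial step is missing, and it is not clear it exists: the skew-symmetry \eqref{eq:condS} yields $\int_I\psi_{S,k}(s,x,m)\,ds=0$, i.e.\ that the difference $\psi_{S,k}(\cdot,x_1,m_1)-\psi_{S,k}(\cdot,x_1,m_2)$ has zero mean, and a zero-mean constraint does not by itself improve an $L^2$ bound by the factor $\sqrt{k+1}/(k+1)$.

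That said, your skepticism about the constant is well founded and in fact applies to the paper's own proof. After telescoping, the paper passes from $\big|\sum_{i=0}^k A_i\big|^2$ to $\sum_{i=0}^k A_i^2$ (with $A_i=\big(\prod_{j<i}m_2(s_j)\big)(m_1(s_i)-m_2(s_i))\big(\prod_{j>i}m_1(s_j)\big)$); this inequality is false in general, the correct Cauchy--Schwarz version carrying an extra factor $k+1$, and the skew-symmetry of $S$ is never invoked there. Hence the constant legitimately obtainable by the paper's method is precisely your $(k+1)\bS M_1^k$. Your constant-$S$ example shows that no argument ignoring \eqref{eq:condS} can reach $\sqrt{k+1}$ for $k\geq 1$, although it does not disprove the lemma as stated, since a nonzero constant $S$ violates Hyp.~\ref{hyp:S}. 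Since only the existence of some finite Lipschitz constant is used downstream (in the proof of Theorem \ref{Th:WellPos-psiSk}, where it enters $A_T$ and the Gronwall iteration), the natural repair is to replace $\sqrt{k+1}$ by $k+1$ in the statement and give your telescoping argument in full, rather than to pursue the unexecuted antisymmetry cancellation.
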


\begin{proof}
We start by computing
\begin{equation*}
\begin{split}
\|\psi_{S,k} & (\cdot,x_1,m_1)-\psi_{S,k}(\cdot,x_2,m_1)\|_{L^2(I)}^2 \\
& = \int_I \bigg | \int_{I^k} m_1(s_0)\cdots m_1(s_k) (S(x_1(s_0),\cdots,x_1(s_k))-S(x_2(s_0),\cdots,x_2(s_k))) ds_1 \cdots ds_k \bigg |^2 ds_0 \\
& \leq \int_{I^{k+1}} m_1(s_0)^2\cdots m_1(s_k)^2 \Big | S(x_1(s_0),\cdots,x_1(s_k))-S(x_2(s_0),\cdots,x_2(s_k))\Big |^2 ds_0 \cdots ds_k  \\
& \leq \int_{I^{k+1}} m_1(s_0)^2\cdots m_1(s_k)^2 L_S^2 \Big (\sum_{i=0}^k \|x_1(s_i)-x_2(s_i)\|\Big )^2 ds_0 \cdots ds_k  \\
& \leq M_1^{2(k+1)} L_S^2 \int_{I^{k+1}} (k+1) \sum_{i=0}^k \|x_1(s_i)-x_2(s_i)\|^2 ds_0 \cdots ds_k  \\
& \leq (k+1)^2 L_S^2 M_1^{2(k+1)} \|x_1-x_2\|_{L^2(I)}^2
\end{split}
\end{equation*}
from which we get the first inequality of \eqref{eq:psiSklip2}.
For the second inequality, we compute:
\begin{equation*}
\begin{split}
\|\psi_{S,k} & (\cdot,x_1,m_1)-\psi_{S,k}(\cdot,x_1,m_2)\|_{L^2(I)}^2 \\
& = \int_I \bigg | \int_{I^k} \big ( m_1(s_0)\cdots m_1(s_k) - m_2(s_0)\cdots m_2(s_k) \big ) S(x_1(s_0),\cdots,x_1(s_k)) ds_1 \cdots ds_k \bigg |^2 ds_0 \\
& \leq \bS^2  \int_{I^{k+1}} \big | m_1(s_0)\cdots m_1(s_k) - m_2(s_0)\cdots m_2(s_k) \big |^2  ds_0 \cdots ds_k  \\
& \leq \bS^2  \int_{I^{k+1}} \sum_{i=0}^k  \Big( \prod_{j=0}^{i-1} m_2(s_j)^2\Big) | m_1(s_i)-m_2(s_i)|^2 \Big( \prod_{j=i+1}^{k} m_1(s_j)^2\Big)  ds_0 \cdots ds_k  \\
& \leq \bS^2 M_1^{2k} \int_{I^{k+1}} \sum_{i=0}^k | m_1(s_i)-m_2(s_i)|^2 ds_0 \cdots ds_k  = (k+1) \bS^2 M_1^{2k}  \|m_1-m_2\|_{L^2(I)}^2.
\end{split}
\end{equation*}
\end{proof}

We are now fully equipped to prove the well-posedness of the coupled system, with $\psi=\psi_{S,k}$.  

\begin{theorem}\label{Th:WellPos-psiSk}
Let $x_0\in L^\infty(I;\R^d)$ and $m_0\in L^\infty(I;\Rp)$.
Let $\ba$ satisfy Hyp. \ref{hyp:phi} and let $\psi_{S,k}$ satisfy \eqref{eq:psiSk} and Hyp. \ref{hyp:S}. \\
Then for any $T>0$, there exists a unique solution $(x,m)\in\CLp$ to the integro-differential system 
\begin{equation*}
\left\{\begin{array}{l}
\displaystyle \partial_t x(t,s) = \int_I m(t,s_*)\ba(x(t,s) - x(t,\se)) d\se; \qquad x(\cdot,0)=x_0 \\
\partial_t m(t,s) = \psi_{S,k}(s,x(t,\cdot),m(t,\cdot)); \qquad m(\cdot,0)=m_0.
\end{array}\right.
\end{equation*}
\end{theorem}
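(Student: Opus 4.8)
The plan is to mirror the Picard-iteration argument used in the proof of Theorem~\ref{Th:GL-wellpos}, the only genuine difficulty being that $\psi_{S,k}$ fails the global Lipschitz bound~\eqref{eq:psilip2} assumed there. The key observation that unlocks the whole argument is that, by Proposition~\ref{Prop:psiSkprop}, any solution $m$ of the weight equation (coupled or decoupled) is automatically bounded in $L^\infty$ by the iteration-independent constant $M_1 := \|m_0\|_{L^\infty(I)}\exp(M_0^k\bS T)$, where $M_0 = \int_I m_0(s)\,ds$; indeed the exponential bound of Proposition~\ref{Prop:psiSkprop}(iii) depends only on $M_0$ and $\bS$, never on the frozen positions. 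Once this uniform bound is secured, the local Lipschitz estimate~\eqref{eq:psiSklip2} of Lemma~\ref{Lemma:psiSklip2} plays exactly the role that~\eqref{eq:psilip2} played before, now with the single constant $L_\psi' := \max\{(k+1)L_S M_1^{k+1},\ \sqrt{k+1}\,\bS M_1^{k}\}$.

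For existence I would introduce the sequence $(x^n,m^n)_{n\in\N}$ exactly as in Theorem~\ref{Th:GL-wellpos}: set $(x^0,m^0)\equiv(x_0,m_0)$ and let $(x^n,m^n)$ solve the decoupled system obtained by freezing $m^{n-1}$ in the opinion equation and $x^{n-1}$ in the weight equation. Lemma~\ref{Lemma:WellPos-decoupled-psiSk} guarantees that each iterate is well defined in $\CLp$, and in particular that $m^n(t,\cdot)>0$, $\int_I m^n(t,s)\,ds = M_0$, and $\|m^n(t,\cdot)\|_{L^\infty(I)}\le M_1$ for every $n$ and every $t\in[0,T]$. The uniform bound $\|x^n(t,\cdot)\|_{L^\infty(I)}\le X_T$ on the positions then follows from $\|\ba(r)\|\le\Lphi\|r\|$ and Gronwall's lemma exactly as before, with $M_1$ replacing the bound $M_T$. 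These are precisely the uniform $L^\infty$ controls needed to invoke Lemma~\ref{Lemma:psiSklip2}.

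With $L_\psi'$ substituted for $L_\psi$ throughout, the Cauchy estimates of Theorem~\ref{Th:GL-wellpos} carry over line for line: squaring and integrating the differences $x^{n+1}-x^n$ and $m^{n+1}-m^n$, and bounding the weight increments by~\eqref{eq:psiSklip2}, yields a pair of integral inequalities that combine into $u_n(t)\le A_T'\int_0^t(u_n(\tau)+u_{n-1}(\tau))\,d\tau$ for $u_n(t)=\|x^{n+1}-x^n\|_{L^2(I)}^2(t)+\|m^{n+1}-m^n\|_{L^2(I)}^2(t)$, where $A_T'=\max\{8T\Lphi^2 X_T^2,\,8T\Lphi^2 M_1^2,\,2T(L_\psi')^2\}$. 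The same Gronwall-plus-induction step gives $u_n(t)\le (A_T'e^{A_T'T}t)^n U_0/n!$, so $(x^n,m^n)$ is Cauchy in $\mathcal{C}([0,T];L^2(I;\R^d))\times\mathcal{C}([0,T];L^2(I;\R))$; passing to the limit in the integral formulation shows the limit $(x,m)$ solves the coupled system, and the uniform $L^\infty$ bounds upgrade the regularity to $\CLp$.

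For uniqueness the crucial point is again Proposition~\ref{Prop:psiSkprop}: any two solutions $(x_1,m_1)$ and $(x_2,m_2)$ with the same initial data both satisfy $\|m_i(t,\cdot)\|_{L^\infty(I)}\le M_1$, so~\eqref{eq:psiSklip2} applies to their difference with the common constant $L_\psi'$. Reproducing the difference estimates from the uniqueness part of Theorem~\ref{Th:GL-wellpos} gives
\[
\|x_1-x_2\|_{L^2(I)}^2(t)+\|m_1-m_2\|_{L^2(I)}^2(t)\le 2A_T'\int_0^t\big(\|x_1-x_2\|_{L^2(I)}^2(\tau)+\|m_1-m_2\|_{L^2(I)}^2(\tau)\big)\,d\tau,
\]
and Gronwall's lemma forces both terms to vanish. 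The main obstacle is thus conceptual rather than computational: one must first extract the iteration-uniform $L^\infty$ control of the weights from Proposition~\ref{Prop:psiSkprop} before the merely local bound of Lemma~\ref{Lemma:psiSklip2} can be promoted to a global-in-iteration Lipschitz estimate. Once that is done, the remainder of the argument is a faithful transcription of the proof of Theorem~\ref{Th:GL-wellpos}.
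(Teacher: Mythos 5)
Your proposal is correct and follows essentially the same route as the paper: reuse the Picard iteration of Theorem~\ref{Th:GL-wellpos}, invoke Proposition~\ref{Prop:psiSkprop}(iii) to get the iteration-uniform bound $\|m^n(t,\cdot)\|_{L^\infty(I)}\le \|m_0\|_{L^\infty(I)}\exp(M_0^k\bS T)$, and then substitute the local Lipschitz estimate of Lemma~\ref{Lemma:psiSklip2} for~\eqref{eq:psilip2} in the Cauchy and uniqueness estimates, with the constant $A_T$ adjusted accordingly. This is exactly the paper's argument.
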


\begin{proof}
The proof is almost identical to the one of Theorem \ref{Th:GL-wellpos}.
The only difference lies in the inequality \eqref{eq:ineqmn}.
We notice that from the third point of Proposition \ref{Prop:psiSkprop}, we can apply Lemma \ref{Lemma:psiSklip2} with $m_1:=m^n$ and $m_2:= m^{n+1}$,
 and $\|m^n\|_{L^\infty(I)} \leq M_T:=\| m_0\|_{L^\infty(I)} \exp(M_0^k \bS T)$.
We can thus replace \eqref{eq:ineqmn} by:
\begin{equation*}
\begin{split}
\|m^{n+1}-& m^n\|_{L^2(I)}^2(t) \\
\leq &  2 t \int_0^t \int_I |\psi_{S,k}(s,x^n(\tau,\cdot),m^{n+1}(\tau,\cdot)) - \psi_{S,k}(s,x^n(\tau,\cdot),m^{n}(\tau,\cdot)) |^2 ds \, d\tau \\
& + 2t \int_0^t \int_I | \psi_{S,k}(s,x^n(\tau,\cdot),m^{n}(\tau,\cdot)) - \psi_{S,k}(s,x^{n-1}(\tau,\cdot),m^{n}(\tau,\cdot)) | ^2 ds \, d\tau  \\
\leq &  2 t (k+1) \bS^2 M_1^{2k} \int_0^t  \|m^{n+1}-m^{n}\|_{L^2(I)}^2(\tau) d\tau +  2 t  (k+1)^2 L_S^2 M_1^{2(k+1)} \int_0^t \| x^n - x^{n-1}\|_{L^2(I)}^2(\tau) d\tau .
\end{split}
\end{equation*}
The rest of the proof of existence is identical, replacing the previous definition of $A_T$ by 
$$A_T = \max(8 T  \Lphi^2 \, X_T^2,\, 8 T  \Lphi^2 \,  M_T^2,\,  2 T (k+1) \bS^2 M_T^{2k}, 2T (k+1)^2 L_S^2 M_T^{2(k+1)} ).$$ 
The proof of uniqueness can be adapted similarly.
\end{proof}

\subsection{Well-posedness of the microscopic system}\label{sec:WellPos-Indist-micro}

The existence and uniqueness for the microscopic system \eqref{eq:syst-gen} can be shown in the case of functions $\psi_{S,k}$ satisfying Hyp. \ref{hyp:S} instead of Hyp. \ref{hyp:psi}, in the same way that we adapted the proof of well-posedness for the integro-differential equations in Section \ref{sec:WellPos-Indist}. For this reason, we do not provide the proof, which would be redundant, and merely state the result. 

\begin{theorem}
Let $(x^{0,N},m^{0,N}) \in \R^{dN} \times \R^N$.
Let $\ba$ satisfy Hyp. \ref{hyp:phi}, let $\psi_{S,k}$  satisfy \eqref{eq:psiSk} and Hyp. \ref{hyp:S}, and $\psi_i^{(N)}$ be defined  by 
\begin{equation*}
\forall i\in\elts,\qquad \psi_i^{(N)}(x^{N}(t),m^{N}(t)) = N \int_{\frac{i-1}{N}}^{\frac{i}{N}} \psi_{S,k}(s,x_N(t,\cdot),m_N(t,\cdot)) ds. 
\end{equation*}\\
Then for any $T>0$, there exists a unique solution $(x^{N},m^{N}) \in \mathcal{C}^1([0,T];\R^{dN} \times \R^N)$ to the discrete system \eqref{eq:syst-gen} with initial condition $(x^{0,N},m^{0,N})$. 
Furthermore, the solution to the system has the following properties :
\begin{enumerate}
\item[(i)] $\forall i \in \{1, \dots, N \}$, $\forall t \in [0,T]$, $m_i^{(N)}(t) >0$,
\item[(ii)]$\forall t \in [0,T]$, $\displaystyle{\sum_{i=1}^N m_i^{(N)}(t) = N}$,
\item[(iii)] there exist constants $\overline{X}$ and  $\overline{M}$ such that for all $t \in [0,T]$, for all $i \in \{1, \dots, N \}$,
\begin{equation*}
\| x_i^{(N)}(t)\| \leq  \overline{X} 
\quad \text{ and } \quad
| m_i^{(N)}(t)| \leq  \overline{M}.
\end{equation*}
\end{enumerate}
\end{theorem}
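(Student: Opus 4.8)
The plan is to treat \eqref{eq:syst-gen}--\eqref{eq:psi_i_gen} as a finite-dimensional system of ODEs in $\R^{dN}\times\R^N$ and to mirror, in this simpler setting, the arguments already carried out for the graph limit equation in Theorem \ref{Th:WellPos-psiSk}. First I would observe that for the piecewise-constant functions produced by $\Pcn$, the projected dynamics \eqref{eq:psi} with $\psi=\psi_{S,k}$ collapse to the explicit polynomial form \eqref{eq:psi_i_gen}, namely
\[
\psi_i^{(N)}(x,m)=m_i\,\frac{1}{N^k}\sum_{j_1=1}^N\cdots\sum_{j_k=1}^N m_{j_1}\cdots m_{j_k}\,S(x_i,x_{j_1},\dots,x_{j_k}).
\]
Since $\ba$ is Lipschitz (Hyp.~\ref{hyp:phi}) and $S$ is bounded and Lipschitz (Hyp.~\ref{hyp:S}), the right-hand side of the full system is locally Lipschitz in $(x^{N},m^{N})$; the Cauchy--Lipschitz (Picard--Lindel\"of) theorem then yields a unique maximal $\mathcal{C}^1$ solution on some interval $[0,T_{\max})$. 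In contrast with the continuous case, no Banach fixed point in a function space is needed here, so this step is immediate.

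The core of the argument is to establish, on $[0,T_{\max})$, the discrete counterparts of the three properties of Proposition \ref{Prop:psiSkprop}, which simultaneously rule out finite-time blow-up and yield (i)--(iii). For the conservation of mass (ii), I would sum the weight equations and invoke the skew-symmetry \eqref{eq:condS}: relabelling the ``self'' index $i$ as $j_0$, the quantity $\frac{d}{dt}\sum_i m_i=\frac{1}{N^k}\sum_{j_0,\dots,j_k} m_{j_0}\cdots m_{j_k}\,S(x_{j_0},\dots,x_{j_k})$ pairs a factor symmetric under exchange of the two skew-symmetric arguments of $S$ with an antisymmetric factor, hence vanishes; together with \eqref{eq:sum_M} this gives $\sum_i m_i(t)=N$. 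For positivity (i), each weight solves $\dot m_i=m_i\,g_i$ with $g_i$ continuous, so the integrating factor $m_i(t)=m_i^{0,N}\exp\!\big(\int_0^t g_i\big)$ preserves the sign of the (positive) initial weight on every compact subinterval, exactly as in the contradiction-at-the-first-zero argument of Proposition \ref{Prop:psiSkprop}. Combining (i) and (ii) gives $\frac{1}{N^k}\sum_{j_1,\dots,j_k} m_{j_1}\cdots m_{j_k}=\big(\frac{1}{N}\sum_j m_j\big)^k=1$, whence $|g_i|\leq\bS$ and $\dot m_i\leq\bS\,m_i$, so $m_i(t)\leq m_i^{0,N}e^{\bS t}\leq\overline{M}$. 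Finally, since $\ba(0)=0$ forces $\|\ba(r)\|\leq\Lphi\|r\|$ and $\frac{1}{N}\sum_j m_j=1$, a Gronwall estimate on $\max_i\|x_i(t)\|$ produces the bound $\overline{X}$.

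With these uniform a priori bounds the trajectory remains in a fixed compact set, so $T_{\max}=+\infty$ and in particular the solution exists and is unique on $[0,T]$; continuity of the right-hand side along the trajectory then gives $(x^{N},m^{N})\in\mathcal{C}^1([0,T];\R^{dN}\times\R^N)$, while (i)--(iii) hold by construction. I expect the only genuinely delicate point to be the algebraic a priori structure---positivity together with \emph{exact} mass conservation---since these are precisely what rely on the multiplicative form \eqref{eq:psi_i_gen} and the skew-symmetry \eqref{eq:condS}. Once they are secured, the $L^\infty$ bounds and global existence are routine Gronwall arguments, identical in spirit to those of Theorem \ref{Th:WellPos-psiSk}, which is why the full computation can reasonably be omitted as a straightforward adaptation.
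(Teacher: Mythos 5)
Your proof is correct. The paper itself omits the proof, stating only that it is a ``straightforward adaptation at the discrete level'' of the continuous argument, i.e.\ of the Banach fixed-point / Picard-iteration scheme of Lemma \ref{Lemma:WellPos-decoupled-psiSk} and Theorem \ref{Th:WellPos-psiSk} combined with the discrete analogue of Proposition \ref{Prop:psiSkprop}. You take a genuinely more elementary route for the existence--uniqueness step: since the system is a finite-dimensional ODE with a right-hand side that is locally Lipschitz (polynomial in $m$, Lipschitz in $x$ via Hyp.~\ref{hyp:phi} and Hyp.~\ref{hyp:S}), you invoke Cauchy--Lipschitz directly and then extend globally via a continuation criterion, rather than reconstructing the decoupled iteration and Cauchy-sequence argument in a function space. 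This is cleaner and is exactly what finite dimension buys you; the infinite-dimensional machinery of Section \ref{sec:WellPos-Indist} is not needed here. The a priori estimates --- mass conservation from the skew-symmetry \eqref{eq:condS} by symmetrizing the $(k+1)$-fold sum, positivity by the integrating factor $m_i(t)=m_i^{0,N}\exp(\int_0^t g_i)$, the bound $|g_i|\leq \bS$ from $\big(\frac{1}{N}\sum_j m_j\big)^k=1$, and the Gronwall bound on $\max_i\|x_i\|$ using $\|\ba(r)\|\leq\Lphi\|r\|$ --- are precisely the discrete counterparts of Proposition \ref{Prop:psiSkprop}, and your ordering of them avoids any circularity. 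One point worth making explicit: properties (i) and (ii) require the implicit hypotheses $m_i^{0,N}>0$ for all $i$ and the normalization \eqref{eq:sum_M}, which the theorem statement does not list but which you correctly supply; without them the $L^\infty$ bound on the weights (and hence global existence) would not follow from this argument.
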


\begin{rem}
The theorem can also be stated for functions $\psi_i^{(N)}$ given by \eqref{eq:psi_i_gen} satisfying Hyp. \ref{hyp:S}.
\end{rem}

\subsection{Subordination of the mean-field equation to the graph limit equation}\label{sec:mflsubord}

The first part of Theorem \ref{Th:mfl2} consists of deriving the Graph Limit, as in Theorem \ref{Th:GraphLimit}, but for mass dynamics  \eqref{eq:psiSk} satisfying Hyp. \ref{hyp:S} instead of Hyp. \ref{hyp:psi}. 
Nevertheless, as for the proof of the existence of a solution to the graph limit equation, 
the convergence proof can be adapted in a straightforward way, by using the assumptions on $\psi_{S,k}$ given by Hyp. \ref{hyp:S}.
Thus, we can show that the solution to the discrete system \eqref{eq:syst-gen}-\eqref{eq:psi_i_gen} converges to the functions $(x,m)\in \mathcal{C}([0,T]; L^2(I;\R^d))\times \mathcal{C}([0,T]; L^2(I;\R))$, solutions to the integro-differential system \eqref{eq:GraphLimit-gen} with $\psi:I \times L^2(I;\R^d)\times L^2(I;\R)$ defined in \eqref{eq:psiSk}, as stated in the following:

\begin{prop}
\label{prop:conv}
Let $x_0\in L^\infty(I;\R^d)$ and $m_0\in L^\infty(I;\Rp)$ satisfying \eqref{eq:integral_egal_a_1}.
Let $\phi$ satisfying Hyp. \ref{hyp:phi} and $\psi:=\psi_{S,k}$ satisfying \eqref{eq:psiSk} and Hyp. \ref{hyp:S}. 
Then the solution $(\txn,\tmn)$ to \eqref{eq:syst-contN}
with initial conditions 
$x_N(0,s)=\Pcn(\Pdn(x_0))$ and $m_N(0,s)=\Pcn(\Pdn(m_0))$ defined by \eqref{eq:ICx}-\eqref{eq:xN}
converges when $N$ tends to infinity in the $\mathcal{C}([0,T];L^2(I))$ topology, i.e. there exists $(x,m)\in \mathcal{C}([0,T];L^2(I,\R^d))\times \mathcal{C}([0,T];L^2(I,\R))$ such that 
\[
\displaystyle \|x-\txn\|_{\mathcal{C}([0,T];L^2(I,\R^d))} \xrightarrow[N\rightarrow+\infty]{} 0 \quad \text{ and } \quad
\|m-\tmn\|_{\mathcal{C}([0,T];L^2(I,\R))} \xrightarrow[N\rightarrow+\infty]{} 0.
\]
Furthermore, the limit functions $x$ and $m$ are solutions to the integro-differential system \eqref{eq:GraphLimit-gen} with $\psi=\psi_{S,k}$, 
supplemented by the initial conditions $x(0,\cdot) = x_0$ and $m(0,\cdot) = m_0$.
\end{prop}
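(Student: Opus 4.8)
The plan is to observe that this proposition is precisely the analogue of Theorem \ref{Th:GraphLimit}, now with the indistinguishability-preserving mass dynamics $\psi_{S,k}$ in place of a general $\psi$ satisfying Hyp. \ref{hyp:psi}. All the ingredients needed to run the convergence argument of Theorem \ref{Th:GraphLimit} are already available: the limit pair $(x,m)$ exists and is unique by Theorem \ref{Th:WellPos-psiSk}, the discrete solution $(\txn,\tmn)$ exists and is unique by the well-posedness result of Section \ref{sec:WellPos-Indist-micro}, and---crucially---both come with uniform-in-$N$ $L^\infty$ bounds: $\|\tmn(t,\cdot)\|_{L^\infty(I)}\leq\overline{M}$ from property (iii) of that theorem, and $\|m(t,\cdot)\|_{L^\infty(I)}\leq M_T$ from Prop. \ref{Prop:psiSkprop}. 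Since the a priori estimates are uniform in $N$ (the discrete initial data $m_i^{0,N}=N\int_{(i-1)/N}^{i/N}m_0$ are bounded by $\|m_0\|_{L^\infty(I)}$), these constants may be taken independent of $N$.

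First I would reproduce verbatim the estimate for the opinion error $\xin:=\txn-x$. That part of the proof of Theorem \ref{Th:GraphLimit} uses only the Lipschitz character of $\ba$ and the uniform bound $\overline{M}$ on $\tmn$; neither depends on the precise form of the mass dynamics. It therefore yields unchanged the inequality
\[
\frac{1}{2}\frac{d}{dt}\|\xin(t)\|_{L^2(I)}^2 \leq 2\overline{M}L\|\xin(t)\|_{L^2(I)}^2 + C_\phi\,\|\xin(t)\|_{L^2(I)}\,\|\zn(t)\|_{L^2(I)}.
\]

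The only genuine modification concerns the weight error $\zn:=\tmn-m$. As before I would split $\partial_t\zn$ into a term $h_N$ comparing $\psi_{S,k}(\se,\txn,\tmn)$ with $\psi_{S,k}(\se,x,m)$ and a discretization term $g_N$. In the proof of Theorem \ref{Th:GraphLimit} the bound on $\|h_N\|_{L^2}$ came from the Lipschitz hypothesis \eqref{eq:psilip2}, which $\psi_{S,k}$ does not satisfy globally. Here I would instead invoke Lemma \ref{Lemma:psiSklip2} with $M_1:=\max(\overline{M},M_T)$ to obtain
\[
\|h_N(t)\|_{L^2(I)} \leq (k+1)L_S M_1^{k+1}\|\xin(t)\|_{L^2(I)} + \sqrt{k+1}\,\bS M_1^{k}\|\zn(t)\|_{L^2(I)},
\]
which plays exactly the role of the estimate $\|h_N\|_{L^2}\leq L_\psi(\|\xin\|_{L^2}+\|\zn\|_{L^2})$, with the single constant $L_\psi$ replaced by the two explicit constants above. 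The discretization term $g_N$ is handled identically: by Lebesgue's differentiation theorem $\|g_N(t)\|_{L^2(I)}\to 0$ for each $t$.

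From this point the argument closes exactly as in Theorem \ref{Th:GraphLimit}: regularizing by $\bxin,\bzn$ to remove the square roots, summing the two differential inequalities, and applying Gronwall's lemma gives a bound on $\sup_{[0,T]}(\|\xin\|_{L^2}+\|\zn\|_{L^2})$ controlled by the initial errors and $\int_0^T\|g_N(\tau)\|_{L^2(I)}d\tau$. The initial errors vanish as $N\to\infty$ by Remark \ref{Rem:x0m0}, and the $g_N$ contribution vanishes by dominated convergence, yielding the claimed convergence. The main---and essentially only---point requiring care is that the substitute Lipschitz constant from Lemma \ref{Lemma:psiSklip2} is not universal but depends on an $L^\infty$ bound on the weights; the whole argument hinges on the fact that this bound can be chosen uniformly in $N$, which is exactly what the a priori estimates of Prop. \ref{Prop:psiSkprop} and the discrete well-posedness theorem guarantee.
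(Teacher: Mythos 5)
Your proposal is correct and follows exactly the route the paper intends: the paper gives no separate proof of Proposition \ref{prop:conv}, stating only that the convergence argument of Theorem \ref{Th:GraphLimit} ``can be adapted in a straightforward way'' using Hyp.~\ref{hyp:S}, and your write-up is precisely that adaptation --- reusing the $\xi_N$ estimate unchanged and replacing the global Lipschitz bound \eqref{eq:psilip2} on $h_N$ by Lemma \ref{Lemma:psiSklip2} with the $N$-uniform weight bound $M_1=\max(\overline{M},M_T)$ supplied by Prop.~\ref{Prop:psiSkprop} and the discrete well-posedness theorem. You correctly identify the one point requiring care (uniformity in $N$ of the $L^\infty$ bound entering the substitute Lipschitz constants), so nothing is missing.
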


 Let us now study the link between the non-local diffusive model coming from the graph limit equation~\eqref{eq:GraphLimit-gen} and the non-local transport equation with source \eqref{eq:mfl} obtained by the mean-field approach. This will give us an alternative proof of the mean-field limit. 

\begin{prop}
\label{Prop:link_mfl-gl}
Let $(x,m)\in  \mathcal{C}([0,T]; L^2(I;\R^d))\times \mathcal{C}([0,T]; L^2(I;\R))$such that 
\begin{equation}
\label{eq:GL-TE}
\begin{cases}
\partial_t x(t,s) = \displaystyle \int_I m(t,\se)   \phi(x(t,\se)-x(t,s)) d\se \\
\partial_t m(t,s) = \displaystyle m(t,s) \int_{I^k} m(t,s_1) \cdots m(t,s_k) \, S(x(t,s), x(t,s_1), \cdots, x(t,s_k) ) \; ds_1\cdots ds_k.
\end{cases}
\end{equation}
Let $\tilde{\mu}\in\PR$ be defined by 
\begin{equation}\label{eq:def_mu}
\tilde{\mu}_t(x) := \int_I m(t,\se) \delta(x-x(t,\se)) d\se.
\end{equation}
Then $\tilde{\mu}$ satisfies the transport equation with source \eqref{eq:mfl}. 
\end{prop}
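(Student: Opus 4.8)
The plan is to show that $\tilde{\mu}$ solves \eqref{eq:mfl} in the weak (distributional) sense, by testing against an arbitrary $f\in C_c^\infty(\R^d)$ and reducing every term to an integral over the index space $I$ through the defining formula \eqref{eq:def_mu}. Recall that a curve $t\mapsto\mu_t$ is a weak solution of \eqref{eq:mfl} if, for every such $f$ and every $t\in[0,T]$,
\begin{equation*}
\frac{d}{dt}\int_{\R^d} f(x)\,d\tilde{\mu}_t(x) = \int_{\R^d}\nabla f(x)\cdot V[\tilde{\mu}_t](x)\,d\tilde{\mu}_t(x) + \int_{\R^d} f(x)\,dh[\tilde{\mu}_t](x),
\end{equation*}
which is obtained from \eqref{eq:mfl} by integration by parts, the boundary term vanishing because $f$ has compact support. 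By \eqref{eq:def_mu}, for any bounded continuous $g$ one has $\int_{\R^d} g\,d\tilde{\mu}_t = \int_I m(t,\se)\,g(x(t,\se))\,d\se$, so in particular $\int_{\R^d} f\,d\tilde{\mu}_t = \int_I m(t,s)\,f(x(t,s))\,ds$. I would first record that $\tilde{\mu}_t$ is genuinely a probability measure: Proposition \ref{Prop:psiSkprop} gives $m(t,\cdot)>0$ a.e.\ and $\int_I m(t,s)\,ds = M_0 = 1$ under \eqref{eq:integral_egal_a_1}, so $\tilde{\mu}_t\in\PR$, and $V[\tilde{\mu}_t]$, $h[\tilde{\mu}_t]$ are well defined and bounded using Hyp.~\ref{hyp:phi} and the bound $|S|\le\bS$ of Hyp.~\ref{hyp:S}.

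Next I would differentiate $\int_I m(t,s) f(x(t,s))\,ds$ in $t$. Since the solution enjoys the regularity $(x,m)\in\CLp$ from Theorem \ref{Th:WellPos-psiSk}, the integrand is $C^1$ in time uniformly in $s$, and $f$ together with $\nabla f$ is bounded; differentiation under the integral sign is therefore legitimate and yields
\begin{equation*}
\frac{d}{dt}\int_I m(t,s) f(x(t,s))\,ds = \int_I \partial_t m(t,s)\,f(x(t,s))\,ds + \int_I m(t,s)\,\nabla f(x(t,s))\cdot\partial_t x(t,s)\,ds.
\end{equation*}
Into these two terms I would substitute the two equations of \eqref{eq:GL-TE}. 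For the transport term, inserting $\partial_t x(t,s)=\int_I m(t,\se)\,\phi(x(t,\se)-x(t,s))\,d\se$ and observing that, by \eqref{eq:def_mu}, $V[\tilde{\mu}_t](x(t,s))=\int_{\R^d}\phi(y-x(t,s))\,d\tilde{\mu}_t(y)=\int_I m(t,\se)\,\phi(x(t,\se)-x(t,s))\,d\se$, one identifies the second integral exactly with $\int_{\R^d}\nabla f(x)\cdot V[\tilde{\mu}_t](x)\,d\tilde{\mu}_t(x)$. For the source term, substituting $\partial_t m$ from \eqref{eq:GL-TE} and pushing the $k$-fold product forward through \eqref{eq:def_mu}, i.e.\ writing $d\tilde{\mu}_t(y_1)\cdots d\tilde{\mu}_t(y_k)=m(t,s_1)\cdots m(t,s_k)\,ds_1\cdots ds_k$ with $y_j=x(t,s_j)$, turns the first integral into $\int_{\R^d} f(x)\,dh[\tilde{\mu}_t](x)$ with $h$ as in Theorem \ref{Th:mfl}. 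Summing the two identified terms reproduces exactly the weak formulation above.

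The computation is essentially a bookkeeping exercise once the two substitutions are made, so I do not expect any genuine analytic difficulty. The step deserving the most care is the change-of-variables/pushforward identity for the source term, where the product of $k$ copies of $\tilde{\mu}_t$ must be matched with the $k$-fold integral over $I^k$ in \eqref{eq:GL-TE}: Fubini's theorem (applicable since $m\in L^\infty$ and $S$ is bounded) and the symmetry of the construction make this routine, but it is where an index or variable-naming slip is easiest to commit. The other point to state cleanly is the interchange of $\tfrac{d}{dt}$ and $\int_I$, which follows from the $\mathcal{C}^1$-in-time regularity of $(x,m)$ afforded by Theorem \ref{Th:WellPos-psiSk} together with the boundedness of $f$ and $\nabla f$.
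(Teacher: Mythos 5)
Your proposal is correct and follows essentially the same route as the paper's proof: testing against a smooth compactly supported function, differentiating $\int_I m(t,s)f(x(t,s))\,ds$ in time, and substituting the two equations of \eqref{eq:GL-TE} to identify the transport term with $V[\tilde{\mu}_t]$ and the source term with $h[\tilde{\mu}_t]$. The additional care you take with the probability-measure property, the interchange of derivative and integral, and the $k$-fold pushforward is sound and only makes explicit what the paper leaves implicit.
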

\begin{proof}
Given a test function $\varphi=\varphi(x)$, we consider the term
$$(\tilde{\mu}_t,\varphi) := \int_{\R^d} \varphi(x) d \tilde{\mu}_t(x).$$ 
Let us study the quantity $\displaystyle{\frac{d}{dt} (\tilde{\mu}_t,\varphi)}.$ We start by noticing that 
$$
(\tilde{\mu}_t,\varphi) = \displaystyle \int_{\R^d} \varphi(x) d \tilde{\mu}_t(x)=\displaystyle  \int_I m(t,s) \varphi(x(t,s)) ds
$$
by definition of $\tilde{\mu}_t$.  Therefore, it holds
$$
\displaystyle  \frac{d}{dt} (\tilde{\mu}_t,\varphi) = \displaystyle  \int_I \partial_tm(t,s) \varphi(x(t,s)) ds +  \int_I m(t,s) <\partial_t x(t,s), \nabla_x \varphi(x(t,s))> ds
$$
where $<\cdot,\cdot>$ denotes the inner product in $\R^d$. Let us deal with the first term. Using \eqref{eq:GL-TE}, we have 
\[
\begin{split}
& \displaystyle \int_I \partial_t m(t,s) \varphi(x(t,s)) ds\\ 
=& \displaystyle \int_{I^{k+1}}  m(t,s)  m(t,s_1) \cdots m(t,s_k) \, S(x(t,s), x(t,s_1), \cdots, x(t,s_k) )\varphi(x(t,s))  \; ds\, ds_1\cdots ds_k\\
=&  \displaystyle  \int_{\R^d}    \varphi(x) dh[\tilde{\mu}_t](x).
\end{split}
\]
Let us now deal with the second term. We start by rewriting the right-hand side of the first equation of \eqref{eq:GL-TE}. 
 $$
 \displaystyle \int_I m(t,\se)  \phi(x(t,\se)-x(t,s)) d\se 
= \int_{\R^d}   \phi(x_*-x(t,s))  d\tilde{\mu}_t(x_*)
 = V[\tilde{\mu}_t](x(t,s)).
$$
 Thus, we obtain
\[
\begin{split}
 \displaystyle \int_I m(t,s) <\partial_t x(t,s), \nabla_x \varphi(x(t,s))> ds  = &  \displaystyle \int_I m(t,s) <V[\tilde{\mu}_t](x(t,s)),  \nabla_x \varphi(x(t,s))> ds \\
   \displaystyle  = &  \displaystyle \int_{\R^d}   <V[\tilde{\mu}_t](x),  \nabla_x \varphi(x)> d \tilde{\mu}_t(x).
\end{split}
  \]
  Finally, we obtain
  \begin{equation*}
\frac{d}{dt} (\tilde{\mu}_t,\varphi)  = \int_{\R^d}   <V[\tilde{\mu}_t](x),  \nabla_x \varphi(x)> d \tilde{\mu}_t(x) +  \int_{\R^d}    \varphi(x) dh[\tilde{\mu}_t](x)
 \end{equation*}
 which is the weak version of \eqref{eq:mfl}.
\end{proof}

 In order to prove Theorem \ref{Th:mfl2}, what is left is to show the convergence of $\mu^N$ to $\tilde{\mu}$, where $\mu^N$ is the empirical measure for the microscopic system defined in \eqref{eq:empmes}, and $\tilde{\mu}$ is defined from the solution to the graph limit equation by \eqref{eq:def_mu}. 
The key point in order to do that is to rewrite $\mu^N$ using the functions $x_N$ and $m_N$ introduced to perform the graph limit and to pass to the limit in that expression. More precisely, we prove the following proposition:

\begin{prop}\label{prop:convmu}
Let $x_0\in L^\infty(I;\R^d) $ and $m_0\in L^\infty(I;\R^d)$.
Let $(x^{N},m^{N})\in \mathcal{C}([0,T]; \R^d)^{N}\times \mathcal{C}([0,T];\R)^{N}$ satisfy the differential system~\eqref{eq:syst-gen} with initial condition 
$x^{0,N}=\Pdn(x_0)$ and $m^{0,N}=\Pdn(m_0)$ given by \eqref{eq:ICx} 
 and mass dynamics given by \eqref{eq:psi}-\eqref{eq:psiSk}.
%
Let $\mu^N$ be the empirical measure associated with $(x^N,m^N)$, i.e. for all $t\in [0,T]$,
\[
\mu^N_t(x) := \frac{1}{N} \sum_{i=1}^N m^N_i(t) \delta_{x^N_i(t)}.
\]
Secondly, 
let $(x,m)$ be the solutions to the integro-differential system \eqref{eq:GraphLimit-gen} with weight dynamics given by \eqref{eq:psiSk} and initial conditions given by $x(0,\cdot)=x_0$ and $m(0,\cdot)=m_0$.
Let
\[
\tilde{\mu}_t(x) := \int_I m(t,s) \delta(x-x(t,s)) ds.
\]

Then, for all test function $\varphi\in \mathcal{C}^\infty_c(\R^d)$, and all $t\in [0,T]$, it holds 
\begin{equation*}
\lim_{N\rightarrow\infty} \int_{\R^d} \varphi(x) (d\mu^N_t(x) - d\tilde{\mu}_t(x))  =0.
\end{equation*}
\end{prop}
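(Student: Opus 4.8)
The plan is to exploit the piecewise-constant structure of $x_N=\Pcn(x^N)$ and $m_N=\Pcn(m^N)$ to rewrite the pairing of the empirical measure against a test function as an integral over the index space $I$, thereby recasting the problem in the $L^2(I)$ framework where the graph-limit convergence of Proposition \ref{prop:conv} is available. First I would observe that for any $\varphi\in\mathcal{C}^\infty_c(\R^d)$, the definitions \eqref{eq:empmes} and \eqref{eq:xN} give
\[
\int_{\R^d}\varphi(x)\,d\mu^N_t(x) = \frac{1}{N}\sum_{i=1}^N m^N_i(t)\,\varphi(x^N_i(t)) = \int_I m_N(t,s)\,\varphi(x_N(t,s))\,ds,
\]
since $m_N(t,\cdot)$ and $x_N(t,\cdot)$ are constant, equal to $m^N_i(t)$ and $x^N_i(t)$, on each interval $[\tfrac{i-1}{N},\tfrac{i}{N})$ of length $1/N$. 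Likewise, by definition \eqref{eq:def_mu} of $\tilde\mu$, one has $\int_{\R^d}\varphi(x)\,d\tilde\mu_t(x) = \int_I m(t,s)\,\varphi(x(t,s))\,ds$, so that the quantity to control is $\int_I [m_N\,\varphi(x_N) - m\,\varphi(x)]\,ds$.

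Next I would split this difference and estimate each piece. Writing
\[
\int_I \big[m_N\,\varphi(x_N) - m\,\varphi(x)\big]\,ds = \int_I (m_N-m)\,\varphi(x_N)\,ds + \int_I m\,\big[\varphi(x_N)-\varphi(x)\big]\,ds,
\]
the first term is bounded by $\|\varphi\|_{L^\infty}\,\|m_N(t,\cdot)-m(t,\cdot)\|_{L^1(I)}\le \|\varphi\|_{L^\infty}\,\|m_N(t,\cdot)-m(t,\cdot)\|_{L^2(I)}$, using Cauchy--Schwarz on $I=[0,1]$. For the second term, since $\varphi\in\mathcal{C}^\infty_c(\R^d)$ is Lipschitz with constant $\|\nabla\varphi\|_{L^\infty}$ and $m(t,\cdot)\in L^\infty(I)$ by Theorem \ref{Th:WellPos-psiSk}, one gets
\[
\left|\int_I m\,\big[\varphi(x_N)-\varphi(x)\big]\,ds\right| \le \|\nabla\varphi\|_{L^\infty}\,\|m(t,\cdot)\|_{L^\infty(I)}\,\|x_N(t,\cdot)-x(t,\cdot)\|_{L^2(I)}.
\]
Since Proposition \ref{prop:conv} guarantees that both $\|m_N-m\|_{\mathcal{C}([0,T];L^2(I))}$ and $\|x_N-x\|_{\mathcal{C}([0,T];L^2(I))}$ tend to $0$ as $N\to\infty$, both terms vanish in the limit, uniformly in $t\in[0,T]$, which yields the claim.

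The bulk of the argument is routine; the only genuinely substantive step is the first identity, which hinges on recognizing that pairing the empirical measure with $\varphi$ is exactly the integral over $I$ of the piecewise-constant reconstructions $m_N$ and $\varphi\circ x_N$. Once this reformulation is in place, the estimate follows directly from the already-established $L^2(I)$ graph-limit convergence, together with the uniform $L^\infty$ bound on the limit weight $m$ and the Lipschitz regularity of the compactly supported test function. The hard part, such as it is, will be no more than correctly invoking the $L^\infty$ bound on $m$ supplied by Theorem \ref{Th:WellPos-psiSk}; no delicate compactness or tightness argument is needed, precisely because the graph limit already provides strong $L^2$ convergence of both $x_N$ and $m_N$.
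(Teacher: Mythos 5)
Your proposal is correct and follows essentially the same route as the paper: the same identification of $\int\varphi\,d\mu^N_t$ with $\int_I m_N(t,s)\varphi(x_N(t,s))\,ds$, the same add-and-subtract splitting (you merely attach the uniform $L^\infty$ bound to the limit weight $m$ where the paper attaches it to $m_N$ via \eqref{eq:bornemN}, which is immaterial), and the same conclusion via Cauchy--Schwarz, the Lipschitz bound on $\varphi$, and Proposition \ref{prop:conv}.
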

\begin{proof}
Let $x_N= \Pcn(x^N)\in C([0,T];L^\infty(I;\R^d))$ and $m_N=\Pcn(m^N)\in \mathcal{C}([0,T];L^\infty(I;\R))$ defined by
\eqref{eq:xN} be the solutions to the integro-differential system \eqref{eq:syst-contN}-\eqref{eq:psiSk} with initial conditions $x_N(0,\cdot) = \Pcn(x^{0,N})$ and  $m_N(0,\cdot) = \Pcn(m^{0,N})$ given by \eqref{eq:ICx}. 
We begin by showing that for all test function $\varphi\in \mathcal{C}^\infty_c(\R^d)$,
\begin{equation}\label{eq:equivmeas}
\int_{\R^d} \varphi(x) d\mu^N_t(x) = \int_{\R^d} \varphi(x) d\tilde{\mu}_t^N(x) ,
\end{equation}
where
$\tilde{\mu}^N_t\in\PR$ is the measure defined by 
\[
\tilde{\mu}^N_t(x) := \int_I m_N(t,s) \delta(x-x_N(t,s)) ds.
\]
Since $x_N(t,s) = \sum_{i=1}^N x_i^{N}(t) \mathbf{1}_{[\frac{i-1}{N}, \frac{i}{N})}(s)$ and $m_N(t,s) = \sum_{i=1}^N m_i^{N}(t) \mathbf{1}_{[\frac{i-1}{N}, \frac{i}{N})}(s)$, we can compute:
\[
\begin{split}
 \int_{\R^d} \varphi(x) \int_I m_N(t,s) \delta(x-x_N(t,s)) ds = & \int_{\R^d} \varphi(x) \int_I \sum_{i=1}^N m_i^{N}(t) \mathbf{1}_{[\frac{i-1}{N}, \frac{i}{N})}(s) \delta(x-\sum_{i=1}^N x_i^{N}(t) ) ds\\
= & \int_{\R^d} \frac{1}{N} \sum_{i=1}^N  m_i^{N}(t) \varphi(x_i^{N}(t)) dx 
= \int_{\R^d}  \varphi(x) d\mu^N_t(x) .
\end{split}
\]

Secondly, we prove the following weak convergence: 
\begin{equation*}
\tilde{\mu}^N_t \rightharpoonup \tilde{\mu}_t \text{~when~} N \to \infty.
\end{equation*}
Using the definitions of $\tilde{\mu}^N_t$ and $\tilde{\mu}_t$, we write:
\[
\begin{split}
  \bigg|\int_{\R^d} \varphi(x) & (d\tilde\mu^N_t(x) - d\tilde{\mu}_t(x))\bigg| = \left| \int_{\R^d} \varphi(x)  \int_I (m_N(t,s) \delta(x-x_N(t,s)) - m(t,s) \delta(x-x(t,s))) \, ds \, dx \right| \\
 = &\left| \int_{I} \varphi(x_N(t,s)) m_N(t,s)  ds -  \int_{I} \varphi(x(t,s)) m(t,s)  ds \right| \\
 = & \left|   \int_{I} (\varphi(x_N(t,s))- \varphi(x(t,s)) ) m_N(t,s)  ds - \int_{I} \varphi(x(t,s)) (m(t,s)- m_N(t,s) ) ds\right| \\
 \leq & \left|  \left( \int_I (\varphi(x_N(t,s))- \varphi(x(t,s)) )^2ds \right)^{1/2} \left(\int_I m_N(t,s)^2 ds \right)^{1/2} \right| \\
       & + \left(\int_I \varphi(x(t,s))^2 ds \right)^{1/2} \left( \int_I (m_N(t,s) - m(t,s))^2 ds\right)^{1/2}\\
     \leq & \left|  K \left( \int_I ((x_N(t,s)- x(t,s) )^2ds \right)^{1/2} \left(\int_I m_N(t,s)^2 ds \right)^{1/2} \right| \\
       & + \left(\int_I \varphi(x(t,s))^2 ds \right)^{1/2} \left( \int_I (m_N(t,s) - m(t,s))^2 ds\right)^{1/2}
 \end{split}
\]
 for a certain constant $K>0$, using the fact that $\varphi$ is a regular enough test function. Moreover, since the solution to \eqref{eq:syst-gen} 
 satisfies \eqref{eq:bornemN} and since $x \in \mathcal{C}([0;T];L^\infty(I;\R^d))$, $\varphi$ being regular, there exists a constant $C>0$ such that we finally have\\
\[
\begin{split} 
& \left|\int_{\R^d} \varphi(x) (d\tilde\mu^N_t(x) - d\tilde{\mu}_t(x)) dx \right|\\
    \leq &  K \overline{M} \left( \int_I ((x_N(t,s)- x(t,s) )^2ds \right)^{1/2}  + C \left( \int_I (m_N(t,s) - m(t,s))^2 ds\right)^{1/2}.
\end{split}
\]
 Proposition \ref{prop:conv} allows us to conclude that
\[
\lim_{N\rightarrow\infty} \left|\int_{\R^d} \varphi(x) (d\mu^N_t(x) - d\tilde{\mu}_t(x)) dx \right| =0, 
\]
and together with the equality \eqref{eq:equivmeas}, this proves the desired convergence.
\end{proof}

Theorem \ref{Th:mfl2} is thus proven by combining Propositions \ref{prop:conv}, \ref{Prop:link_mfl-gl} and \ref{prop:convmu}.

\section{Numerical simulations}
\label{sec:numeric}

\subsection{Dynamics not preserving indistinguishability}\label{sec:numericnotindisting}

In this section, we illustrate the convergence of the solutions of the microscopic model \eqref{eq:syst-gen} to the solution of the graph limit equation satisfying  \eqref{eq:GraphLimit-gen}, as stated in Theorem \ref{Th:GraphLimit}.
We focus on mass dynamics that \textit{do not} preserve indistinguishability, so for which the classical mean-field limit process does not hold.
In particular, we consider a situation in which the agents are divided into $K$ groups $I_k$, with $k\in\{1,\cdots,K\}$. Each group is composed of leaders $I_k^L$ and followers $I_k^F$ so that $I_k=I_k^L\cup I_k^F$ and $I_k^L\cap I_k^F=\emptyset$. Within each group, the weight of each leader increases proportionally to itself and to the total weight of all the followers of the group. Conversely, the weight of each follower decreases proportionally to itself and to the total weight of all the leaders. More specifically,  
consider the function $\psi$ given by 
\begin{equation}\label{eq:modelsimuGL2}
\forall x\in L^2(I;\R),\quad \forall m\in L^2(I;\R),\qquad \psi(s,x,m) = 
\begin{cases}
\beta m(s) \int_{I_k^F} m(s')ds' \quad \text{ if } s\in I_k^L \\
-\beta m(s) \int_{I_k^L}  m(s')ds' \quad \text{ if } s\in I_k^F.
\end{cases}
\end{equation}

One easily checks that the total mass $\int_I m(s) ds$ is conserved, and that $\psi$ satisfies \eqref{eq:psilip2} on any time interval $[0,T]$ with $T<\infty$.

Given a total number of groups $K\in\N$ and a proportion $r\in (0,1)$ of leaders in each group, we define $I_k := [\frac{k-1}{K},\cdots,\frac{k}{K})$, $I_k^L = [\frac{k-1}{K},\cdots,\frac{k-1}{K}+\frac{r}{K})$ and $I_k^F = [\frac{k-1}{K}+\frac{r}{K},\cdots \frac{k}{K})$.
Provided that $n := N/K \in\N$ and that $rn\in\N$,
the corresponding microscopic dynamics can be written simply as: 
\begin{equation}\label{eq:modelsimumicro2}
\forall k\in\{1,\cdots ,K\}, \quad \dot m_{(k-1)n + i} = 
\begin{cases}
\displaystyle \frac{\beta}{N} m_{(k-1)n +  i} \sum_{j=rn+1}^{n} m_{(k-1)n+j} \quad \text{ if } i\in\{1,\cdots , r n\}, \\
\displaystyle - \frac{\beta}{N} m_{(k-1)n + i} \sum_{j=1}^{rn} m_{(k-1)n+j} \quad \text{ if } i\in\{rn+1,\cdots , n\}. 
 \end{cases}
\end{equation}

We show the behavior of the model and the convergence of the microscopic dynamics to the macroscopic ones in cases in which we have one ($K=1$) and two ($K=2$) groups, with a proportion $r=10\%$ of leaders in each one. The initial conditions for the graph limit equation were taken to be $s\mapsto x_0(s) = \sin^2(4s)$ and $s\mapsto m_0(s)= 1/M_0*s \cos^2(5s)$, with $M_0:=\int_0^1 s \cos^2(5s) ds$. The corresponding initial conditions for the microscopic model were computed from \eqref{eq:ICx}. In all the following examples, the interaction function used was $y\mapsto a(y) = \frac{1}{1+y^2}$.

Figure \ref{fig:Micro_K1} shows the evolution of the opinions and weights of 20 agents divided into 2 leaders (indexed $i=1$ and $i=2$) and 18 followers (indexed $i=3... 20$). Observe that the leaders' weights quickly increase to sum up to the total weight of the group. As a consequence, consensus is achieved at a value $x=0.2$ close to the leaders' initial positions.
Figure \ref{fig:microGL2} illustrate the convergence of the microscopic dynamics to the graph limit ones by comparing the opinions and weights for $N=100$.

\begin{figure}[h]
\begin{center}
\includegraphics[scale=0.18]{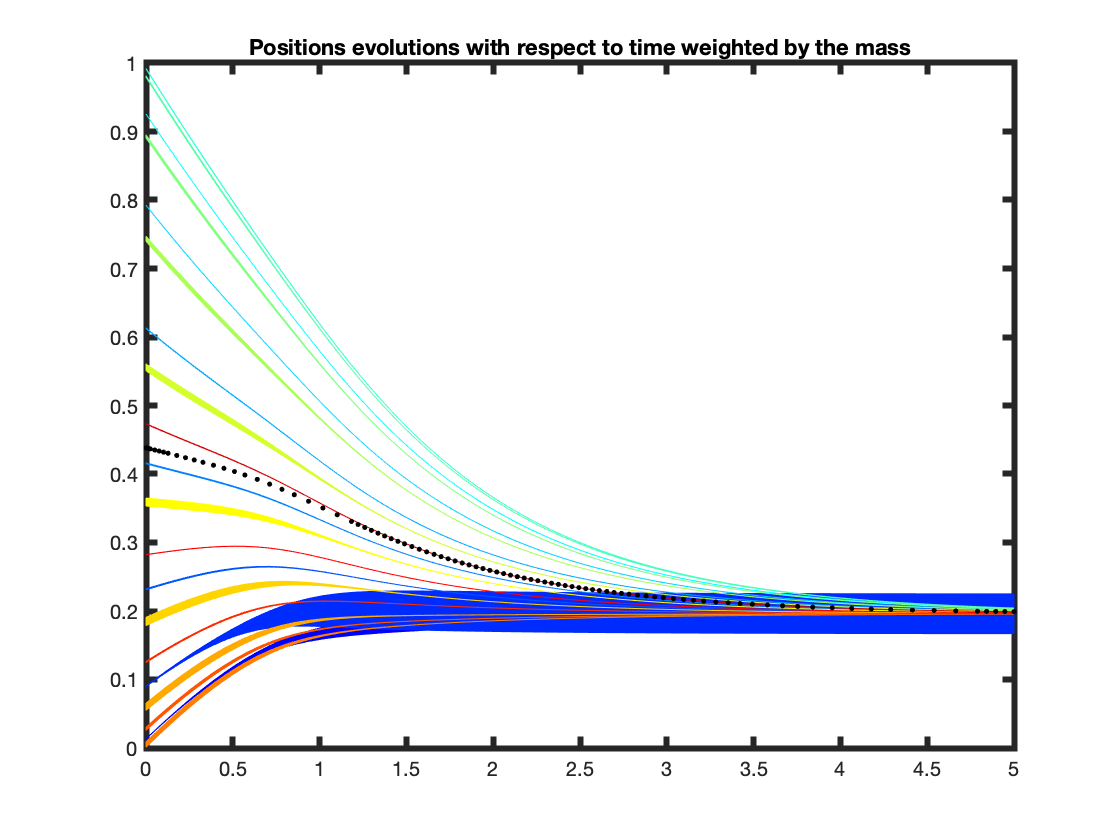}
\includegraphics[scale=0.18]{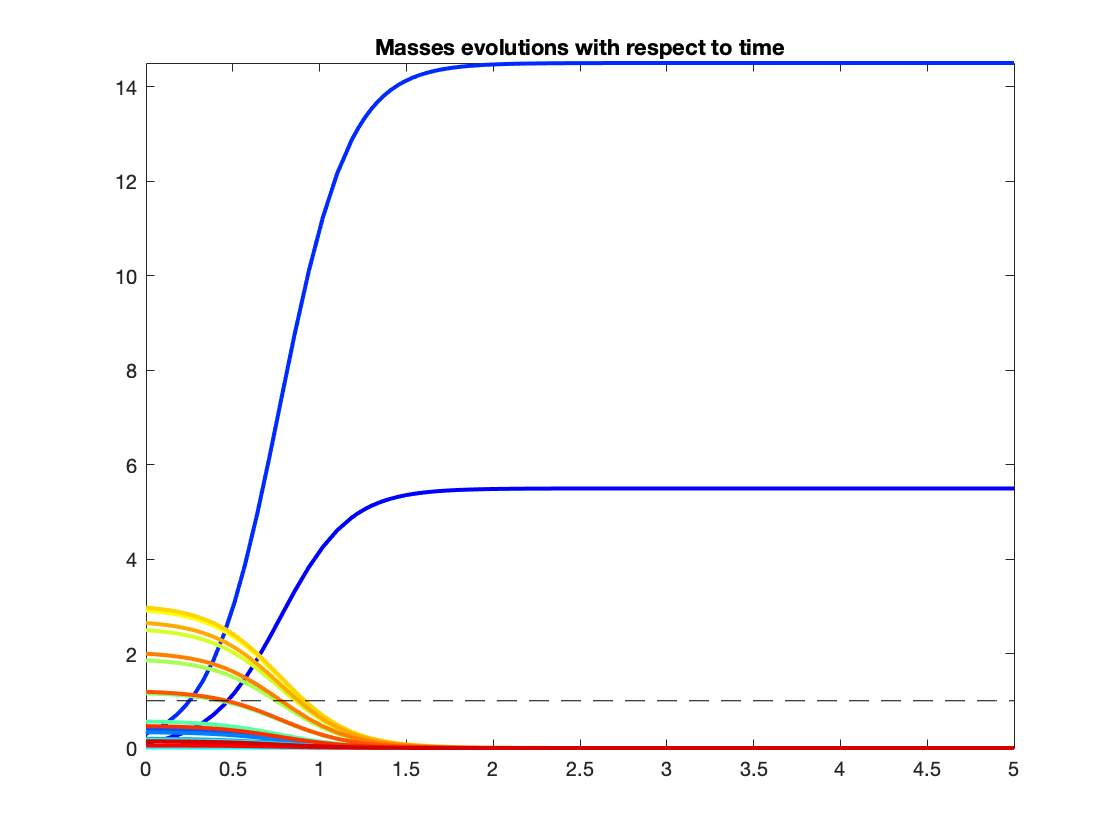}
\includegraphics[scale=0.5, clip=true]{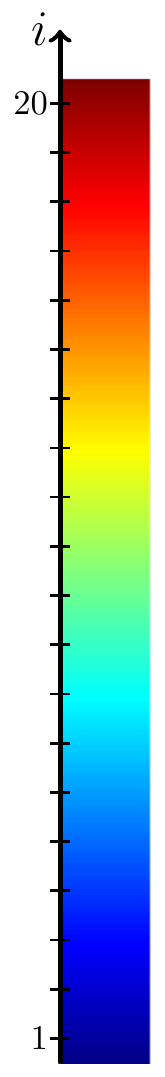}
\end{center}
\caption{ Evolution of opinions (left) and weights (right) of 20 agents. Center: color scale for the index $i$ ranging from 1 (blue) to 20 (red). Parameters: $K=1$, $r=0.1$, $\beta=5$.}\label{fig:Micro_K1}
\end{figure}

\begin{figure}[!h]
    \centering
        \includegraphics[width=0.32\textwidth]{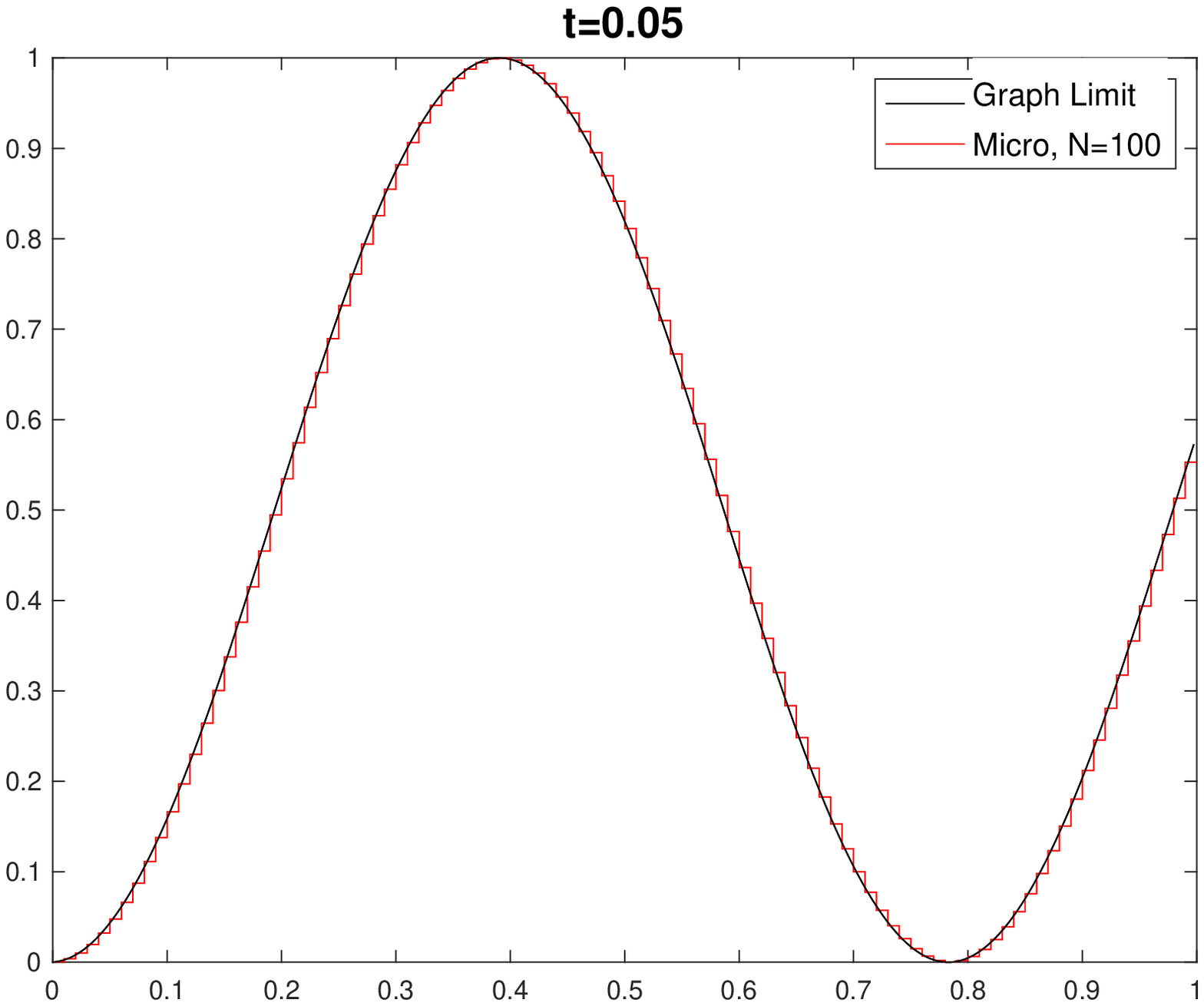}
        \includegraphics[width=0.32\textwidth]{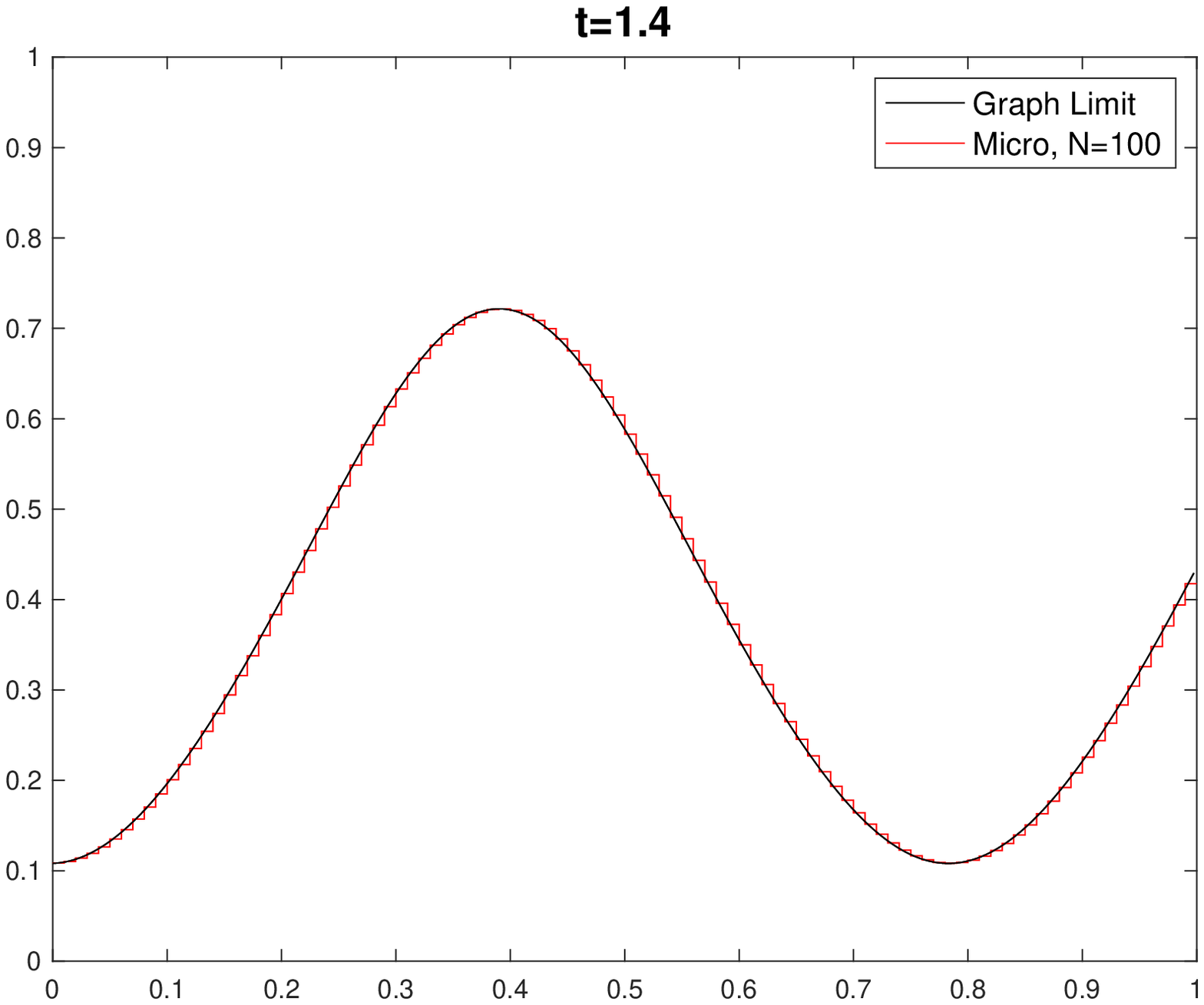}
        \includegraphics[width=0.32\textwidth]{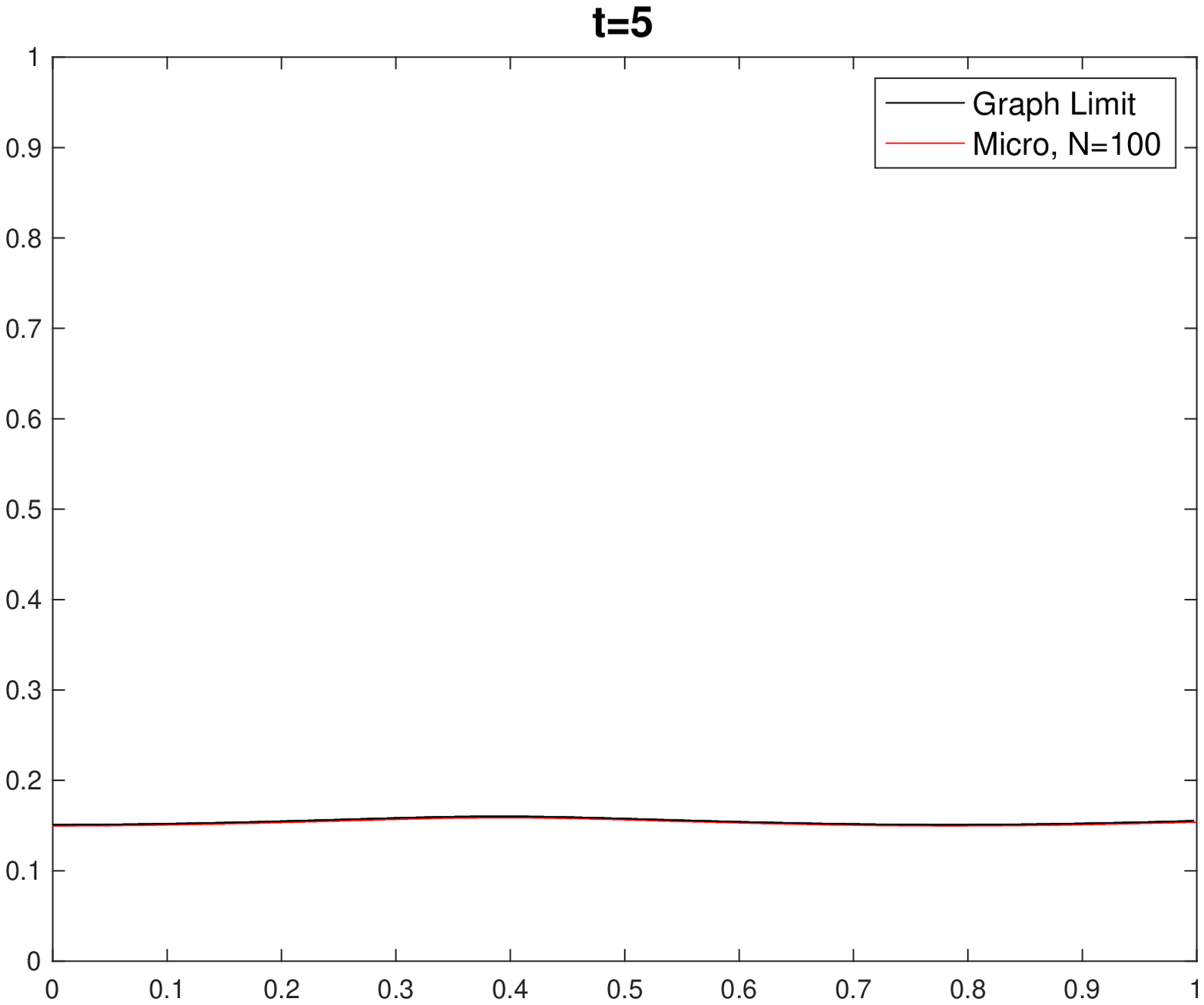} \\
        \includegraphics[width=0.32\textwidth]{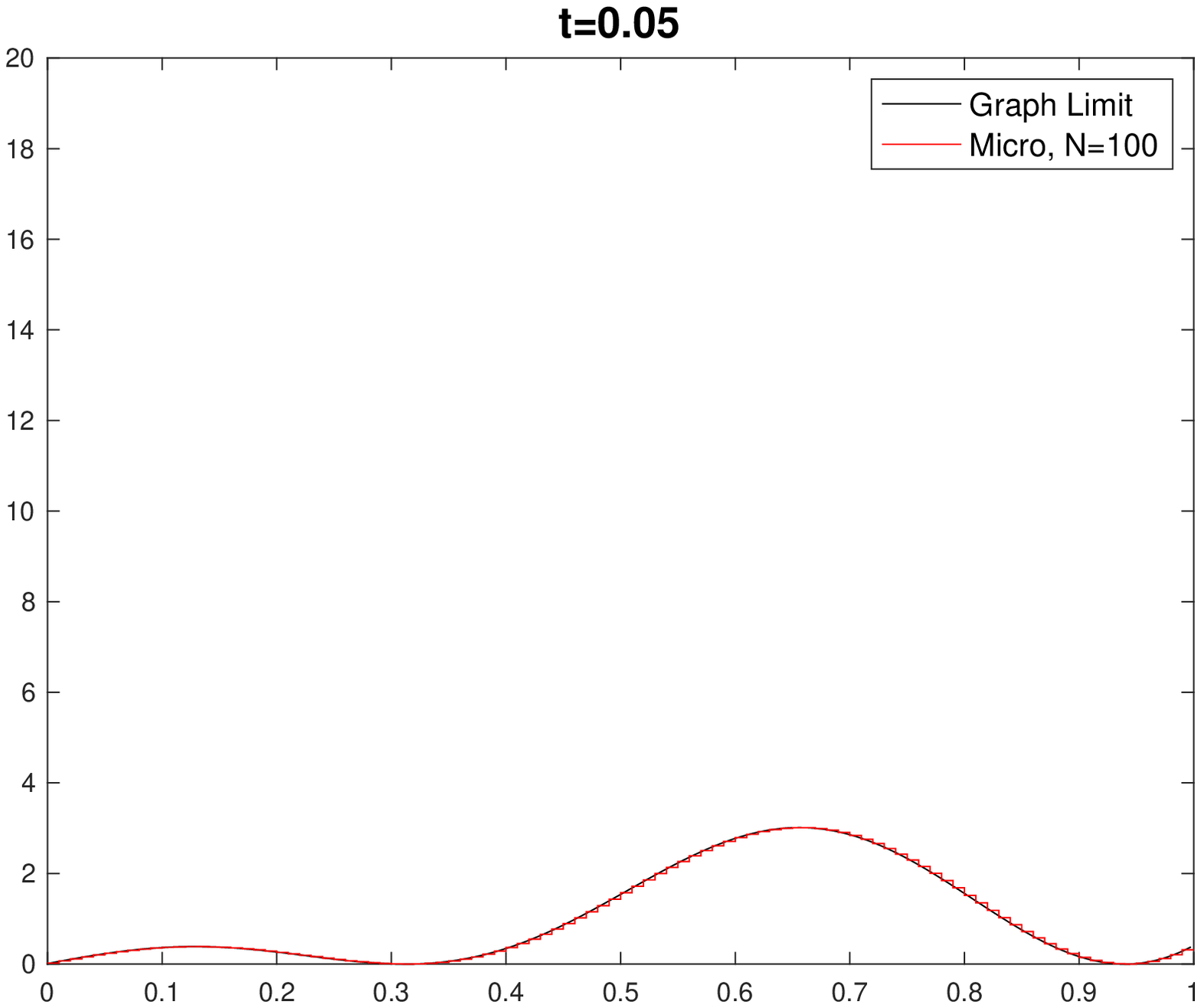}
        \includegraphics[width=0.32\textwidth]{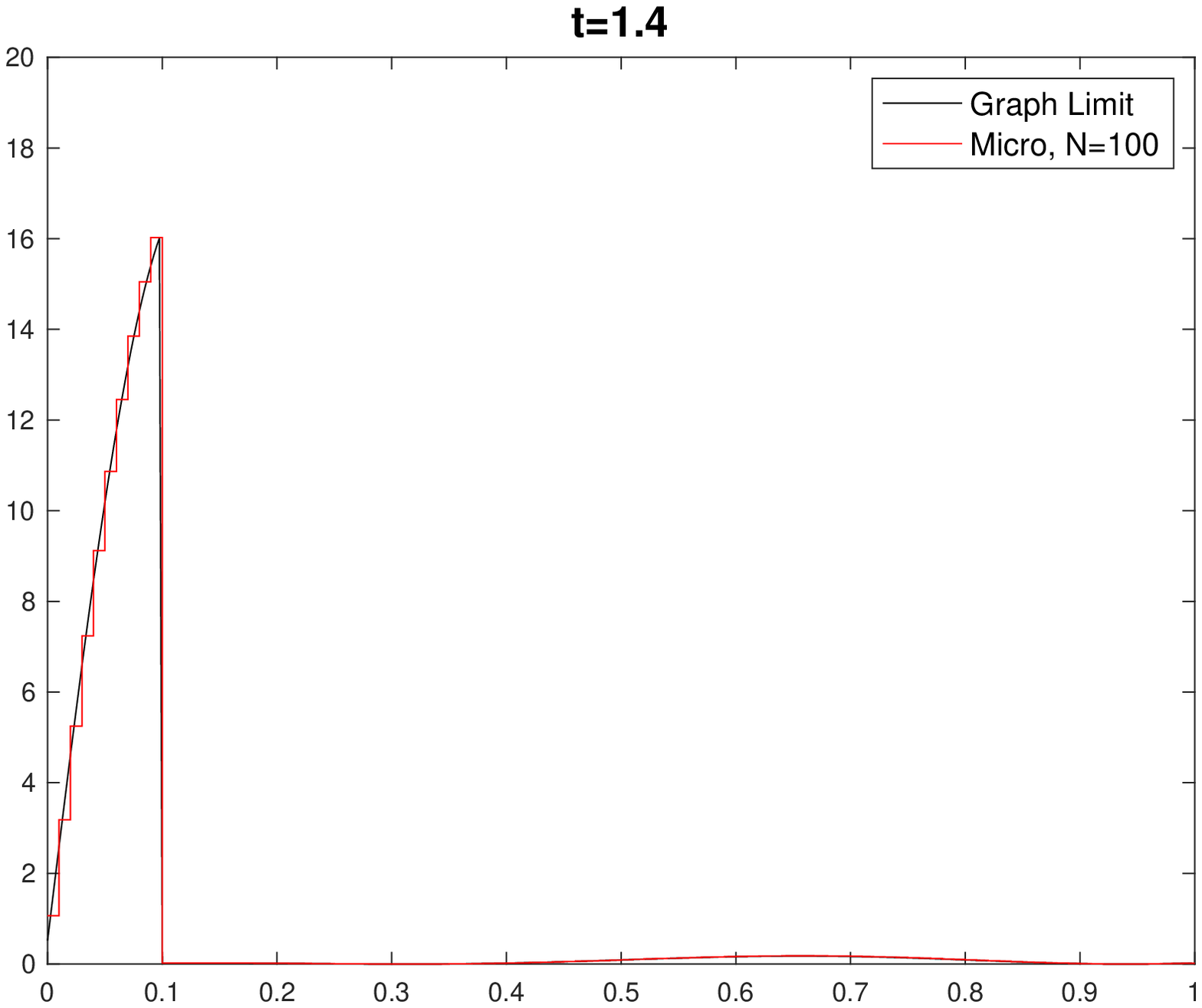}
        \includegraphics[width=0.32\textwidth]{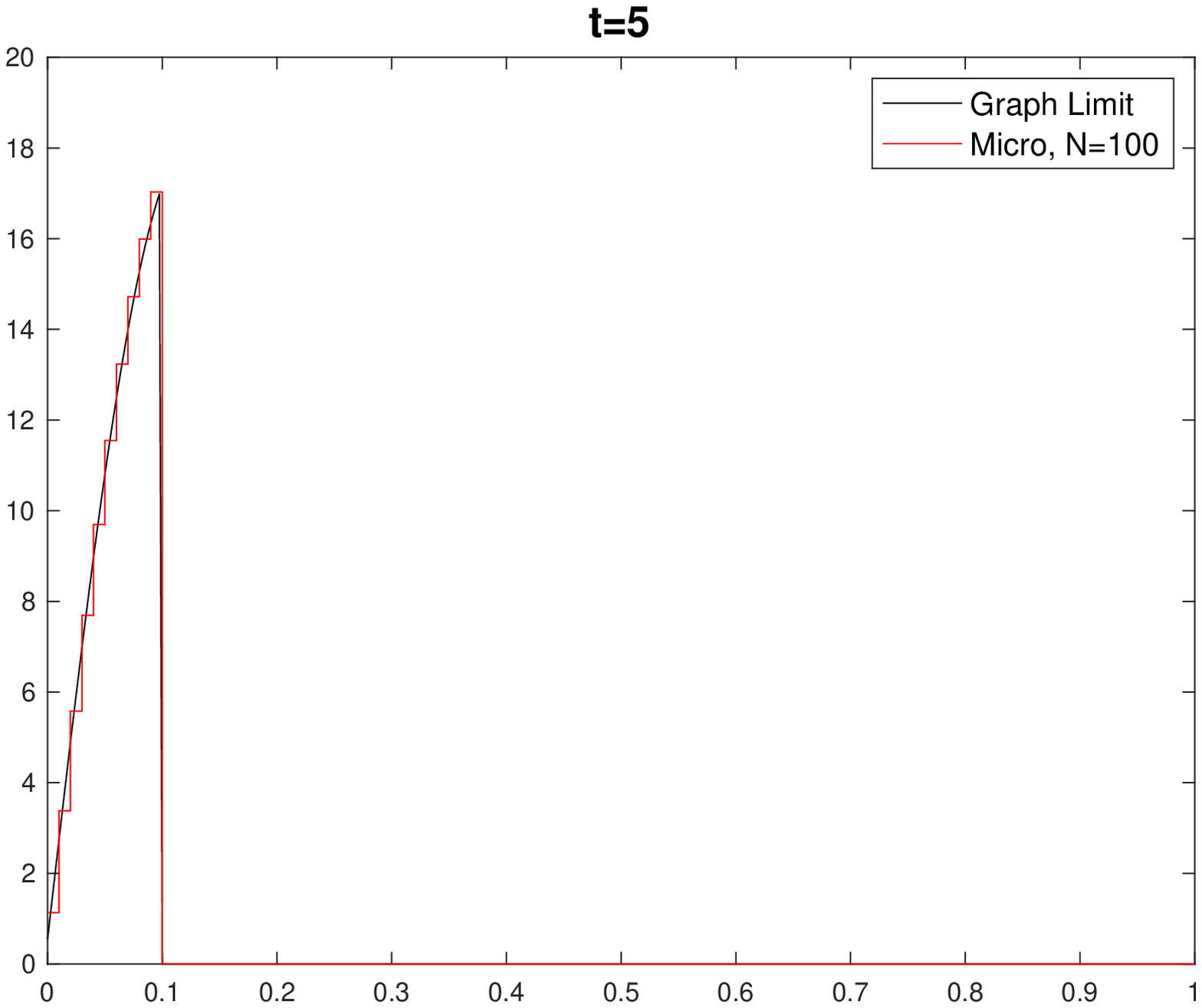} \\    
    \caption{\label{fig:microGL2} Representation of the solutions $s\mapsto x_N(t,s)$ (top) and $s \mapsto m_N(t,s)$ (bottom) to the microscopic dynamics \eqref{eq:syst-gen}-\eqref{eq:modelsimumicro2} and of the solutions $s\mapsto x(t,s)$ (top) and $s\mapsto m(t,s)$ (bottom) to the Graph Limit equation \eqref{eq:GraphLimit-gen}-\eqref{eq:modelsimuGL2} for $t=0.05$, $t=1.4$ and $t=5$. Parameters: $K = 1$, $r = 0.1$, $\beta = 5$.}
\end{figure}

Figure \ref{fig:Micro_K2} shows the evolution of the opinions and weights of 20 agents divided into 2 groups, each one containing one leader (respectively indexed $i=1$ and $i=11$) and 9 followers (respectively indexed $i=2... 10$ and $i=12...20$).
Observe that the second group leaders' weights increases much faster than the first group leaders' weight, due to the fact that the total weigh of the second group is larger than the total weight of the first group. As a consequence, consensus is achieved at a value $x=0.6$ close to the second leader's initial position.
Figure \ref{fig:microGL3} illustrate the convergence of the microscopic dynamics to the graph limit ones by comparing the opinions and weights for $N=100$.

\begin{figure}[h]
\begin{center}
\includegraphics[scale=0.18]{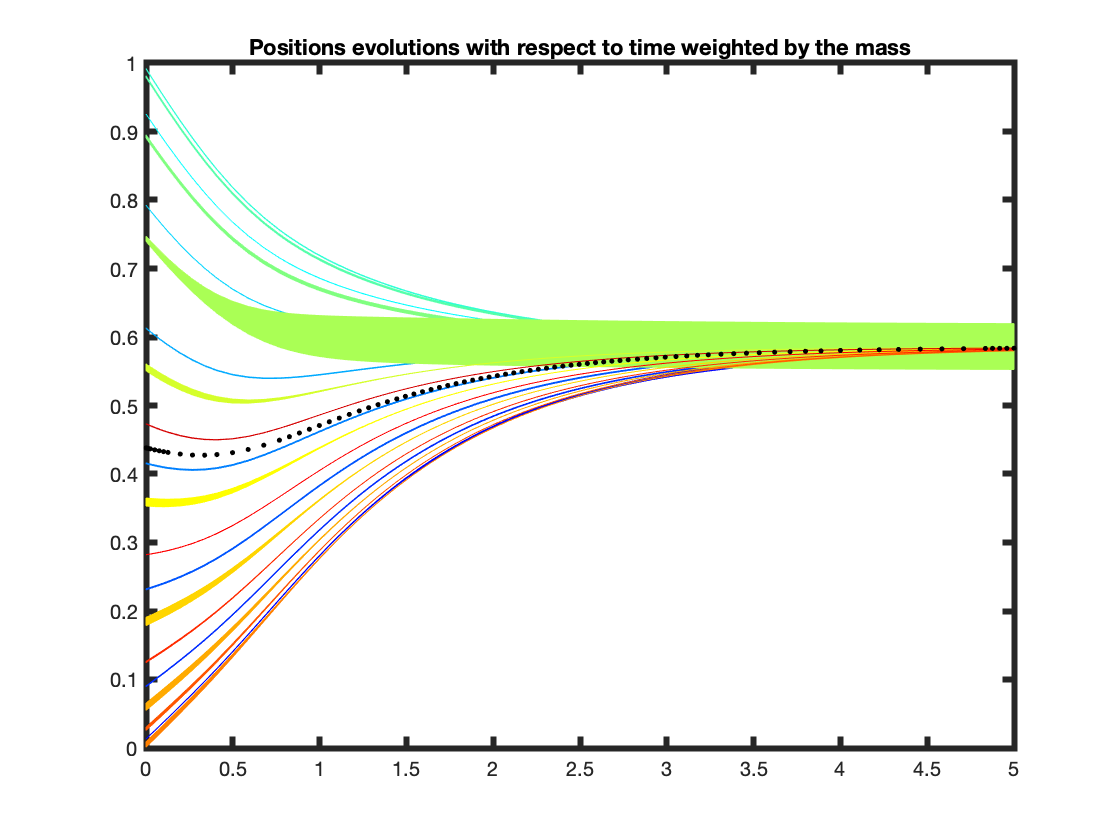}
\includegraphics[scale=0.18]{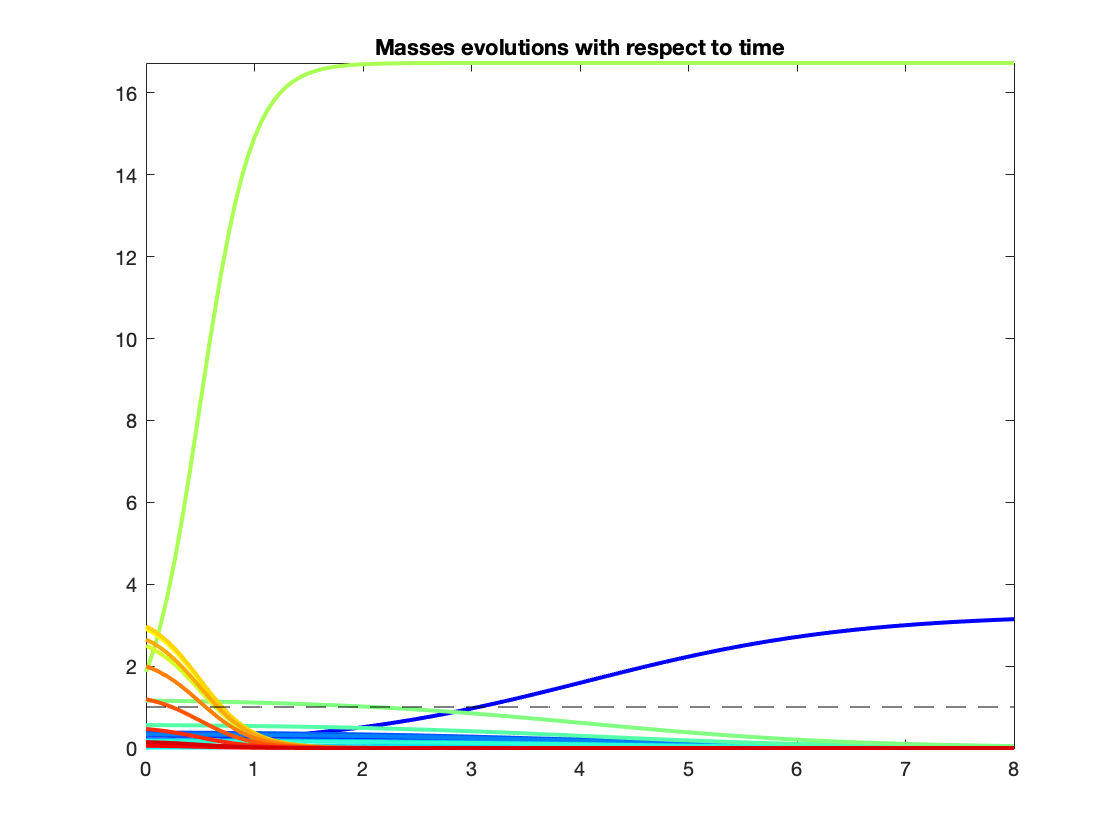}
\includegraphics[scale=0.5, clip=true]{SpaceI}
\end{center}
\caption{ Evolution of opinions (left) and weights (right) of 20 agents. Center: color scale for the index $i$ ranging from 1 (blue) to 20 (red). Parameters: $K=2$, $r=0.1$, $\beta=5$.}\label{fig:Micro_K2}
\end{figure}

\begin{figure}[!h]
    \centering
        \includegraphics[width=0.32\textwidth]{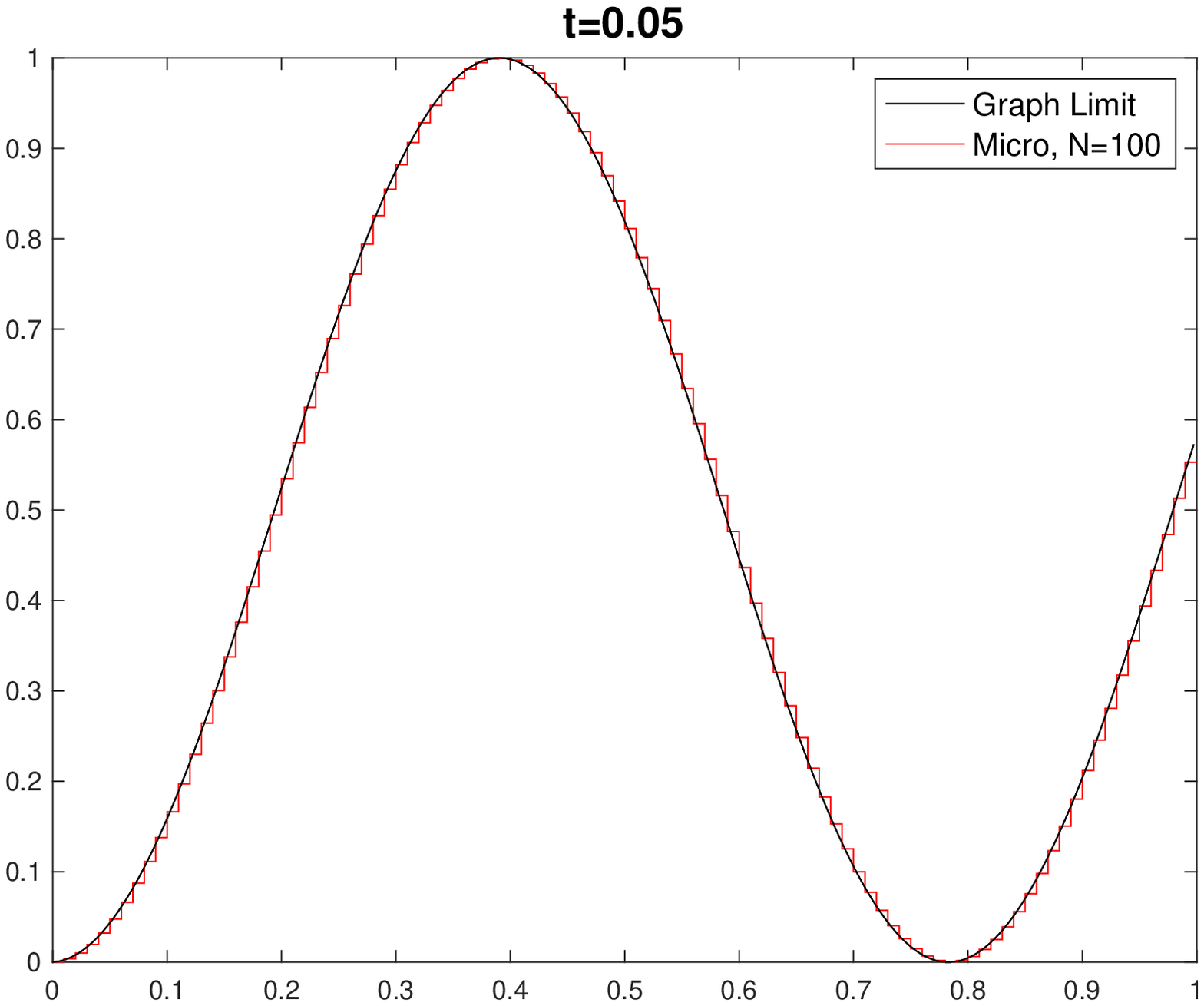}
        \includegraphics[width=0.32\textwidth]{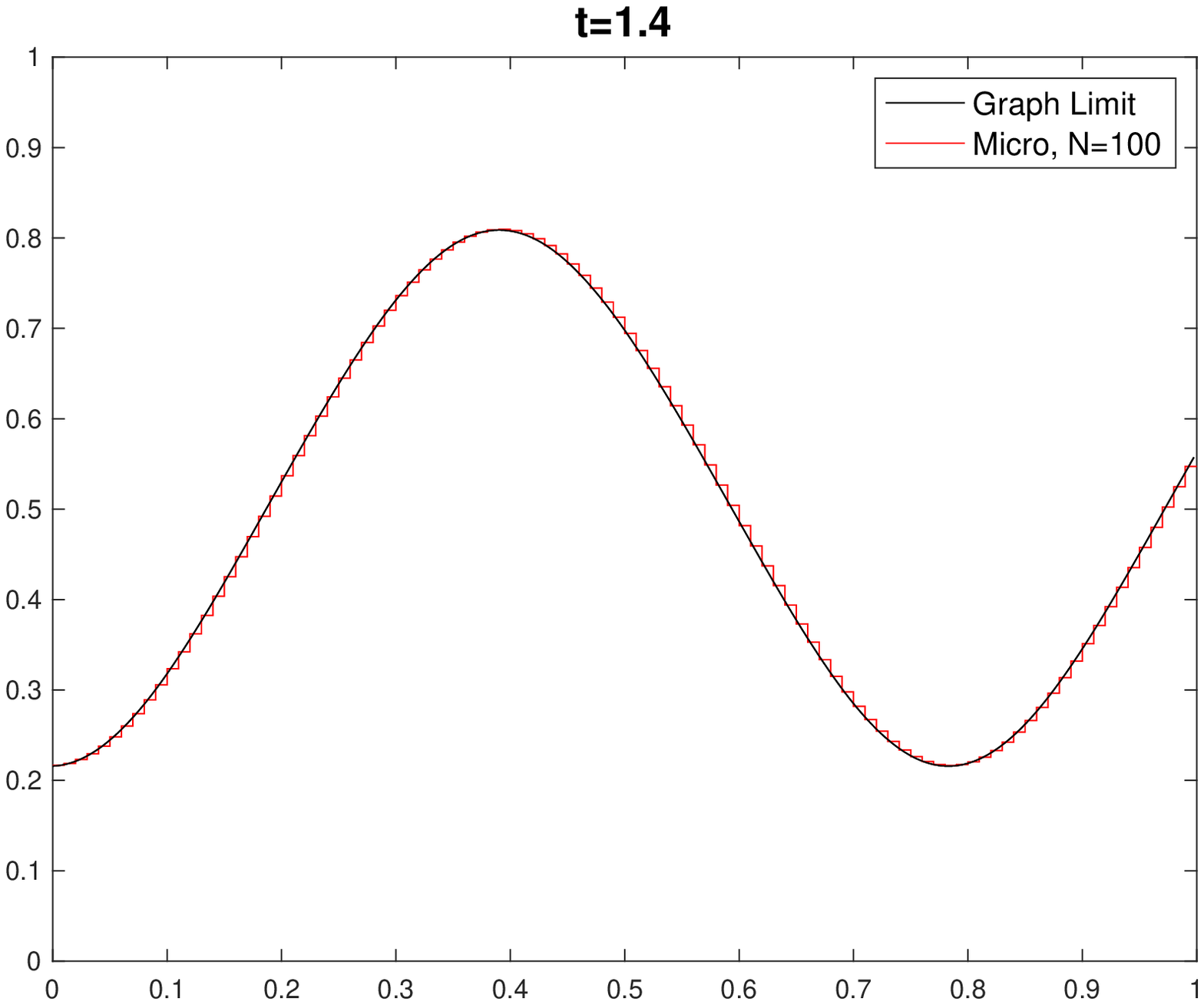}
        \includegraphics[width=0.32\textwidth]{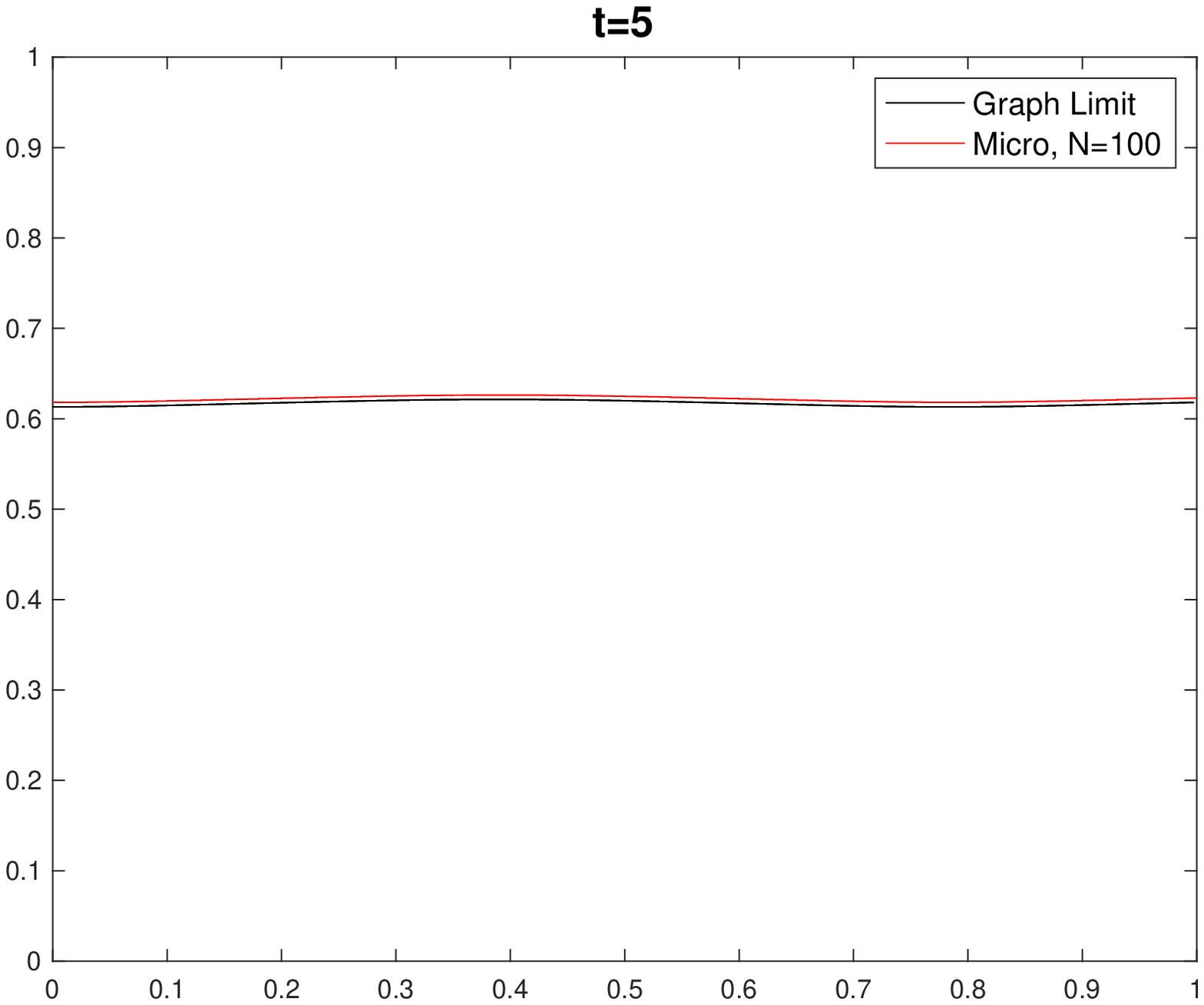} \\
        \includegraphics[width=0.32\textwidth]{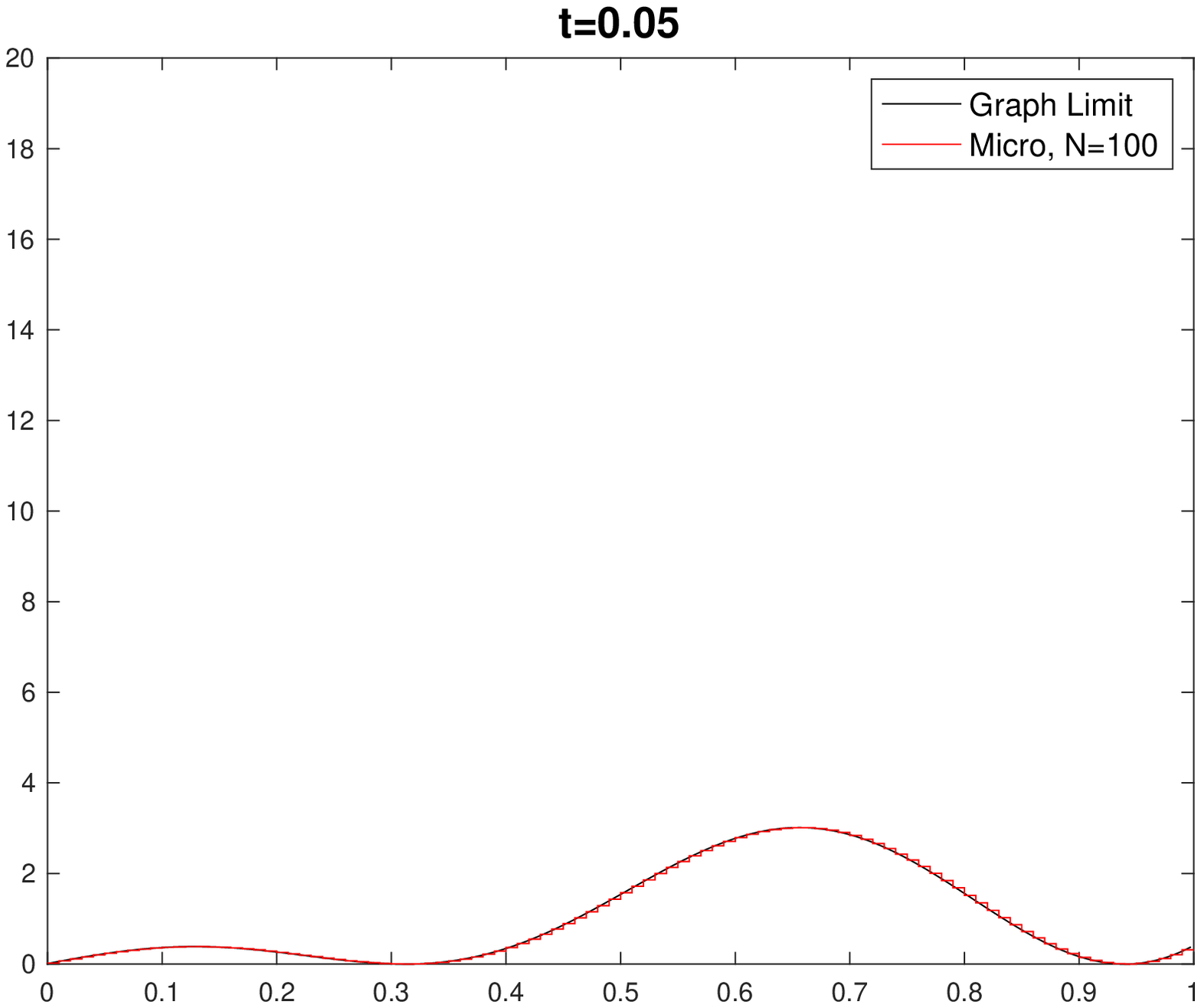}
        \includegraphics[width=0.32\textwidth]{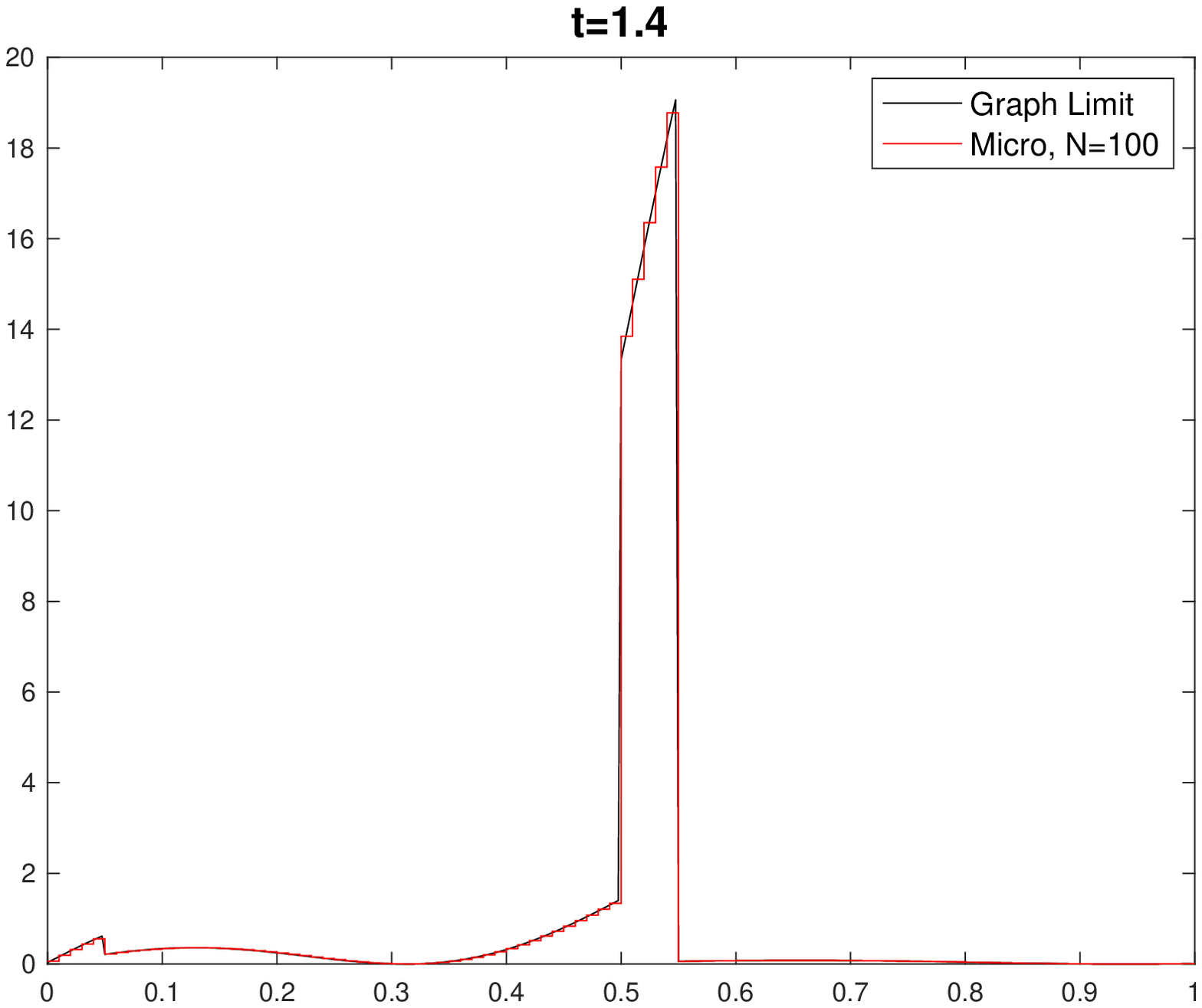}
        \includegraphics[width=0.32\textwidth]{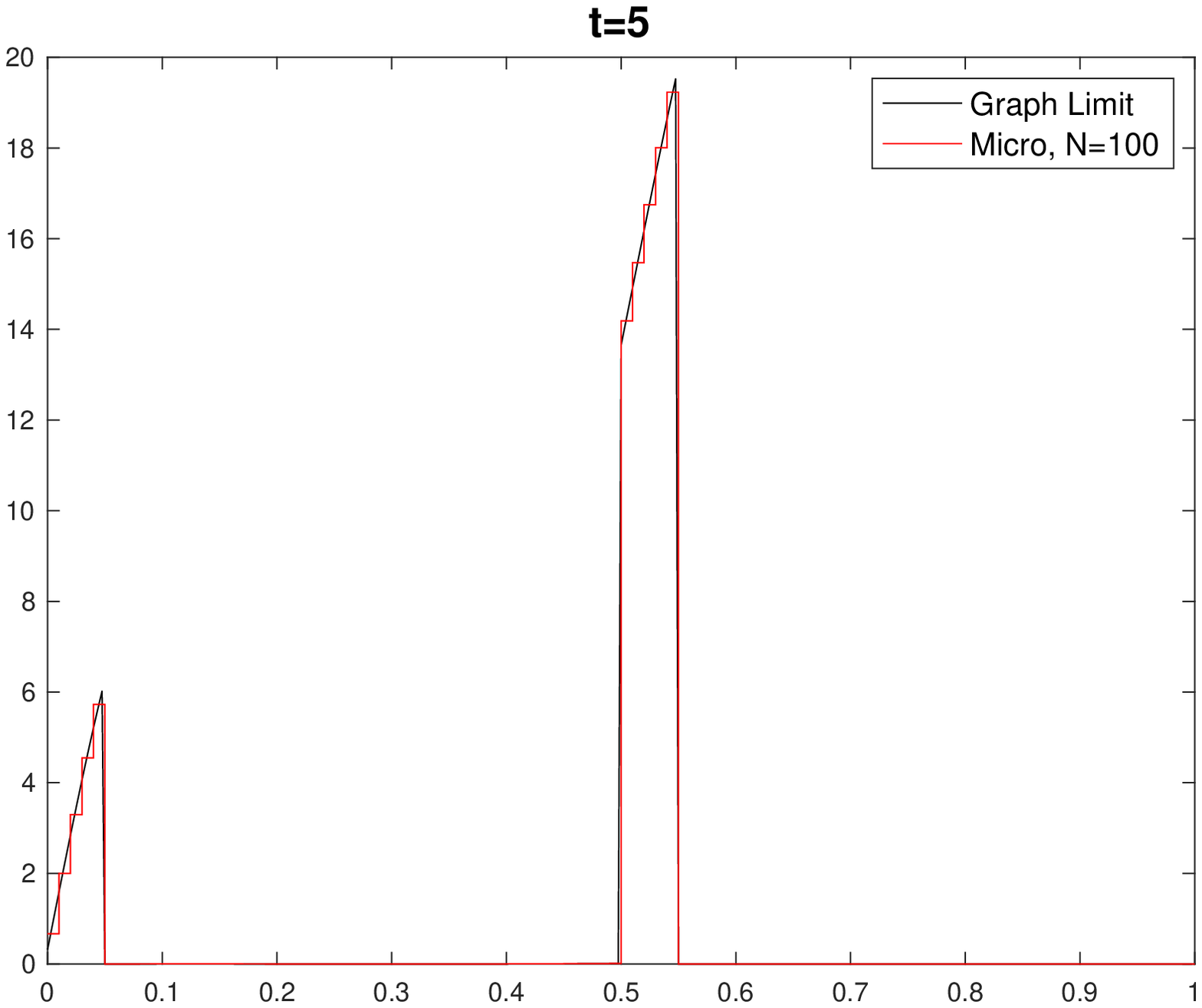} \\    
    \caption{\label{fig:microGL3} Representation of the solutions $s\mapsto x_N(t,s)$ (top) and $s \mapsto m_N(t,s)$ (bottom) to the microscopic dynamics \eqref{eq:syst-gen}-\eqref{eq:modelsimumicro2} and of the solutions $s\mapsto x(t,s)$ (top) and $s\mapsto m(t,s)$ (bottom) to the Graph Limit equation \eqref{eq:GraphLimit-gen}-\eqref{eq:modelsimuGL2} for $t=0.05$, $t=1.4$ and $t=5$. Parameters: $K = 2$, $r = 0.1$, $\beta = 5$.}
\end{figure}

\subsection{Dynamics preserving indistinguishability} \label{sec:numericindisting}

\subsubsection{The model}

In this first series of simulations, we will consider a particular case of mass dynamics of the form \eqref{eq:psiSk}. Recall that as shown in Section \ref{sec:indisting}, all mass dynamics of this form preserve indistinguishability, thus we can study the two limits - graph and mean-field - of the microscopic model. 
More precisely, let us focus on the following  model associated with the weight dynamics 
\begin{equation}\label{eq:modelsimumicro}
\psi_i(x,m) = \frac{1}{N} m_i \left(  \frac{1}{N} \sum_{k=1}^N \sumj m_k m_j \| \phi(x_l-x_j)\| - \sumj m_j \| \phi(x_i-x_j)\|\right).
\end{equation} 
Let us explain its origin. We denote by $e_{j \to i}$ the influence of $j$ on $i$ and define it as $e_{j \to i} = m_j \phi(x_i-x_j)$. Let $e_i$ represent the total group influence on $i$, defined as $$e_i=\sum_{j=1}^N e_{j \to i} = \sum_{j=1}^N m_j \|\phi(x_i-x_j)\|.$$ 
Now denoting by $\overline{e}$ the weighted average of the total group influence $$\overline{e} = \sum_{k=1}^N \frac{m_k}{N} e_k = \sum_{k=1}^N \sum_{j=1}^N \frac{m_k}{N} m_j \|\phi(x_k-x_j)\|,$$ 
the mass dynamics of model \eqref{eq:modelsimumicro} can be rewritten as: 
\[
\psi_i(x,m) = \frac{1}{N} m_i \left( \overline{e} - e_i \right).
\]
Thus, in our model, if the group influence on $i$ is lower than the weighted average of the group influence among the population (i.e. agent $i$ is being less influenced than average), $i$ gains weight proportionally to this difference and to its own weight $m_i$.
On the other hand, if the group influence on $i$ is higher than the weighted average among the population (i.e. agent $i$ is more influenced than average), $i$ loses weight proportionally to this difference and to its own weight $m_i$.
In other words, in this model, the less influenced agents gain weight, thus becoming the more influential. 

The Graph Limit of the microscopic model \eqref{eq:syst-gen}-\eqref{eq:modelsimumicro} is given by \eqref{eq:GraphLimit-gen}, with 
\begin{equation}\label{eq:modelsimuGL}
\psi(s,x,m) = m(s) \left( \int_I \int_I  m(s_*) m(\tilde{s}) \| \phi(x(\tilde s)-x(s_*))\|\, ds_*\, d\tilde{s} - \int_I m(s_*) \| \phi(x(s)-x(s_*))\|\, ds_* \right).
\end{equation}

As seen in Section \ref{sec:mfl}, the mean-field limit of \eqref{eq:syst-gen}-\eqref{eq:modelsimumicro} is given by the transport equation with source \eqref{eq:mfl}, with
\begin{equation}\label{eq:modelsimumacro}
h[\mu](x) = \left(\int_{\R^d} \int_{\R^d} \|\phi(y-z)\| d\mu(z) d\mu(y) - \int_{\R^d} \|\phi(y-x)\| d\mu(y)\right) \mu(x).
\end{equation}

\subsubsection{Numerical results}

For the simplicity of numerical simulations, we take $d=1$ and
we choose an interaction function compactly supported on $[0,R]$. Let $a\in \mathcal{C}(\R^+, \R)$ be defined by $a:\delta \mapsto a(\delta) =  \frac{1}{\delta} \sin^2( \frac{\pi}{R}\delta ) \, \one_{(0,R)}(\delta)$,
so that for all $x\in\R$, 
\[
\ba(x) = a(|x|) x = 
\begin{cases}
 \frac{x}{|x|} \sin^2( \frac{\pi}{R}|x| )\quad \text{ for all } x\in (-R,R)\setminus 0 \\
 0\quad \text{ otherwise}.
 \end{cases}
\]

Initial conditions are given by the functions $s\mapsto x^0(s)$ and $s\mapsto m^0(s)$ defined by 
\begin{equation}\label{eq:ICsimuGL}
\begin{cases}
x_0(s) = \frac{1}{\pi}\arccos(2s-1) \\
m_0(s) = \tilde{m}^0(s) (\int_I \tilde{m}_0(s_*) ds_*)^{-1}
\end{cases}
\end{equation}
where $\tilde{m}_0(s) =  s^{1/4}\cos^2(5s)+ 0.2 s^2+0.5$.
Graphical representations of $x_0$ and $m_0$ can be found in the left panels of Figure \ref{fig:microGL}. 
Notice that all opinions are initially in the interval $[0,1]$, thus if $R\geq 1$, all agents interact with all others, and we expect consensus. From here onward, we choose $R=0.2$.

We begin by showing numerical simulations of the microscopic model \eqref{eq:syst-gen}-\eqref{eq:modelsimumicro} for $N=30$. Initial conditions were computed from \eqref{eq:ICsimuGL}-\eqref{eq:ICx}.
The left panel of Figure \ref{fig:micro_30} shows the time evolution of the opinions $t\mapsto x_i(t)$ in which the (time-dependent) thickness of the lines is proportional to the corresponding weights $t\mapsto m_i(t)$, whose evolution is shown in the right panel.
Notice that due to the compact support of the interaction function, the population divides into three clusters separated by distances greater than $R$, the interaction radius.
Although the weights initially all start within the interval $[0.5,1.6]$, the weight dynamics spread the weights by leading the least influenced agents to gain mass. This can be observed in the evolution of $m_{30}$ (represented in light green), which feels little group influence since $x_{30}$ is at the lower edge of the group. Likewise, $m_1$ (in red) initially increases since $x_1$ is at the upper edge of the group, but after $x_1$ joins its closest neighbors, the group influence that it feels increases and $m_1$ decreases.
Observe also that the total mass is conserved, as shown by the constant evolution of the average mass (black dotted line).

\begin{figure}[!h]
    \centering
    \begin{subfigure}[b]{0.45\textwidth}
        \includegraphics[width=\textwidth]{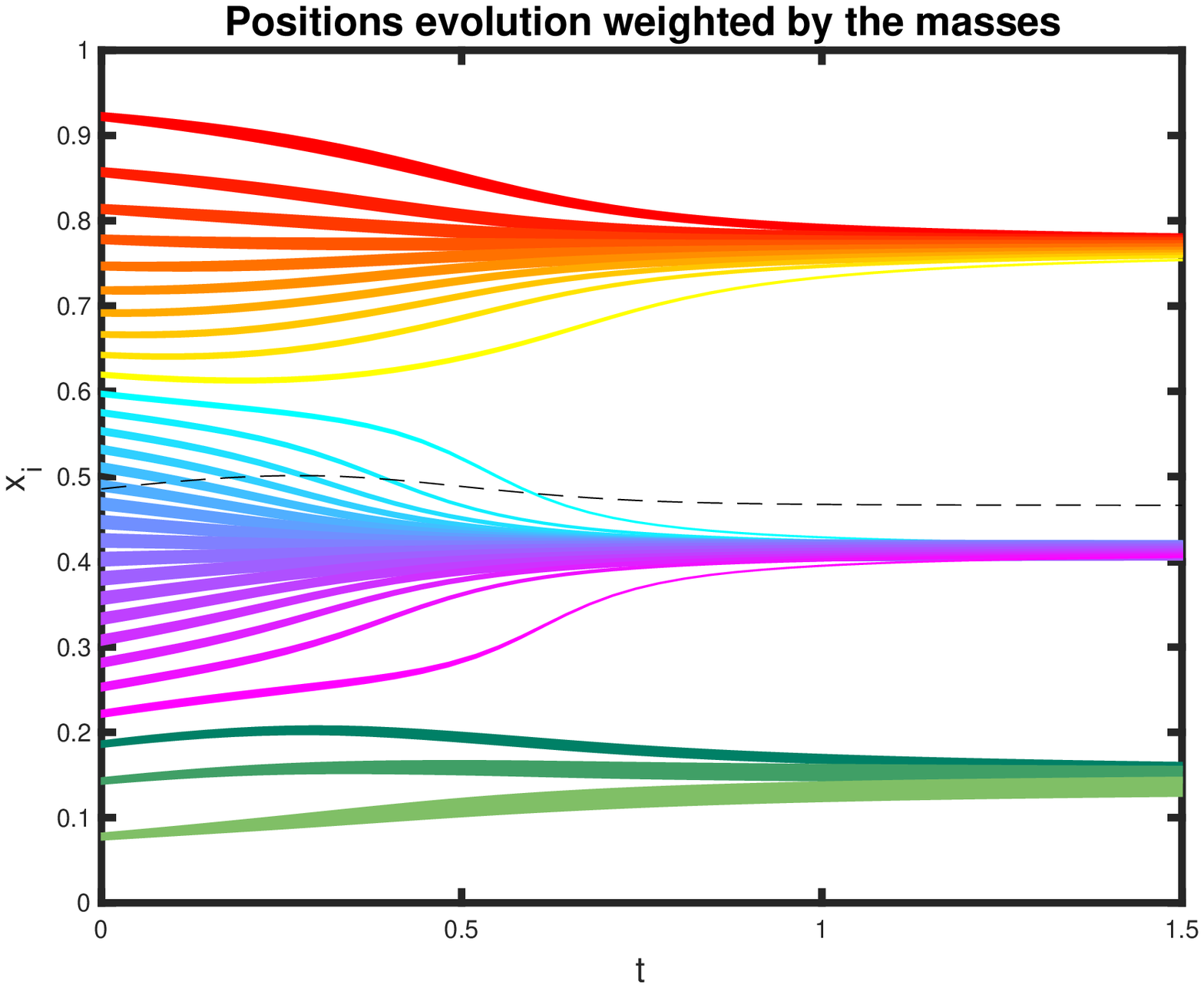}
    \end{subfigure}
    \begin{subfigure}[b]{0.45\textwidth}
        \includegraphics[width=\textwidth]{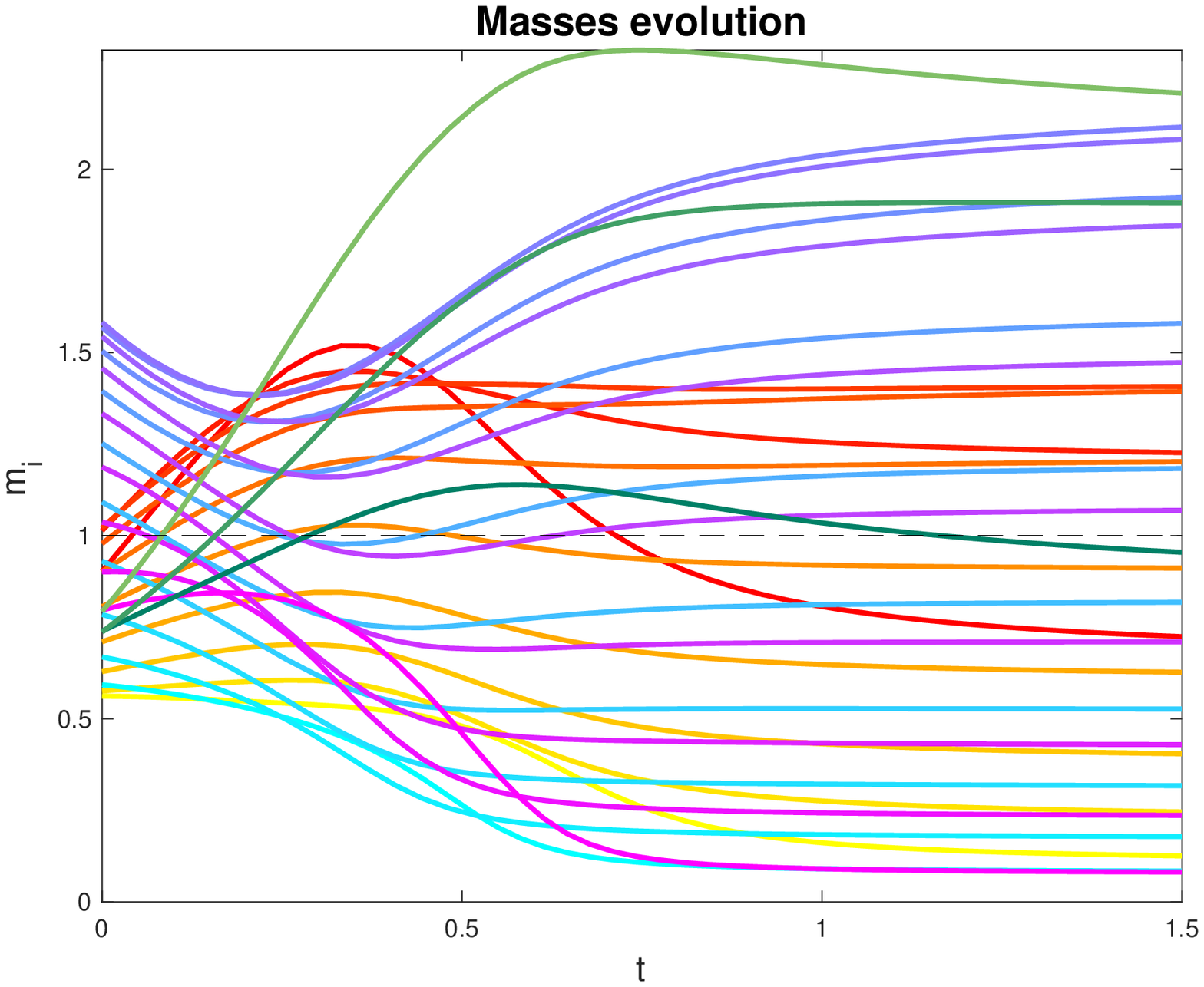}
    \end{subfigure}
    \caption{\label{fig:micro_30} Evolution of opinions (left) and weights (right) for the microscopic model \eqref{eq:syst-gen}-\eqref{eq:modelsimumicro} with $N=30$ agents. The weighted barycenter $\bar x = \frac{1}{N}\sum_{i=1}^N m_i x_i$ and the average mass $\bar m = \frac{1}{N}\sum_{i=1}^N m_i$ are represented by black dotted lines. In the left panel, the thickness of the curve representing $x_i(t)$ is proportional to the corresponding weight $m_i(t)$.}
\end{figure}

Figure \ref{fig:microGL} shows the profile of the mean-field limit $s\mapsto x(t,s)$ and $s\mapsto m(t,s)$ solving \eqref{eq:GraphLimit-gen}-\eqref{eq:modelsimuGL} with initial conditions given by \eqref{eq:ICsimuGL} at times $t=0$, $t=0.45$ and $t=1.5$ (black line). For comparison purposes, the solutions $s\mapsto x^N(t,s)$ and $s\mapsto m^N(t,s)$ to the microscopic model \eqref{eq:syst-gen}-\eqref{eq:modelsimumicro} for $N=50$ is plotted on the same figures, using the representation via step functions given by \eqref{eq:xN} (red line). 
The clustering behavior is now shown by the convergence of $x$ and $x^N$ to a step function taking three distinct values.
Notice that the weight function converges to a function with three local maxima attained at the centers of the three clusters, while the agents at each cluster's edge form local minima.

\begin{figure}[!h]
    \centering
        \includegraphics[width=0.3\textwidth]{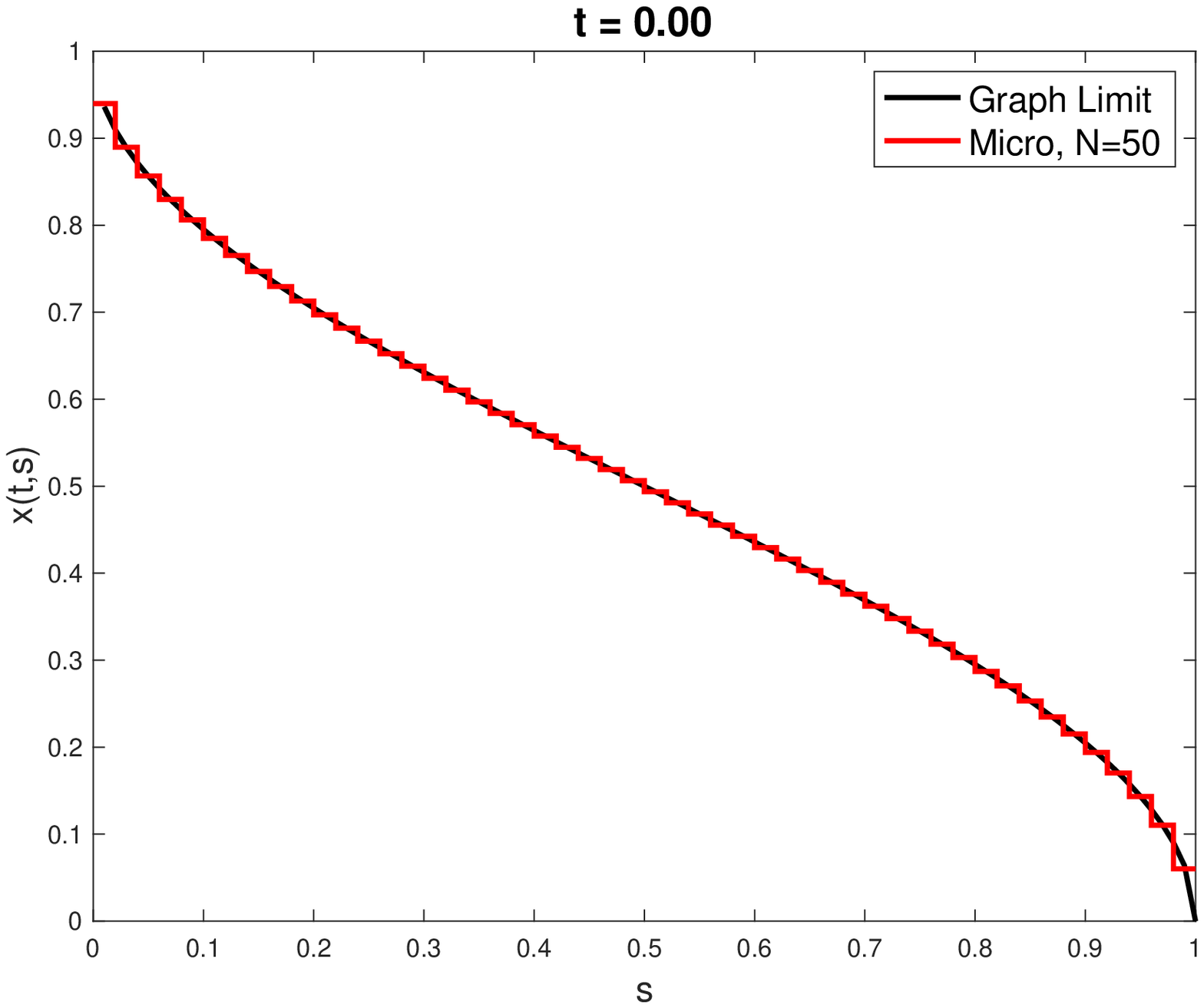}
        \includegraphics[width=0.3\textwidth]{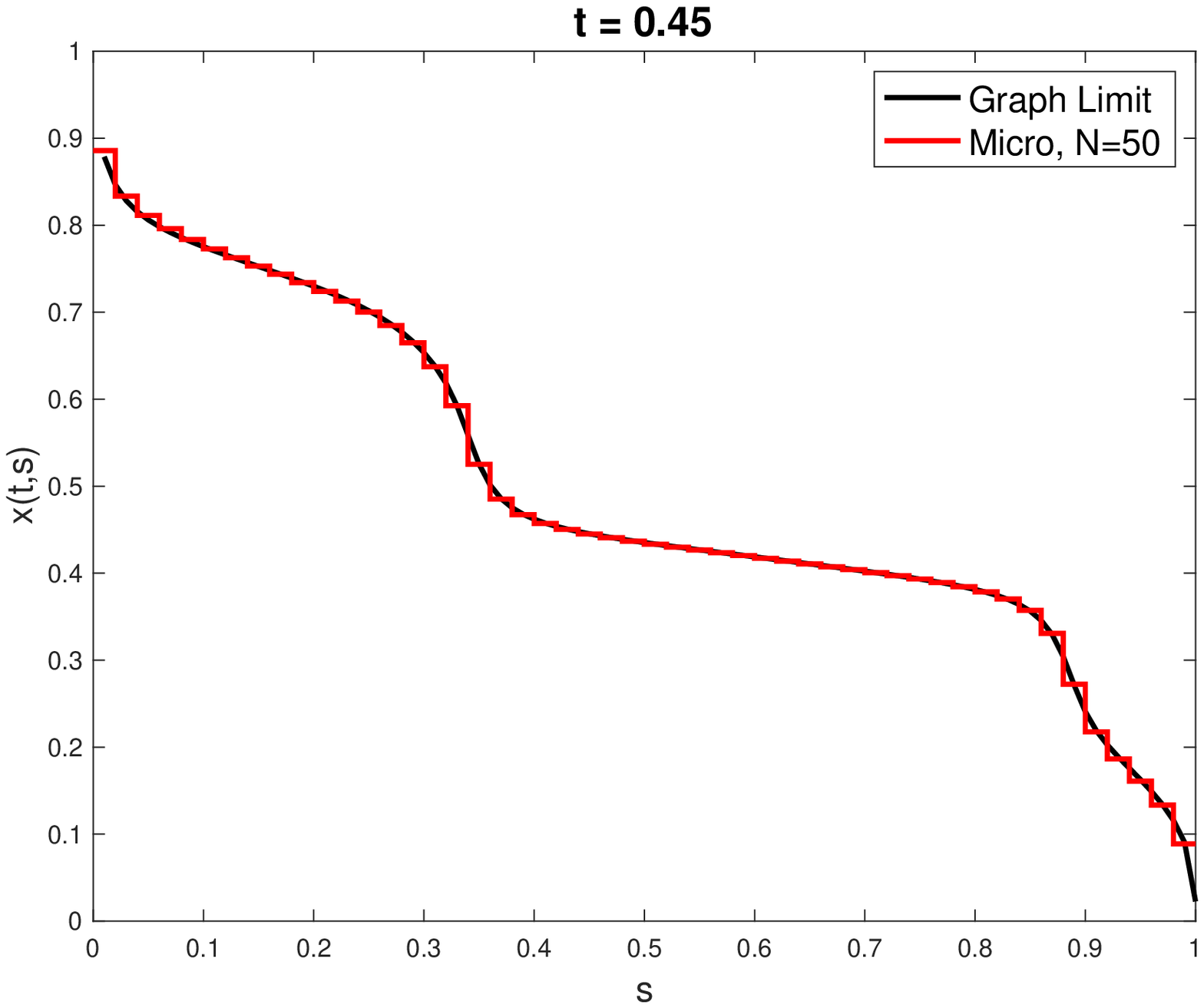}
        \includegraphics[width=0.3\textwidth]{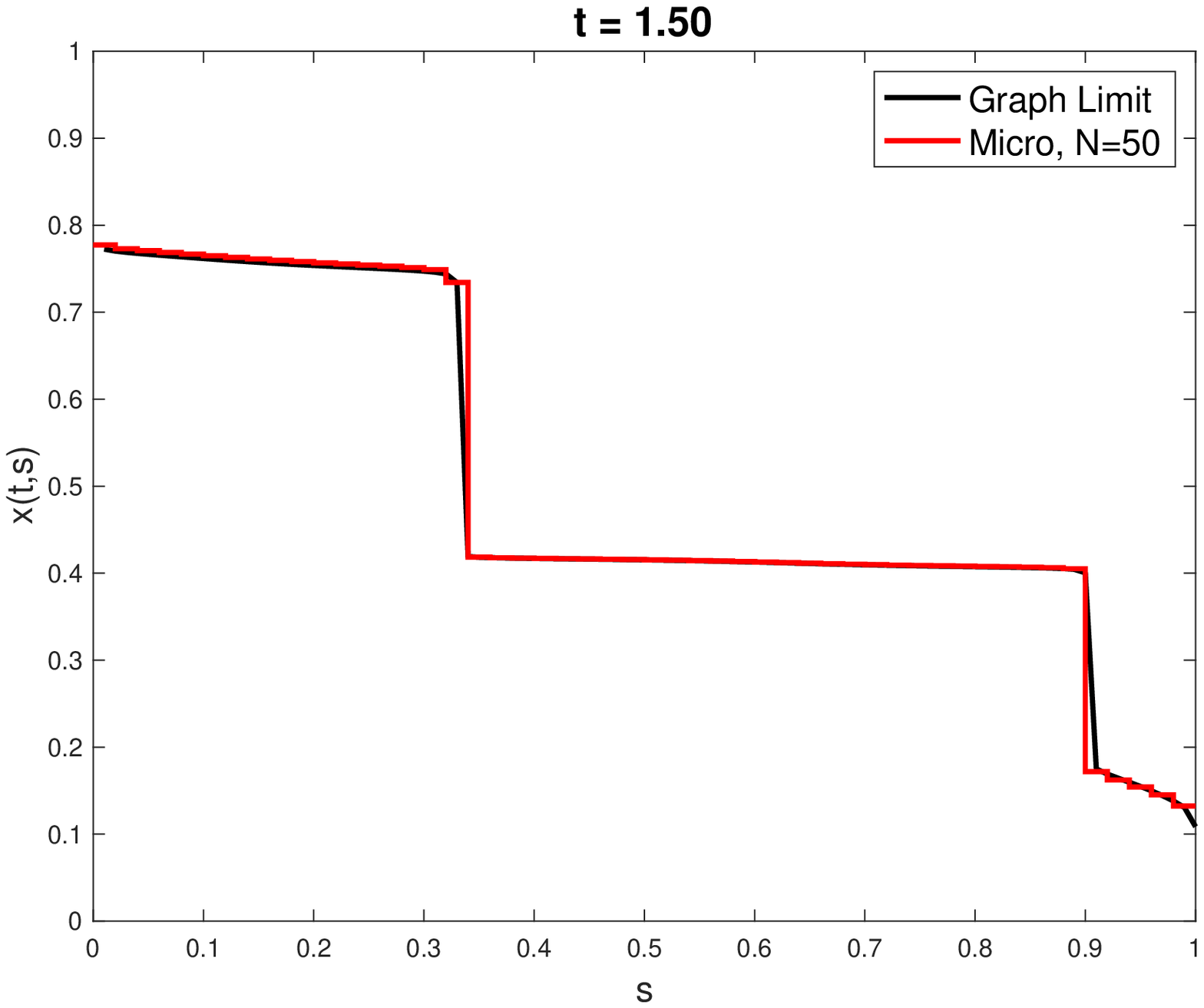} \\
        \includegraphics[width=0.3\textwidth]{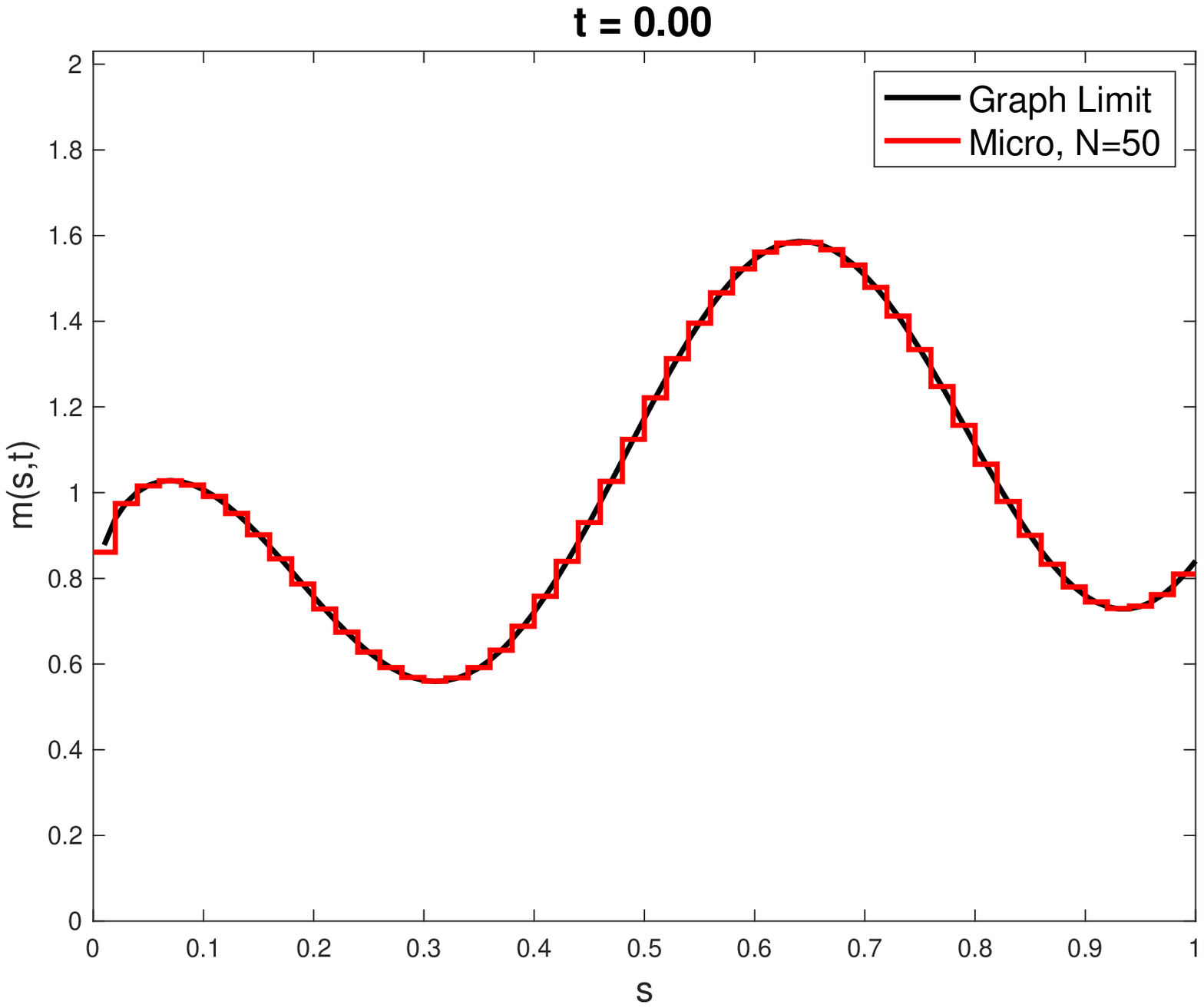}
        \includegraphics[width=0.3\textwidth]{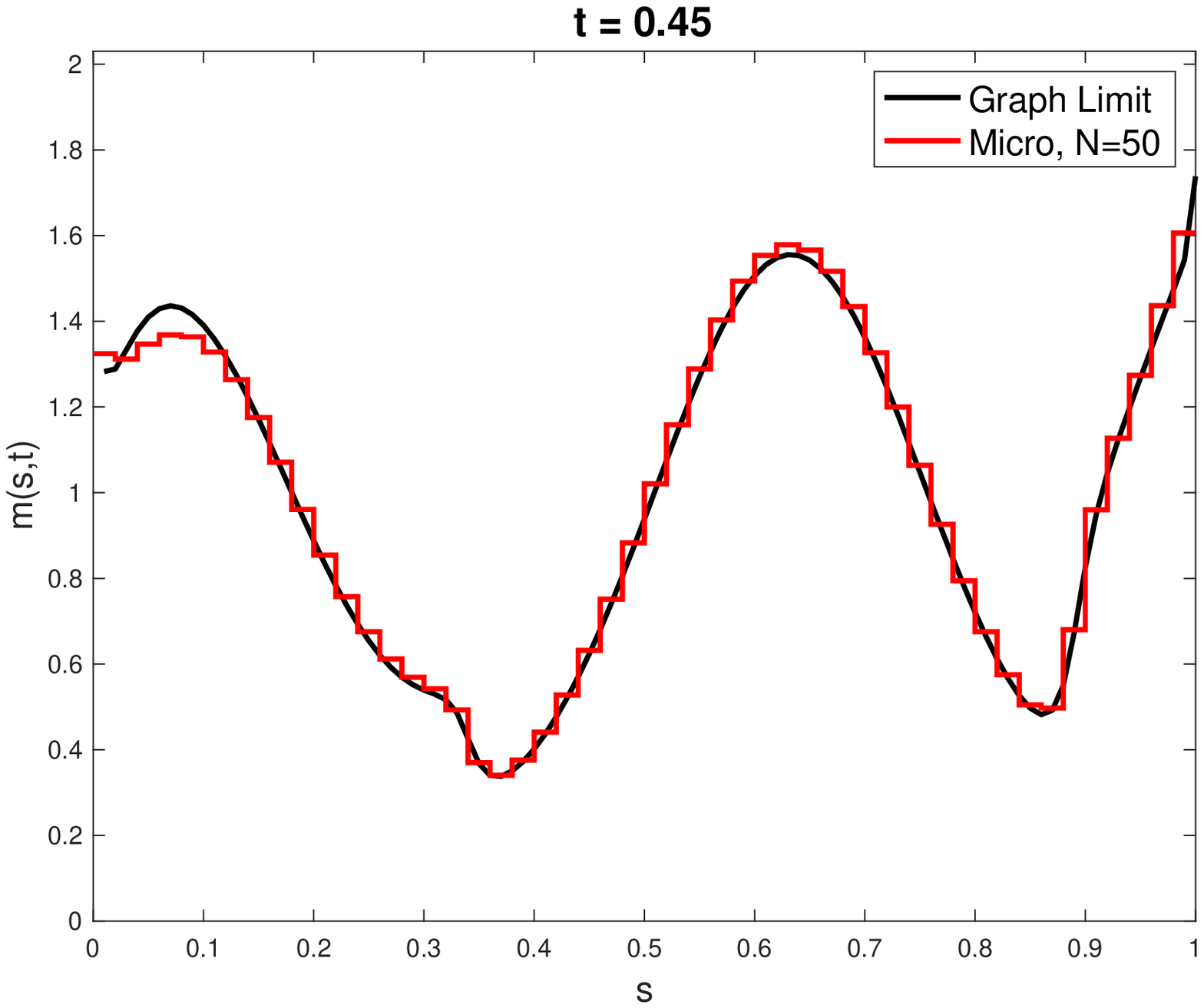}
        \includegraphics[width=0.3\textwidth]{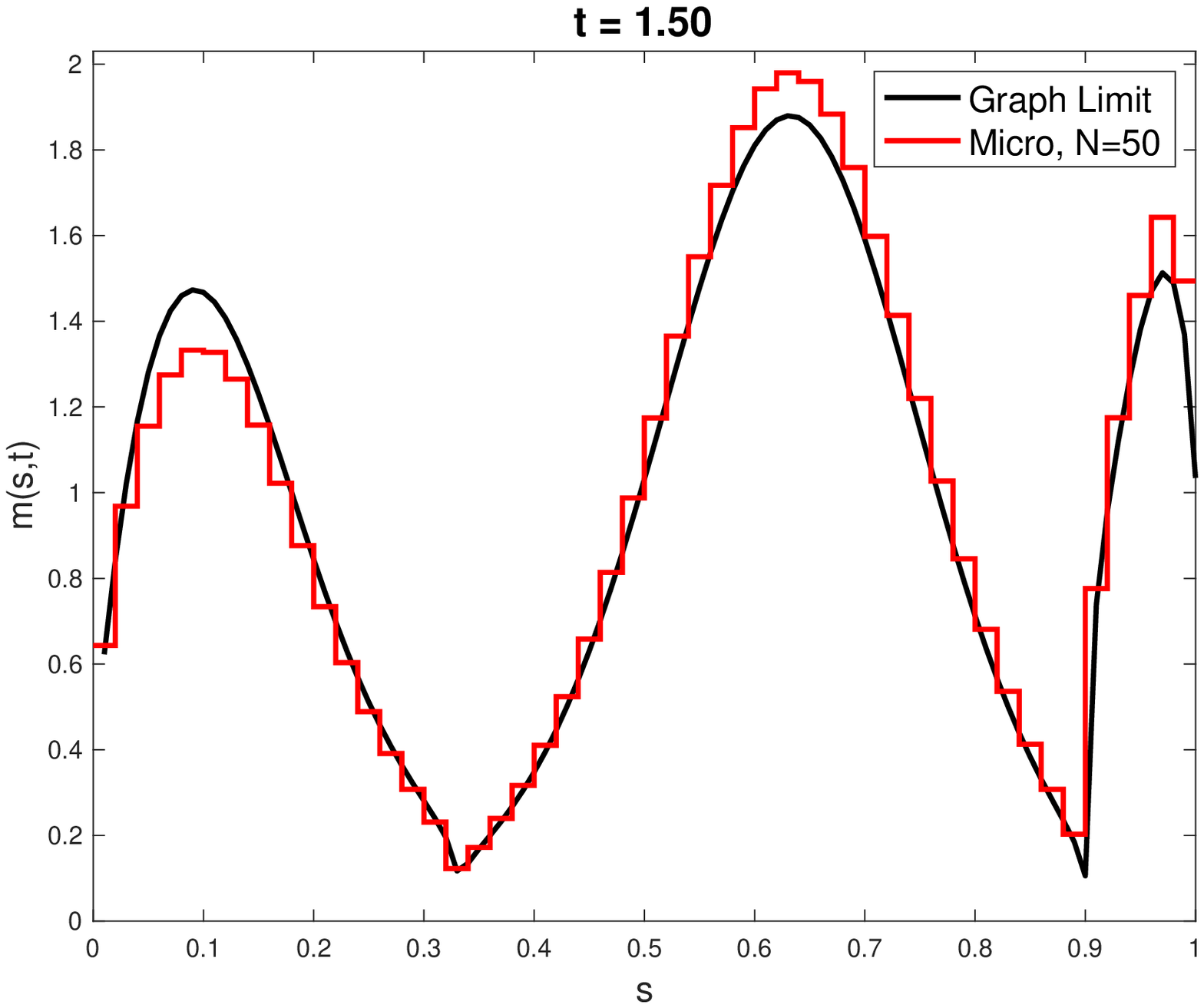} \\    
    \caption{\label{fig:microGL} Representation of the solutions $s\mapsto x_N(t,s)$ (top) and $s \mapsto m_N(t,s)$ (bottom) to the microscopic dynamics \eqref{eq:syst-gen}-\eqref{eq:modelsimumicro} and of the solutions $s\mapsto x(t,s)$ (top) and $s\mapsto m(t,s)$ (bottom) to the Graph Limit equation \eqref{eq:GraphLimit-gen}-\eqref{eq:modelsimuGL} for $t=0$, $t=0.45$ and $t=1.5$.}
\end{figure}

Figure \ref{fig:microGLmacro} represents the solution $x\mapsto \mu_t(x)$ to the mean-field equation \eqref{eq:mfl}-\eqref{eq:modelsimumacro} with initial condition given by
\[
\mu_0(x) = \int_I m_0(s) \delta( x-x_0(s)) ds 
\]

 at times $t=0$, $t=0.45$ and $t=1.5$. Again we observe convergence of the population to three clusters, as the measure converges to three Dirac masses located at the centers of mass of the clusters. As in the previous two representations, convergence to the left-most cluster is slower than convergence to the center and right clusters.
For comparison, the solution to the microscopic model \eqref{eq:syst-gen}-\eqref{eq:modelsimumicro} with $N=50$ was plotted on the same figure (red line) using the step-function representation $\mu_t^{N,n}$ defined as follows:
\[
\forall j\in \{1,...n\}, \quad \forall x\in E_j, \quad \mu^{N,n}_t(x):= \sum_{i=1}^N \frac{m_i(t)}{|E_j|}\one_{E_j}(x_i(t)),
\]
where for all $j\in \{1,...n\}$, $E_j:=[\frac{j-1}{n},\frac{j}{n})$, and $n=25$.
Subordination of the mean-field limit to the graph limit is shown by representing the solution to the graph limit equation \eqref{eq:GraphLimit-gen}-\eqref{eq:modelsimuGL} 
\[
\forall x\in \R, \quad \tilde{\mu}_t(x) = \int_I m(t,s) \delta( x-x(t,s)) ds.
\]
The solution $t\mapsto (x_i(t), m_i(t))_{i\in\elts}$ to the system of ODE was computed using the Matlab solver ode45. 
The solution $(t,s)\mapsto (x(t,s),m(t,s))$ to the integro-differential equation was computed using Euler's method for time differentiation and Simpson's method for space integration.
Lastly, the solution $(t,x)\mapsto\mu(t,x)$ to the transport PDE with source was computed using a standard Lax-Wendroff
scheme.

\begin{figure}[!h]
    \centering
        \includegraphics[width=0.3\textwidth]{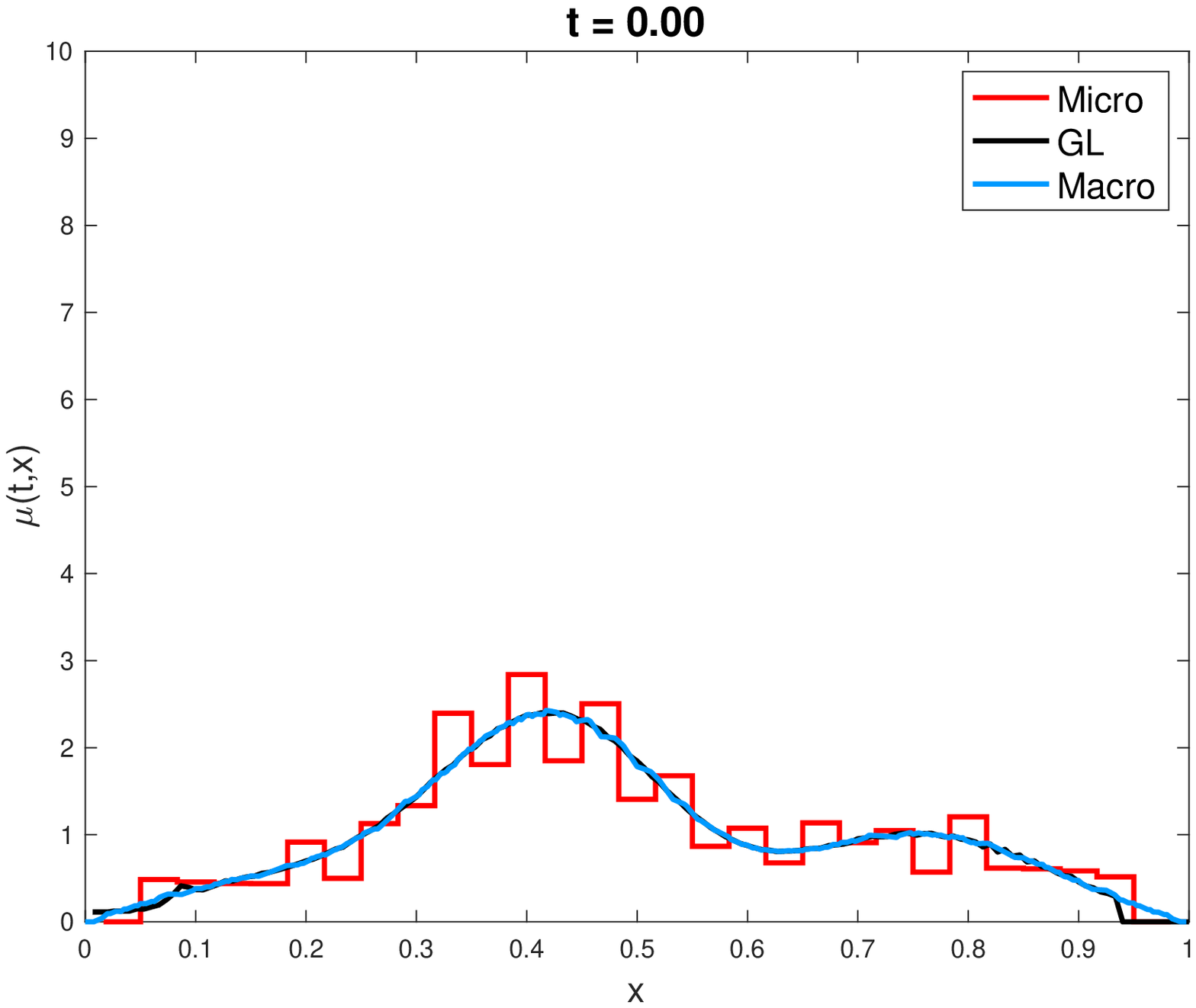}
        \includegraphics[width=0.3\textwidth]{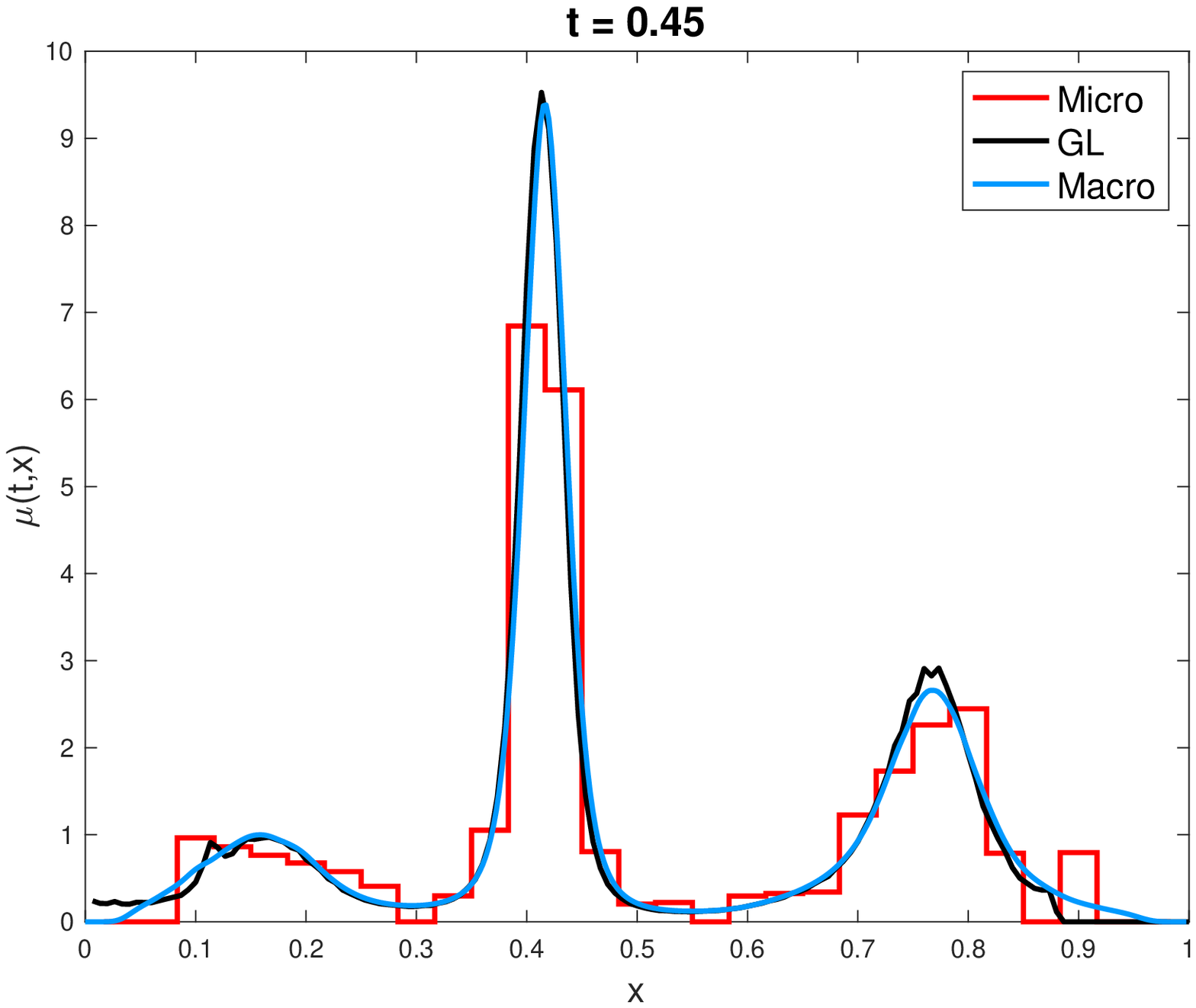}
        \includegraphics[width=0.3\textwidth]{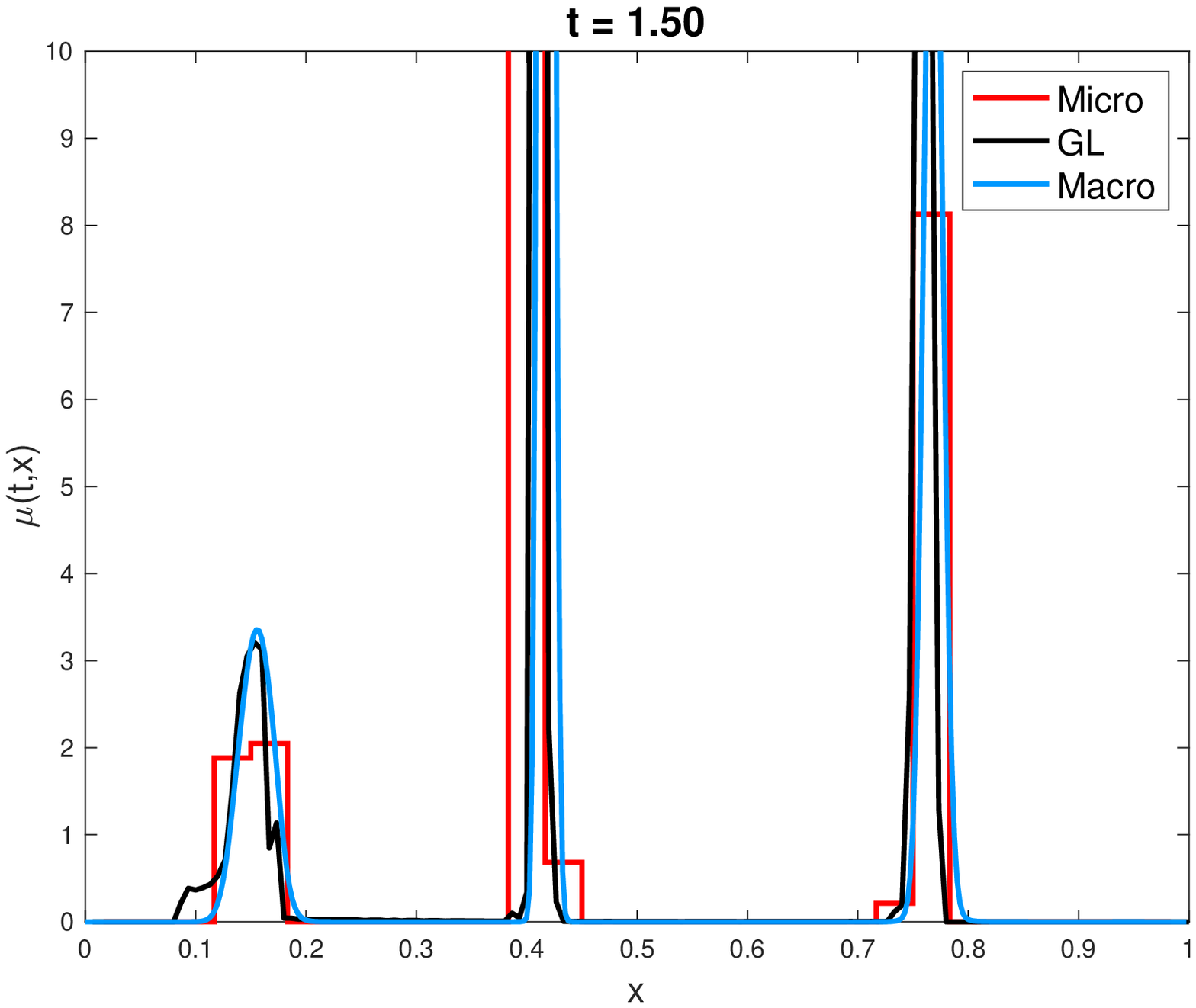} \\   
    \caption{\label{fig:microGLmacro} Representation of the solution $\mu_t^{N,n}$ (red) to the microscopic dynamics \eqref{eq:syst-gen}-\eqref{eq:modelsimumicro} for $N=50$ and $n=25$, of the solution $\tilde{\mu}_t$ (black) to the graph limit equation \eqref{eq:GraphLimit-gen}-\eqref{eq:modelsimuGL} and of the solution $\mu_t$ (blue) to the transport equation with source \eqref{eq:mfl}-\eqref{eq:modelsimumacro} at times $t=0$, $t=0.45$ and $t=1.5$.}
\end{figure}

\newpage
\bibliography{biblio}

\end{document}